\DeclareMathAlphabet{\mathpzc}{OT1}{pzc}{m}{it}
\newtheorem{thm}{Theorem}[section]
\newtheorem{lem}[thm]{Lemma}
\newtheorem{prop}[thm]{Proposition} 
\newtheorem{cor}[thm]{Corollary}
\newtheorem{rem}[thm]{Remark}
\newtheorem{ex}[thm]{Example}
\newcommand{\p}{\mathpzc{p}}
\newcommand{\q}{\mathpzc{q}}
\newcommand{\bC}{\mathbb C}
\newcommand{\A}{\mathbb A}
\newcommand{\Spec}{\operatorname{Spec}}
\newcommand{\td}{\operatorname{tr.deg}}
\newcommand{\End}{\operatorname{End}}
\newcommand{\Aut}{\operatorname{Aut}}
\newcommand{\dk}{\operatorname{DK}}
\newcommand{\ml}{\operatorname{ML}}
\title{On the triviality of a family of linear hyperplanes}
\author{Parnashree Ghosh$^*$ and Neena Gupta$^{**}$\\
	{\small{\it  Theoretical Statistics and Mathematics  Unit, Indian Statistical Institute,}}\\ 
	{\small{\it 203 B.T.Road, Kolkata-700108, India}}\\
	{\small{\it e-mail : $^*$parnashree$\_$r@isical.ac.in, ghoshparnashree@gmail.com}}\\
	{\small{\it e-mail : $^{**}$neenag@isical.ac.in, rnanina@gmail.com}}
}
\begin{document}
	
	\date{}
	\maketitle
	\abstract{Let $k$ be a field, $m$ a positive integer, $\mathbb{V}$ an affine subvariety of $\mathbb{A}^{m+3}$
		defined by a linear relation of the form $x_{1}^{r_{1}}\cdots x_{m}^{r_{m}}y=F(x_{1}, \ldots , x_{m},z,t)$,
		$A$ the coordinate ring of $\mathbb{V}$ and $G= X_1^{r_1}\cdots X_m^{r_m}Y-F(X_1, \dots, X_m,Z,T)$. In \cite{com}, 
		the second author had studied the case $m=1$ and had obtained several necessary 
		and sufficient conditions for $\mathbb{V}$ to be isomorphic to the affine 3-space and $G$ to be a coordinate
in $k[X_1, Y,Z,T]$. 

In this paper, we study the general higher-dimensional
		variety $\mathbb{V}$ for each $m \geqslant 1$ and
		obtain analogous conditions for $\mathbb{V}$ to be isomorphic to $\mathbb{A}^{m+2}$ and $G$ to be a coordinate
in $k[X_1, \dots, X_m, Y,Z,T]$, under a certain hypothesis on $F$. Our main theorem immediately yields a family of 
		higher-dimensional linear hyperplanes for which the Abhyankar-Sathaye Conjecture holds. 
	
		We also describe the isomorphism classes and automorphisms of integral domains of the type $A$ under certain conditions. 
		These results show that for each $d \geqslant 3$, there is a family of infinitely many pairwise
		non-isomorphic rings which are counterexamples to the Zariski Cancellation Problem 
		for dimension $d$ in positive characteristic.

	\smallskip
	
	\noindent
	{\small {{\bf Keywords}. Polynomial ring, Coordinate, Graded ring, Automorphism, Derksen invariant, Makar-Limanov invariant.}}
		
	\noindent
	{\small {{\bf 2020 MSC}. Primary: 14R10; Secondary: 13B25, 13A50, 13A02}}

	\section{Introduction}

Throughout this paper, $k$ will denote a field and for a commutative ring $R$, 
$R^{[n]}$ will denote a polynomial ring in $n$ variables over $R$. 
$R^{*}$ will denote the group of units in $R$.

We begin with the Epimorphism Problem, one of the fundamental problems in the area of Affine Algebraic Geometry (cf. \cite{DG}, \cite{icm}):

	\medskip
	\noindent
	\textbf{Question 1.} If $\frac{k[X_{1},\ldots,X_{n}]}{(H)}=k^{[n-1]}$, then is $k[X_{1},\ldots,X_{n}]=k[H]^{[n-1]}$?	 
    
	\medskip

The famous Epimorphism Theorem of S.S. Abhyankar and T. Moh (\cite{AM}), also proved independently by M. Suzuki (\cite{Suz}), asserts an affirmative answer to Question 1 when $k$ is a field of characteristic zero and $n=2$. The Abhyankar-Sathaye Conjecture asserts an affirmative answer to Question 1 when $k$ is of
characteristic zero and $n>2$; and this remains a formidable open problem in Affine Algebraic Geometry.

Recall that, when $k$ is a field of positive characteristic,  explicit counterexamples to Question 1 had already been demonstrated by B. Segre (\cite{Se}) in 1957 and  M. Nagata (\cite{Na}) in 1971. However, when the hyperplane $H$ is of some specified form,  
it is possible to obtain affirmative answers to Question 1 even when $k$ is of arbitrary characteristic. Thus, the Abhyankar-Sathaye Conjecture
could be extended to fields of arbitrary characteristic for certain special cases of $H$.

The first (partial) affirmative solution to Question 1 was obtained for the case $n=3$ and $H$ a linear plane, 
i.e., linear in one of the three variables, by A. Sathaye (\cite{sp}) in characteristic zero and  P. Russell (\cite{rp}) in arbitrary characteristic.
They also proved that if $A=k^{[2]}$ and the hyperplane $H \in A[Y](=k^{[3]})$ is of the form $aY+b$, where $a,b \in A$, then the coordinates $X, Z$ of
$A$ can be chosen such that $A=k[X,Z]$ with $a \in k[X]$; that is,  linear planes were shown to be of the form $a(X)Y+b(X,Z)$.    

For the case $n=4$ and $k=\mathbb C$,  S. Kaliman et.al. (\cite{kvz}) proved the Abhyankar-Sathaye Conjecture  for certain linear hyperplanes 
in ${\mathbb C} [X_1, X_2, Y, Z]$ of the type $a(X_1)Y+b(X_1,X_2, Z)$ and $a(X_1, X_2)Y+b(X_1,X_2, Z)$ under certain hypotheses. 
A general survey on other partial affirmative answers to the Abhyankar-Sathaye Conjecture has been made in \cite[Section 2]{DG}. 

In this paper we consider certain types of linear hyperplanes in higher dimensions which arose out of investigations on the Zariski Cancellation Problem 
in \cite{adv}. We present the genesis below.

Consider a ring of the form
	\begin{equation}\label{A}
		A:= \frac{k\left[ X_{1}, \ldots , X_{m}, Y,Z,T\right]}
{\left(X_{1}^{r_{1}} \cdots X_{m}^{r_{m}}Y -F(X_{1}, \ldots , X_{m}, Z,T) \right)}, 
		~~r_{i} >1  \text{~for all~}i, 1 \leqslant i \leqslant m,
	\end{equation}
		where $F(0,\ldots,0,Z,T) \neq 0$. Set $f(Z,T):=F(0, \ldots, 0, Z,T)$. Let $x_1,\ldots,x_m,y,z,t$ denote the images in $A$ of $X_1,\ldots,X_m,Y,Z,T$ respectively.

Now suppose $F=f(Z,T)$, where $f$ is a {\it line} in $k[Z,T]$, i.e.,  $\frac{k[Z,T]}{\left( f \right)} = k^{[1]}$. 
	For each integer $m \geqslant 1$, the second author established the following two properties of the integral domain $A$ under the above hypothesis \cite[Theorem 3.7]{adv}:
	 
	 \smallskip
	 \noindent
	  (a) $A^{[1]}=k^{[m+3]}$.

	 \smallskip
	\noindent
	  (b) If $A=k^{[m+2]}$,
	   then $k[Z,T]=k[f]^{[1]}$. 
	 
	 \smallskip
	 
Now assume further that $ch \,k:=p >0$. Let $f(Z,T)$ be a Segre-Nagata 
{\it non-trivial line} in $k[Z,T]$ (i.e., ${k[Z,T]} \neq k[f]^{[1]}$), 
for instance, consider $f(Z,T)=Z^{p^{e}}+T+T^{sp}$, where
	 $p^{e} \nmid sp$ and $sp \nmid p^{e}$. From (a) and (b) it 
follows that the corresponding ring $A$ is stably isomorphic to $k^{[m+2]}$ but not isomorphic to $k^{[m+2]}$.
Therefore, for each $m \geqslant 1$, such a ring $A$ is a counterexample to the Zariski Cancellation
 Problem in positive characteristic in each dimension greater than or equal to three \cite[Corollary 3.8]{adv}.

	  Earlier, the second author had investigated the ring $A$ for $m=1$ and had shown that the 
condition (b) holds (i.e., $A= k^{[3]}$ implies $k[Z,T]=k[f]^{[1]}$)
even without the hypothesis that $f(Z,T)$ is a line in $k[Z,T]$. In fact, she had proved the following general result \cite[Theorem 3.11]{com}:
	
	\medskip
	
	\noindent
	{\bf Theorem A.}
	{\it	Let $k$ be a field and $A^{\prime}=\frac{k[X,Y,Z,T]}{(X^{r}Y-F(X,Z,T))}$, where $r>1$. Let $x$ be the image of $X$ in $A^{\prime}$. If $G:= X^{r}Y-F(X,Z,T)$ and $f(Z,T) :=F(0,Z,T) \neq 0$, then the following statements are equivalent: }
		\begin{enumerate}
			\item [(i)] $k[X,Y,Z,T]=k[X,G]^{[2]}$.
			
			\item [(ii)] $k[X,Y,Z,T]=k[G]^{[3]}$.
			
			\item [(iii)] $A^{\prime}=k[x]^{[2]}$.
			
			\item [(iv)] $A^{\prime}=k^{[3]}$.
			
			\item [(v)] $k[Z,T]=k[f]^{[1]}$.

		\end{enumerate}
	
	Theorem A provides a common framework for understanding the non-triviality of the Russell-Koras threefold 
(over $\mathbb{C}$) and the Asanuma threefold (in positive characteristic), both of which have been central objects
 of study in Affine Algebraic Geometry (cf. \cite[Section 4]{Ijp}). Also note that the equivalence of (ii) and (iv)
 above establishes a special case of the Abhyankar-Sathaye Conjecture for linear hyperplanes in $k^{[4]}$.

    \smallskip
	In view of the importance of Theorem A, A. K. Dutta had asked the authors whether similar results as Theorem A
 hold over the ring $A$ (as in \eqref{A}) when $m >1$, in particular: 
	
	\smallskip
	\noindent
	{\bf Question 2.} For $m>1$, is the condition $k[Z,T]=k[f]^{[1]}$ both necessary and sufficient for the ring $A$ (as in \eqref{A}) to be $k^{[m+2]}$ ?
     
     \smallskip	
	
	 \thref{ex1} shows that when $m \geqslant 2$, the condition that $f$ is a coordinate in $k[Z,T]$ is not sufficient for $A$ to be $k^{[m+2]}$ in general. However in section 3, we will show that Question 2 indeed has an affirmative
	 answer when $F$ is of the form
 	$$
	F(X_{1}, \ldots, X_{m}, Z,T)= f(Z,T)+ (X_{1} \cdots X_{m})g(X_{1}, \ldots , X_{m},Z,T).
	$$
	 In fact, we prove the following generalisation of Theorem A (\thref{equiv}):
	
 \medskip
 
 \noindent
 {\bf Theorem B.}
 	{\it	Let $k$ be a field and 
 		$$
 		A= \frac{k\left[ X_{1}, \ldots , X_{m}, Y,Z,T\right]}{\left(X_{1}^{r_{1}} \cdots X_{m}^{r_{m}}Y- F(X_{1}, \ldots, X_{m}, Z,T) \right)},
		~~r_{i} >1  \text{~for all~} i, 1 \leqslant i \leqslant m.
 		$$  
		Let  $F(X_{1}, \ldots , X_{m},Z,T)=f(Z,T)+ (X_{1} \cdots X_{m})g(X_{1}, \ldots , X_{m},Z,T)$ 
		be such that $f(Z,T) \neq 0$. Let $G=X_{1}^{r_{1}} \cdots X_{m}^{r_{m}}Y- F(X_{1}, \ldots, X_{m}, Z,T)$ 
		and $x_{1}, \ldots , x_{m}$ denote the images in $A$ of $X_{1}, \ldots , X_{m}$ respectively.  Then the following statements are equivalent:}
 	\begin{enumerate}
 		\item [(i)] $k[X_{1}, \ldots , X_{m},Y,Z,T]=k[X_{1}, \ldots, X_{m},G]^{[2]}$.
 		
 		\item[(ii)]  $k[X_{1}, \ldots , X_{m},Y,Z,T]=k[G]^{[m+2]}$.
 		
 		\item[(iii)] $A=k[x_{1}, \ldots , x_{m}]^{[2]}$.
 		
 		\item[(iv)] $A=k^{[m+2]}$.
 		
 		\item[(v)] $k[Z,T]=k[f(Z,T)]^{[1]}$.

 	\end{enumerate}
We actually prove (Theorems \ref{equiv} and \ref{eq13}) the 
equivalence of each of the above statements with eight other technical statements, involving stable isomorphisms, affine fibrations
 and two invariants --- the {\it Derksen invariant} and the {\it Makar-Limanov invariant}
(both the invariants are defined in section 2).  The main theorem (Theorem \ref{equiv}) provides a
 connection between the Epimorphism Problem, Zariski Cancellation Problem and Affine Fibration Problem in higher dimensions.
We also establish a generalisation of Theorem B over any commutative Noetherian domain $R$ such that either $R$ is seminormal or $\mathbb{Q} \subset R$ (\thref{geqv}).

The equivalence of (v) with any of the remaining statements in Theorem B shows that 
a certain property of the polynomial $G$ in $m+3$ variables (the property of being a coordinate)
is determined entirely by a property of the polynomial $f$ in two variables, i.e., a question on 
a polynomial in $m+3$ variables reduces to a question on a polynomial in 2 variables.
The equivalence of (iv) and (v) answers Question 2 for the particular structure of $F$; 
the equivalence of (ii) and (iv) gives an affirmative answer to Question 1 
 (Abhyankar-Sathaye Conjecture) for $n \geqslant 4$
 and for a hypersurface of the form $G$. This result may be considered as a partial generalisation of the theorem on Linear Planes due to A. Sathaye (\cite{sp}) and P. Russell (\cite[2.3]{rp}), mentioned earlier. 
 
In section 4, we consider rings of type $A$ (as in (\ref{A}))
when the Derksen invariant  $\dk(A)=k[x_1,\ldots,x_m,z,t]$. 
We first describe some necessary conditions for two such rings to be isomophic (\thref{isocl}). 
Next we deduce some necessary and sufficient conditions for an endomorphism of any such $A$ to be an automorphism (\thref{aut1}). 
Finally, we determine the isomorphism classes of the family of rings defined by $A$, 
when $F=f(Z,T)$ and $f(Z,T)$ is a non-trivial line 
(\thref{isocl1}). The results yield an infinite family 
of pairwise non-isomorphic affine varieties of higher dimensions ($\geqslant 3$),
which are counterexamples to the Zariski Cancellation Problem in positive characteristic (\thref{niso}).

 In the next section, we recall the notion of exponential maps, Grothendieck groups and some earlier results which will be used in this paper.

\section{Preliminaries}
	
All rings and algebras will be assumed to be commutative with unity. We first fix some notation.
Capital letters like $X,Y,Z,T, U,V, X_1, \dots, X_m$, etc, will denote indeterminates
over the respective ground rings or fields. 
	
     Let $R$ be a ring and $B$ an $R$-algebra. For a prime ideal $\p$ of $R$, 
let $k(\p)$ denote the field $\frac{R_{\p}}{\p R_{\p}}$ and $B_{\p}$ 
denote the ring $S^{-1}B$, where $S=R \setminus \p$. Thus 
     $\frac{B_{\p}}{\p B_{\p}}=B\otimes_R k(\p)$. 
	 Recall that $k$ will always denote a field and $R^{[n]}$ a polynomial ring in $n$ variables over $R$.

	 We first recall a few definitions. 
	 
	 \medskip
	 \noindent 
	 {\bf Definition.} A polynomial $h \in k[X,Y]$ is said to be a {\it line} in $k[X,Y]$ if $\frac{k[X,Y]}{(h)} = k^{[1]}$. Furthermore, if $k[X,Y] \neq k[h]^{[1]}$, then $h$ is said to be a {\it non-trivial line} in $k[X,Y]$. 
	 
   \medskip
   \noindent
   {\bf Definition.} A finitely generated flat $R$-algebra $B$ is said to be an {\it $\A^n$-fibration} over $R$
   	if $B \otimes_R k(\p) = k(\p)^{[n]}$ for every prime ideal $\p$ of $R$.

   \medskip
   \noindent
   {\bf Definition.}  A polynomial $h \in R[X,Y]$ is said to be a {\it residual coordinate}, if for every
  $\p \in \Spec(R)$, $$
R[X,Y] \otimes_{R} \kappa(\p)=(R[h] \otimes_{R} \kappa(\p) )^{[1]}.
$$

 We now define an exponential map on a $k$-algebra $B$ and two invariants related to it, namely, the {\it Makar-Limanov invariant}  and the {\it Derksen invariant}.
 
  \medskip
 \noindent
 {\bf Definition.}
 	Let $B$ be a $k$-algebra and $\phi: B \rightarrow B^{[1]}$ be a $k$-algebra homomorphism. For an indeterminate $U$ over $B$, 
let $\phi_{U}$ denote the map $\phi: B \rightarrow B[U]$. Then $\phi$ is said to be an {\it exponential map} on $B$, if the following conditions are satisfied:
 	
 \begin{itemize}
 	\item [\rm (i)] $\epsilon_{0} \phi_{U} =id_{B}$, where $\epsilon_{0}: B[U] \rightarrow B$ is the evaluation map at $U=0$.
 	
 	\item [\rm (ii)]  $\phi_{V} \phi_{U}=\phi_{U+V}$, where $\phi_{V}:B \rightarrow B[V]$ is extended to a $k$-algebra homomorphism $\phi_{V}:B[U] \rightarrow B[U,V]$, by setting $\phi_{V}(U)=U$.
 \end{itemize}

 	The ring of invariants of $\phi$ is a subring of $B$ defined as follows:
 	$$
 	B^{\phi}:=\left\{b \in B \, | \phi(b)=b  \right\}.
 	$$
 If $B^{\phi}\neq B$, then $\phi$ is said to be {\it non-trivial}.
 	Let EXP($B$) denote the set of all exponential maps on $B$. The {\it Makar-Limanov invariant} of $B$ is defined to be
 	$$
 	\ml(B):= \bigcap_{\phi \in \text{EXP}(B)} B^{\phi}
 	$$
 	and the {\it Derksen invariant} is a subring of $B$ defined as
 	$$
 	\dk(B):=k\left[ b \in B^{\phi} \,|\, \phi \in \text{EXP}(B)\, \text{and}\, \, B^{\phi} \subsetneqq B   \right].
 	$$
 
 	Next we record some useful results on exponential maps. The following two lemmas can be found in \cite[Chapter I]{miya}, \cite{cra} and \cite{inv}.
 	
 	\begin{lem}\thlabel{prope}
 	Let $B$ be an affine domain over $k$ and $\phi$ be a non-trivial exponential map on $B$. Then the following statements hold:
 	
 	\begin{itemize}

 		\item[\rm(i)] $B^{\phi}$ is a factorially closed subring of $B$, i.e., for any non-zero $a,b \in B$, if $ab \in B^{\phi}$, then $a, b \in B^{\phi}$. In particular, $B^{\phi}$ is algebraically closed in $B$. 
 		
 		\item[\rm(ii)] $\td_k B^{\phi}=\td_k B-1$.

 		\item[\rm(iii)]  For a multiplicatively closed subset $S$ of $B^{\phi} \setminus \{0\}$, $\phi$ induces a non-trivial exponential map $S^{-1}\phi$ on $S^{-1}B$ such that $(S^{-1}B)^{S^{-1}\phi}=S^{-1}(B^{\phi})$.
 	\end{itemize}

 \end{lem}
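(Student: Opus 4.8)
My approach rests on the \emph{degree function} attached to $\phi$. For $0 \neq b \in B$ I write $\phi_U(b) = c_0(b) + c_1(b)U + \dots + c_n(b)U^n$ with $c_n(b) \neq 0$ and set $\deg_\phi(b) := n$ (and $\deg_\phi(0) := -\infty$). Comparing the coefficient of the highest power of $U$ on the two sides of the cocycle identity $\phi_V\phi_U = \phi_{U+V}$ shows that the leading coefficient $c_n(b)$ lies in $B^\phi$. From this I would first record the three elementary facts, all using that $B$ is a domain: (a) $\deg_\phi(bb') = \deg_\phi(b) + \deg_\phi(b')$; (b) $\deg_\phi(b+b') \leqslant \max\{\deg_\phi(b), \deg_\phi(b')\}$; (c) for $b \neq 0$, $\deg_\phi(b) = 0$ exactly when $b \in B^\phi$.

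Part (i) then falls out at once. If $ab \in B^\phi$ with $a,b \neq 0$, then (a) gives $\deg_\phi(a) + \deg_\phi(b) = 0$ with both terms $\geqslant 0$, so each vanishes and $a,b \in B^\phi$; this is factorial closedness. For algebraic closedness I would use (a) and (b): an element $a$ algebraic over $B^\phi$ satisfies a minimal relation $a^n = -(c_{n-1}a^{n-1} + \dots + c_0)$ with $c_i \in B^\phi$, and comparing $\deg_\phi$ of the two sides forces $\deg_\phi(a) = 0$, hence $a \in B^\phi$.

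For part (iii) I would simply transport $\phi$ along the localization. Since $S \subseteq B^\phi$ consists of $\phi$-fixed elements, the rule $(S^{-1}\phi)_U(b/s) := \phi_U(b)/s$ is well defined on $(S^{-1}B)[U]$ and inherits the two exponential-map axioms from $\phi$; it is non-trivial because any denominator in $S$ can be cleared. The inclusion $S^{-1}(B^\phi) \subseteq (S^{-1}B)^{S^{-1}\phi}$ is formal, and the reverse inclusion is a degree argument: if $\phi_U(b)/s = b/s$ then $\phi_U(b) = b$ after clearing a factor from $S \subseteq B^\phi$, so $\deg_\phi(b) = 0$ and $b \in B^\phi$.

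The substantive point is part (ii). My plan is to localize at $S = B^\phi \setminus \{0\}$ and invoke (iii): this reduces the claim to showing that an affine domain $C$ over a field $K$ carrying a non-trivial exponential map $\psi$ with $C^\psi = K$ satisfies $\td_K C = 1$. The inequality $\td_K C \geqslant 1$ is easy — $C \neq K$, and a field has no proper factorially closed subring (by (i)) hence no non-trivial exponential map, so $C$ cannot be algebraic over $K$. The reverse inequality is where the real work is: assuming $b_1, b_2 \in C$ algebraically independent over $K$, one uses that their $\psi$-leading coefficients lie in $C^\psi = K$, hence are units, to cancel the leading terms of suitable powers of $b_1, b_2$ and produce a $K$-algebraically independent pair of strictly smaller total $\psi$-degree, contradicting infinite descent; equivalently one passes to the associated graded ring $\gr_\psi C$. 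I expect this descent (and the companion fact that in the un-localized setting the cancellation first requires a further localization at a leading coefficient) to be the main obstacle, and at that point I would invoke the standard treatment in \cite[Chapter I]{miya}, \cite{cra}, \cite{inv}. Once $\td_K C = 1$ is in hand, part (i) gives that $B^\phi$ is algebraically closed in $B$, so the transcendence degree of the fraction field of $B$ over that of $B^\phi$ equals $\td_k B - \td_k B^\phi$ and also equals $\td_K C = 1$, which is (ii).
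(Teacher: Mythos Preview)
The paper does not prove this lemma at all; it merely records it as a known result with references to \cite[Chapter~I]{miya}, \cite{cra}, and \cite{inv}. Your sketch via the $\phi$-degree function is exactly the standard argument in those sources: your proofs of (i) and (iii) are correct as written, and your reduction of (ii) to the case $C^{\psi}=K$ followed by a deferral to the cited references is precisely what the paper itself does (only with less detail), so there is nothing to compare.
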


 	\begin{lem}\label{poly}
 		Let $k$ be a field and $B=k^{[n]}$. Then $\dk(B)=B$ for $n\ge 2$ and $\ml(B)=k$.
 	\end{lem}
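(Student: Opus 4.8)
The plan is to compute both invariants by exhibiting sufficiently many explicit non-trivial exponential maps on $B=k^{[n]}=k[X_1,\dots,X_n]$. The building blocks are the \emph{coordinate translations}: for each $i$ with $1\le i\le n$, let $\phi_i\colon B\to B[U]$ be the $k$-algebra homomorphism with $\phi_i(X_i)=X_i+U$ and $\phi_i(X_j)=X_j$ for $j\ne i$. A routine verification of the two axioms shows $\phi_i$ is an exponential map (evaluation at $U=0$ is the identity, and the cocycle identity follows from $(X_i+U)+V=X_i+(U+V)$), and it is non-trivial since $\phi_i(X_i)\ne X_i$. I would then check that $B^{\phi_i}=k[X_1,\dots,X_{i-1},X_{i+1},\dots,X_n]$: one inclusion is clear, and for the other, writing $b\in B^{\phi_i}$ as $\sum_{j\ge 0}b_jX_i^{\,j}$ with each $b_j$ free of $X_i$ and comparing the top-degree coefficient in $U$ of $\phi_i(b)=\sum_j b_j(X_i+U)^j$ forces $b_j=0$ for all $j\ge 1$. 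Alternatively, this subring is factorially closed of transcendence degree $n-1$ by \thref{prope} and contains all $X_j$ with $j\ne i$, which already pins it down.

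For the Makar--Limanov invariant, every exponential map is a $k$-algebra homomorphism, so $k\subseteq B^\phi$ for all $\phi$ and hence $k\subseteq\ml(B)$. Conversely $\ml(B)\subseteq\bigcap_{i=1}^n B^{\phi_i}=\bigcap_{i=1}^n k[X_1,\dots,X_{i-1},X_{i+1},\dots,X_n]=k$, the last equality because a polynomial omitting every variable must lie in $k$. Thus $\ml(B)=k$.

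For the Derksen invariant, assume $n\ge 2$ and fix an index $i$; since $n\ge 2$ we may pick some $j\ne i$, and then $X_i$ lies in $B^{\phi_j}=k[X_1,\dots,X_{j-1},X_{j+1},\dots,X_n]\subseteq\dk(B)$ because $\phi_j$ is non-trivial. Since this holds for each $i$, all the generators $X_1,\dots,X_n$ of $B$ belong to $\dk(B)$, so $\dk(B)=B$. I anticipate no genuine obstacle here; the only slightly delicate point is the exact determination of $B^{\phi_i}$, which in any case can be extracted from \thref{prope}. Note that the hypothesis $n\ge 2$ is essential for the Derksen statement: when $n=1$ one has $B^{\phi_1}=k$, and in fact $\dk(k^{[1]})=k\ne k^{[1]}$.
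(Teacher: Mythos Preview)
Your argument is correct and is the standard direct proof via the coordinate translations $\phi_i$. Note that the paper itself does not actually prove this lemma: it is stated without proof and attributed to \cite[Chapter~I]{miya}, \cite{cra} and \cite{inv}, so there is no ``paper's own proof'' to compare against beyond these references; your self-contained argument is exactly the kind of justification one would supply if asked to unpack those citations.
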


 Next we define {\it proper} and {\it admissible $\mathbb{Z}$-filtration} on an affine domain. 
  
   \medskip
 \noindent
 {\bf Definition.}
  	Let $k$ be a field and $B$ be an affine $k$-domain. A collection $\{B_{n}\,\,| \, n \in \mathbb{Z}\}$ of $k$-linear subspaces of $B$ is said to be a {\it proper  $\mathbb{Z}$-filtration} if
  	
  	\begin{itemize}
  		\item [\rm (i)]  $B_{n} \subseteq B_{n+1}$, for every $n \in \mathbb{Z}$.
  		
  		\item [\rm (ii)] $B= \bigcup_{n \in \mathbb{Z}} B_{n}$.
  		
  		\item [\rm (iii)]  $\bigcap_{n \in \mathbb{Z}} B_{n}=\{0\}$.
  		
  		\item [\rm (iv)] $(B_{n} \setminus B_{n-1}).(B_{m}\setminus B_{m-1}) \subseteq B_{m+n} \setminus B_{m+n-1}$ for all $m,n \in \mathbb{Z}$.
  	\end{itemize}

A proper $\mathbb{Z}$-filtration $\{B_{n}\}_{n \in \mathbb{Z}}$ of $B$ is said to be {\it admissible} if there is 
a finite generating set $\Gamma$ of $B$ such that for each $n \in \mathbb{Z}$, every element in $B_{n}$ can be
 written as a finite sum of monomials from $B_{n} \cap k[\Gamma]$.

A proper $\mathbb{Z}$-filtration $\{B_{n}\}_{n \in \mathbb{Z}}$ of $B$ defines an associated graded domain defined by 
$$
gr(B):= \bigoplus_{n \in \mathbb{Z}} \frac{B_{n}}{B_{n-1}}.
$$
	It also defines the natural map $\rho: B \rightarrow gr(B)$ such that $\rho(b)=b+B_{n-1}$, if $b \in B_{n} \setminus B_{n-1}$.
	
	We now record a result on homogenization of exponential maps due to Derksen {\it{et\, al.}} \cite{dom}. The following version can be found in \cite[Theorem 2.6]{cra}.
	
	\begin{thm}\thlabel{dhm}
		Let $B$ be an affine domain over a field $k$ with an admissible proper $\mathbb{Z}$-filtration
		and $gr(B)$ be the induced $\mathbb{Z}$-graded domain. Let $\phi$ be a non-trivial exponential map on $B$. Then $\phi$
		induces a non-trivial homogeneous exponential map $\overline{\phi}$ on $gr(B)$ such that $\rho(B^{\phi}) \subseteq gr(B)^{\overline{\phi}}.$
		
	\end{thm}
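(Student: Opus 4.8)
The plan is to transport $\phi$ to $gr(B)$ by taking ``leading forms'', after equipping $B[U]$ with the right filtration. Let $\deg\colon B\to\mathbb{Z}\cup\{-\infty\}$ be the degree function attached to $\{B_n\}$, so that $\rho(b)$ is the image of $b$ in $B_{\deg b}/B_{\deg b-1}$, and write $\phi_U(b)=\sum_{i\ge 0}\phi_{(i)}(b)U^{i}$ with $\phi_{(0)}=\mathrm{id}$; the $\phi_{(i)}$ form a higher derivation and, for each $b$, $\phi_{(i)}(b)=0$ for $i\gg 0$ since $\phi$ is an exponential map. The key device is to extend $\deg$ to $B[U]$ by assigning $U$ a weight $w\in\mathbb{Z}$, i.e. $\deg\bigl(\sum_i p_iU^i\bigr):=\max_i\{\deg p_i+iw\}$; then $B[U]$ is a filtered ring and one checks $gr(B[U])=gr(B)[U]$ with $\deg U=w$. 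I would take $w=w_0$ to be the \emph{largest} value of $w$ for which $\phi_U$ is degree-non-increasing, i.e. $\deg\phi_U(b)\le\deg b$ for all $b$. Since $\phi_{(0)}(b)=b$, this in fact forces $\deg\phi_U(b)=\deg b$, so $\phi_U\colon B\to B[U]$ is a \emph{filtered} $k$-algebra homomorphism, and it therefore induces a graded $k$-algebra homomorphism $\overline{\phi}:=gr(\phi_U)\colon gr(B)\to gr(B)[U]$, where $\overline{\phi}(\rho(b))$ is the leading form of $\phi_U(b)$.

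Two points need justification. First, the existence of a largest such $w_0$: because $\{B_n\}$ is \emph{admissible}, every $b\in B_n$ is a sum of monomials $\gamma^{\alpha}\in B_n$ in a fixed finite generating set $\Gamma$, and $\deg$ is additive on such monomials; hence ``$\phi_U$ degree-non-increasing'' is equivalent to the finitely many inequalities $\deg\phi_{(i)}(\gamma)+iw\le\deg\gamma$ for $\gamma\in\Gamma$, $i\ge 1$. A sufficiently negative $w$ satisfies all of them, while non-triviality of $\phi$ (which forces some generator $\gamma$ to have $\phi_{(i)}(\gamma)\ne 0$ for some $i\ge 1$, as otherwise $\phi_U$ would fix the subalgebra generated by $\Gamma$, i.e. all of $B$) bounds the admissible $w$ from above; so $w_0$ exists, a priori in $\mathbb{Q}$, and after rescaling the $\mathbb{Z}$-filtration — which only relabels the grading of $gr(B)$ — we may take $w_0\in\mathbb{Z}$. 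Second, for this critical $w_0$ at least one of the inequalities is an equality, say $\deg\phi_{(i)}(\gamma)+iw_0=\deg\gamma$ with $i\ge 1$; then the leading form of $\phi_U(\gamma)$ has a nonzero $U^{i}$-term, so $\overline{\phi}(\rho(\gamma))\ne\rho(\gamma)$ and $\overline{\phi}$ is non-trivial. Homogeneity of $\overline{\phi}$ (with $\deg U=w_0$) is built into the construction.

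It then remains to verify that $\overline{\phi}$ is an exponential map and that $\rho(B^{\phi})\subseteq gr(B)^{\overline{\phi}}$, which I expect to be formal. For axiom (i): the $U^{0}$-coefficient of the leading form of $\phi_U(b)$ is $\rho(\phi_{(0)}(b))=\rho(b)$ because $\deg\phi_{(0)}(b)=\deg b$, so $\epsilon_0\circ\overline{\phi}_U=\mathrm{id}$. For axiom (ii): assign each of $U,V,U+V$ the weight $w_0$; then the extended maps $\phi_V\colon B[U]\to B[U,V]$ and $\phi_U\colon B[V]\to B[U,V]$ are again filtered, $gr(B[U,V])=gr(B)[U,V]$, and functoriality of $gr$ for filtered rings applied to the identity $\phi_V\phi_U=\phi_{U+V}$ yields $\overline{\phi}_V\overline{\phi}_U=\overline{\phi}_{U+V}$. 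Finally, if $b\in B^{\phi}$ then $\phi_U(b)=b$ is constant in $U$, so its leading form is $\rho(b)$; hence $\overline{\phi}(\rho(b))=\rho(b)$, i.e. $\rho(b)\in gr(B)^{\overline{\phi}}$.

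The main obstacle is the first step: establishing that a \emph{finite, critical} weight $w_0$ exists, and recognizing that admissibility is precisely what allows the degree estimate for arbitrary elements of $B$ to be reduced to the finite generating set $\Gamma$ (a routine-looking but indispensable bookkeeping point), along with the rescaling needed to keep $w_0$ integral. Once the correct filtration on $B[U]$ is in place, the exponential-map axioms for $\overline{\phi}$, its non-triviality, and the containment $\rho(B^{\phi})\subseteq gr(B)^{\overline{\phi}}$ all follow from functoriality of the associated-graded construction.
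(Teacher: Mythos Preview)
The paper does not give its own proof of this theorem; it is recorded there as a known result, attributed to Derksen--Hadas--Makar-Limanov and cited in the form stated by Crachiola. Your sketch is correct and is essentially the argument in those references: extend the filtration to $B[U]$ by assigning $U$ the critical weight $w_0$ (whose existence is exactly where admissibility and the finiteness of $\Gamma$ are used), pass to the associated graded of $\phi_U$, and read off non-triviality from the equality case at $w_0$; the invariance statement and the exponential-map axioms then follow formally. The only cosmetic point is the integrality of $w_0$: rather than rescaling the filtration, the usual presentation simply works with the finitely many rational thresholds $(\deg\gamma-\deg\phi_{(i)}(\gamma))/i$ and clears denominators once, but your rescaling remark accomplishes the same thing.
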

	
	We quote below a criterion for flatness from \cite[(20.G)]{mat}.
	\begin{lem}\thlabel{flat}
		Let $R \rightarrow C \rightarrow D$ be local homomorphisms of Noetherian local rings, $\kappa$  the residue field of $R$ and $M$ a finite $D$ module. Suppose $C$ is $R$-flat. Then the following statements are equivalent:
		\begin{itemize}
			\item [\rm(i)] $M$ is $C$-flat.
			
			\item[\rm(ii)]  $M$ is $R$-flat and $M \otimes_{R} \kappa$ is $C \otimes_{R} \kappa$-flat.
		\end{itemize}
		\end{lem}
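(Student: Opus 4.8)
This is the relative form of the local criterion for flatness, and the plan is to deduce it from the \emph{absolute} local criterion applied over the intermediate ring $C$. First I would fix notation: let $\mathfrak{m}$ be the maximal ideal of $R$, let $\mathfrak{m}_C,\mathfrak{m}_D$ be those of $C,D$, and set $I:=\mathfrak{m}C$. Since $R\to C$ and $C\to D$ are local, $I\subseteq\mathfrak{m}_C$ and $ID\subseteq\mathfrak{m}_D$. I would then record the identifications $C\otimes_R\kappa=C/I$ and $M\otimes_R\kappa=M/IM=M\otimes_C(C/I)$, so that statement (ii) becomes: $M$ is $R$-flat and $M/IM$ is $C/I$-flat.

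The implication (i)$\Rightarrow$(ii) is formal, and I would dispatch it first: flatness composes, so $M$ flat over $C$ together with $C$ flat over $R$ gives $M$ flat over $R$; and $M\otimes_R\kappa=M\otimes_C(C\otimes_R\kappa)$ is flat over $C\otimes_R\kappa$ by base change of the flat $C$-module $M$.

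The substance is (ii)$\Rightarrow$(i), where the plan is to invoke the absolute local criterion for flatness for the $C$-module $M$ relative to the ideal $I$. Before applying it I would check its hypothesis, namely that $M$ is ideally separated for $I$: this follows because $I^nM\subseteq\mathfrak{m}_D^{\,n}M$ and $M$ is a finite module over the Noetherian local ring $D$, whence $\bigcap_n I^nM=0$ by Krull's intersection theorem. The criterion then reduces $C$-flatness of $M$ to two conditions: that $M/IM$ be $C/I$-flat, and that $\mathrm{Tor}_1^{C}(C/I,M)=0$; the former is half of hypothesis (ii). For the latter --- the one genuine computation --- I would use a change-of-rings argument: pick a resolution $P_\bullet\to\kappa$ of $\kappa$ by finite free $R$-modules; flatness of $C$ over $R$ turns $C\otimes_R P_\bullet\to C/I$ into a free $C$-resolution of $C/I$, and applying $-\otimes_C M$ returns the complex $P_\bullet\otimes_R M$, so that $\mathrm{Tor}_i^{C}(C/I,M)\cong\mathrm{Tor}_i^{R}(\kappa,M)$ for every $i$. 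Since $M$ is $R$-flat by (ii), the right-hand side vanishes for $i\geq 1$, giving $\mathrm{Tor}_1^{C}(C/I,M)=0$ and hence $C$-flatness of $M$.

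I expect no deep obstacle here: the result is essentially homological bookkeeping once the setup is in place. The only points needing care are verifying the ideal-separatedness hypothesis so that the absolute local flatness criterion is legitimately applicable over $C$, and correctly tracking the change-of-rings isomorphism $\mathrm{Tor}_1^{C}(C/I,M)\cong\mathrm{Tor}_1^{R}(\kappa,M)$.
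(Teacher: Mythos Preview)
The paper does not prove this lemma; it is simply quoted from Matsumura's \emph{Commutative Algebra} \cite[(20.G)]{mat} without argument. Your proof is correct and is essentially the standard one found there: reduce (ii)$\Rightarrow$(i) to the absolute local criterion over $C$ relative to the ideal $I=\mathfrak{m}C$, and use flatness of $C$ over $R$ to identify $\mathrm{Tor}_1^{C}(C/I,M)$ with $\mathrm{Tor}_1^{R}(\kappa,M)$, which vanishes by the $R$-flatness of $M$.

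One small point of care: the ``ideally separated'' hypothesis in the local criterion requires that $\mathfrak{a}\otimes_C M$ be $I$-adically separated for every finitely generated ideal $\mathfrak{a}$ of $C$, not merely that $M$ itself be separated. Your Krull-intersection argument still works, since $C$ is Noetherian so $\mathfrak{a}$ is finitely generated, whence $\mathfrak{a}\otimes_C M$ is a finite $D$-module and $\bigcap_n I^n(\mathfrak{a}\otimes_C M)\subseteq\bigcap_n\mathfrak{m}_D^{\,n}(\mathfrak{a}\otimes_C M)=0$; but your write-up glosses over this and should make it explicit.
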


 The following is a well known result (\cite{AEH}).

\begin{thm}\thlabel{aeh}
	Let $k$ be a field and $R$ be a normal domain such that $k \subset R \subset k^{[n]}$. If $\td_{k} R=1$, then $R = k^{[1]}$.
\end{thm}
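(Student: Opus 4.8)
The plan is to realise $\operatorname{Spec}R$ as the projective line over $k$ with a single closed point deleted and then to force that point to be $k$-rational; the bookkeeping will be carried by the unit group of $R$ and, in positive characteristic, by a comparison of leading forms. First, since $\operatorname{Frac}(R)$ is a subfield of $k(x_{1},\dots,x_{n})$ of transcendence degree one over $k$, Lüroth's theorem (in the form valid over an arbitrary ground field for transcendence degree one) gives $\operatorname{Frac}(R)=k(\theta)$ for some $\theta$. Note that $R$ is not a field, since a subfield of $k^{[n]}$ containing $k$ equals $k$; so $\dim R=1$, and being normal, $R$ is the intersection of the valuation rings of $k(\theta)$ containing it. The valuation rings of the rational function field $k(\theta)$ that are trivial on $k$ are classically the local rings $\mathcal{O}_{\mathbb{P}^{1}_{k},P}$ at the closed points $P$ of $\mathbb{P}^{1}_{k}$, all discrete of rank one, so $R=\bigcap_{P\in\Sigma}\mathcal{O}_{\mathbb{P}^{1}_{k},P}$ for some set $\Sigma$ of closed points; and since $R\ne k=\bigcap_{\text{all }P}\mathcal{O}_{\mathbb{P}^{1}_{k},P}$, the complement $W$ of $\Sigma$ is nonempty.

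Next I would pin down $W$ using units. On one hand $R^{*}\subseteq(k^{[n]})^{*}=k^{*}$, so $R^{*}=k^{*}$. On the other hand, $u\in R^{*}$ precisely when $\operatorname{div}_{\mathbb{P}^{1}_{k}}(u)$ — a principal divisor of degree zero — is supported on $W$; and conversely, since $\deg\colon\operatorname{Pic}(\mathbb{P}^{1}_{k})\to\mathbb{Z}$ is injective, every degree-zero divisor supported on $W$ is principal and hence gives a unit of $R$. Thus $R^{*}/k^{*}$ is the group of degree-zero divisors supported on $W$, which is nontrivial as soon as $|W|\geqslant 2$. Therefore $W=\{P\}$ consists of a single closed point, $R=\mathcal{O}(\mathbb{P}^{1}_{k}\setminus\{P\})$ is a smooth affine curve over $k$, and it remains only to show $\deg P=1$: for then $\mathbb{P}^{1}_{k}\setminus\{P\}\cong\mathbb{A}^{1}_{k}$ and $R\cong k^{[1]}$.

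To bound $\deg P$ I would first base change the inclusion $R\hookrightarrow k^{[n]}$ along the faithfully flat extension $k\hookrightarrow\bar{k}$ (which preserves injectivity), getting $R\otimes_{k}\bar{k}=\mathcal{O}\bigl(\mathbb{P}^{1}_{\bar{k}}\setminus\{Q_{1},\dots,Q_{d_{0}}\}\bigr)\hookrightarrow\bar{k}^{[n]}$, where the $Q_{i}$ are distinct $\bar{k}$-points and $d_{0}$ is the separable degree of $\kappa(P)$ over $k$; the unit computation, now applied over $\bar{k}$, forces $d_{0}=1$, i.e.\ $\kappa(P)$ is purely inseparable over $k$. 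In characteristic zero this already gives $\deg P=1$. Suppose instead $\operatorname{char}k=p>0$ and $[\kappa(P):k]=q=p^{e}>1$; after a change of coordinate we may take $P$ to be the zero scheme of $\theta^{q}-c$ on $\mathbb{P}^{1}_{k}$ for some $c\in k\setminus k^{p}$. Then $w_{0}:=(\theta^{q}-c)^{-1}$ and $w_{1}:=\theta\,w_{0}$ lie in $R$ and satisfy $w_{1}^{q}=c\,w_{0}^{q}+w_{0}^{q-1}$. Viewing this in $k^{[n]}$: since $w_{0},w_{1}\notin k$ their images are non-constant and the summand $w_{0}^{q-1}$ has strictly smaller total degree than $w_{0}^{q}$, so comparing leading forms $\ell_{0},\ell_{1}$ of $w_{0},w_{1}$ yields $\ell_{1}^{q}=c\,\ell_{0}^{q}$ in $k^{[n]}$. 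Hence $c=(\ell_{1}/\ell_{0})^{q}$ is a $q$-th power in $k(x_{1},\dots,x_{n})$; since $k$ is algebraically closed in $k(x_{1},\dots,x_{n})$, we get $\ell_{1}/\ell_{0}\in k$, so $c\in k^{q}\subseteq k^{p}$, contradicting the choice of $c$. Therefore $\deg P=1$ and $R\cong k^{[1]}$.

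I expect the only step with real content to be this last one, and only in positive characteristic: base change to $\bar{k}$ rules out a separable residue extension at the deleted point, but not a purely inseparable one, and it is precisely the leading-form comparison applied to $w_{1}^{q}=c\,w_{0}^{q}+w_{0}^{q-1}$ that removes that possibility. The other ingredients — Lüroth's theorem, the valuative description of normal domains, and the divisor-class arithmetic of $\mathbb{P}^{1}_{k}$ — are routine.
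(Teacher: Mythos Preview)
The paper does not supply a proof of this result; it simply records it as well known and cites Abhyankar--Eakin--Heinzer \cite{AEH}. So there is no argument in the paper to compare yours against.

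Your proof is correct. The outline---L\"uroth to get $\operatorname{Frac}(R)=k(\theta)$, normality to write $R$ as $\bigcap_{P\in\Sigma}\mathcal{O}_{\mathbb{P}^{1}_{k},P}$, and the unit constraint $R^{*}=(k^{[n]})^{*}=k^{*}$ to force the complement $W$ to be a single closed point---is the standard route, and your bookkeeping is sound (in particular, the observation that a subfield of $k^{[n]}$ containing $k$ equals $k$, so $R$ is not a field, and the identification of $R^{*}/k^{*}$ with the degree-zero divisors supported on $W$ via $\operatorname{Pic}(\mathbb{P}^{1}_{k})\cong\mathbb{Z}$). The base change to $\bar{k}$ correctly kills any separable part of $\kappa(P)/k$, since flatness preserves the injection $R\otimes_{k}\bar{k}\hookrightarrow\bar{k}^{[n]}$ and the open subscheme $\mathbb{P}^{1}_{k}\setminus\{P\}$ is geometrically integral. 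The positive-characteristic step is the only place with real content, and your leading-form argument is clean: from $w_{1}^{q}=c\,w_{0}^{q}+w_{0}^{q-1}$ with $w_{0},w_{1}\in k^{[n]}\setminus k$, comparing top-degree homogeneous components forces $\ell_{1}^{q}=c\,\ell_{0}^{q}$, hence $c=(\ell_{1}/\ell_{0})^{q}$ lies in $k$ (as $k$ is algebraically closed in $k(x_{1},\dots,x_{n})$), contradicting $c\notin k^{p}$.
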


Next we state a version of the Russell-Sathaye criterion \cite[Theorem 2.3.1]{rs}, as 
presented in \cite[Theorem 2.6]{BD}.

\begin{thm}\thlabel{rs}
	Let $R \subset C$ be integral domains such that $C$ is a finitely generated $R$-algebra. 
Let $S$ be a multiplicatively closed subset of $R\setminus\{0\}$ generated by some prime elements of $R$ 
which remain prime in $C$. Suppose $S^{-1}C=(S^{-1}R)^{[1]}$ and, for every prime element $p \in S$, we have $pR=pC \cap R$ and $\frac{R}{pR}$ is algebraically closed in 
$\frac{C}{pC}$. Then $C=R^{[1]}$.
\end{thm}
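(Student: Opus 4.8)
The plan is to reduce everything to the case where $S$ is generated by a \emph{single} prime element, settle that case by a ``denominator descent'' argument, and treat the general case by induction on the number of generating primes. First I would note that, since $C = R[c_1,\dots,c_n]$ is a finitely generated $R$-algebra and $S^{-1}C = (S^{-1}R)[\theta]$ with $\theta$ transcendental over $S^{-1}R$, only finitely many prime generators of $S$ need to be inverted in order to write $\theta$ as an element of $S^{-1}C$ and each $c_i$ as an element of $(S^{-1}R)[\theta]$. Replacing $S$ by the multiplicatively closed set $S_0$ generated by those finitely many primes $p_1,\dots,p_s$ preserves $S_0^{-1}C = (S_0^{-1}R)[\theta]$ and the two arithmetic hypotheses (which involve only $R \subseteq C$ and the individual primes). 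So I may assume $S$ is generated by $p_1,\dots,p_s$ and induct on $s$, the case $s=0$ giving $C = S^{-1}C = R^{[1]}$ at once.

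For the single-prime case, $S = \{1,p,p^2,\dots\}$, I would first multiply $\theta$ by a power of $p$ so that $\theta \in C$; then $R[\theta] \subseteq C \subseteq p^{-N}R[\theta]$ for some $N \geqslant 0$. The crux is: whenever $R[\theta'] \subsetneq C$, where $\theta' \in C$ is transcendental over $R$ with $R[1/p][\theta'] = C[1/p]$, one can replace $\theta'$ by $\theta'' := (\theta' - r)/p \in C$ for a suitable $r \in R$ and obtain a \emph{strictly larger} subring $R[\theta''] \supsetneq R[\theta']$ (strict, since $\theta'' \in R[\theta']$ would force $1 \in pR$). To find $r$: choose $c' \in C \setminus R[\theta']$ with $pc' \in R[\theta']$ (take $c \notin R[\theta']$, let $m \geqslant 1$ be minimal with $p^m c \in R[\theta']$, and set $c' = p^{m-1}c$); write $pc' = h(\theta')$ with $h \in R[X]$ and reduce modulo $p$. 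Since $R/pR \hookrightarrow C/pC$ (using $pC \cap R = pR$) and $C/pC$ is a domain ($pC$ being prime), the reduction $\overline h$ is a nonzero polynomial over $R/pR$ vanishing at $\overline{\theta'}$; hence $\overline{\theta'}$ is algebraic over $R/pR$, so it lies in $R/pR$ by hypothesis, and any lift $r$ works.

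Iterating produces a chain $R[\theta] \subsetneq R[\theta_1] \subsetneq R[\theta_2] \subsetneq \cdots$ inside $C$, where $\theta_{j+1} = (\theta_j - r_{j+1})/p$, and the remaining point is termination. Here I would show: if $c \in p^{-e}R[\theta_j]$ with $e \geqslant 1$, then $c \in p^{-(e-1)}R[\theta_{j+1}]$. Indeed, writing $p^e c = g(\theta_j) = g(r_{j+1} + p\theta_{j+1}) = \sum_l p^l b_l \theta_{j+1}^l$ with $b_l \in R$ (the coefficient of $X^l$ in $g(r_{j+1}+pX)$ being divisible by $p^l$) and $b_0 = g(r_{j+1})$, reduction modulo $p$ forces $b_0 \in pC \cap R = pR$, so that $p^{e-1}c = b_0/p + \sum_{l \geqslant 1} p^{l-1} b_l \theta_{j+1}^l \in R[\theta_{j+1}]$. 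Applying this to each generator $c_i \in p^{-e_i}R[\theta]$, after $N := \max_i e_i$ steps every $c_i$ lies in $R[\theta_N]$, whence $C = R[\theta_N] = R^{[1]}$. For the inductive step with $s \geqslant 1$, I would localize at the set $S_1$ generated by $p_1,\dots,p_{s-1}$: then $p_s$ stays prime in $R_1 := S_1^{-1}R$ and in $C_1 := S_1^{-1}C$, one has $C_1[1/p_s] = (R_1[1/p_s])[\theta]$, $p_s C_1 \cap R_1 = p_s R_1$, and $R_1/p_sR_1$ is algebraically closed in $C_1/p_sC_1$ (localizing each hypothesis); so the single-prime case yields $S_1^{-1}C = C_1 = R_1^{[1]} = (S_1^{-1}R)^{[1]}$, and now $R \subseteq C$ with $S_1$ generated by the $s-1$ primes $p_1,\dots,p_{s-1}$ satisfies all the hypotheses, so induction concludes $C = R^{[1]}$.

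The step I expect to be the main obstacle is the termination of the descent: it is exactly there that both arithmetic hypotheses are used in an essential way --- ``$R/pR$ algebraically closed in $C/pC$'' to carry out each division by $p$, and ``$pC \cap R = pR$'' \emph{at every stage} to pin the constant term $b_0$ of the binomial expansion inside $pR$, which is what makes the $p$-denominators genuinely drop. The supporting verifications --- that all the hypotheses survive passage to $S_1^{-1}(-)$, and that transcendence of $\theta'$ and strictness of the inclusions persist --- are routine but require the usual care with contractions of ideals in integral domains.
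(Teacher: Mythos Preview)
The paper does not prove this theorem at all: it is quoted in the preliminaries as a known result, with the attribution ``Next we state a version of the Russell--Sathaye criterion \cite[Theorem 2.3.1]{rs}, as presented in \cite[Theorem 2.6]{BD}.'' So there is no in-paper proof to compare your proposal against.

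That said, your argument is essentially the classical Russell--Sathaye descent and is correct. The reduction to finitely many prime generators, the induction on $s$ via localisation at $S_1$, and the single-prime case are all sound. In the single-prime step, your use of $pC\cap R=pR$ to force the constant term $b_0$ into $pR$, and of the algebraic-closedness hypothesis to produce the shift $r$, are exactly the two places where the hypotheses enter, and your termination bound $N=\max_i e_i$ via the binomial expansion $g(r_{j+1}+pX)=\sum_l p^l b_l X^l$ is the standard and correct way to close the loop. One small point worth making explicit in the inductive step: when checking that $p_s$ remains prime in $C_1=S_1^{-1}C$, you implicitly need $p_i\notin p_sC$ for $i<s$; this follows from $p_sC\cap R=p_sR$ together with the (harmless) assumption that the generating primes are pairwise non-associate in $R$. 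With that caveat, your write-up reproduces the original Russell--Sathaye argument faithfully.
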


 Next we note down a lemma which follows from \cite[Theorem 7]{dutta}.
 
 \begin{lem}\thlabel{sepco}
 	 Let $f \in k[Z,T]$ such that $L[Z,T]=L[f]^{[1]}$, for some separable field extension of $L$ of $k$. Then $k[Z,T]=k[f]^{[1]}$. 
 \end{lem}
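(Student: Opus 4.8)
Our route is to descend the property of being a \emph{line} by reducing modulo $f$, and then to upgrade ``line'' to ``coordinate'' by a localisation argument that feeds into the Russell--Sathaye criterion \thref{rs}.

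First I would reduce the hypothesis modulo $f$. Since $L$ is $k$-flat, $(k[Z,T]/(f))\otimes_k L \cong L[Z,T]/(fL[Z,T])$, and $L[Z,T]=L[f]^{[1]}$ forces $L[Z,T]/(fL[Z,T])=L^{[1]}$. Thus $k[Z,T]/(f)$ is an affine $k$-algebra that becomes a polynomial ring in one variable after the separable base change $k\hookrightarrow L$, so by the triviality of separable forms of the affine line --- which is the substance of \cite[Theorem 7]{dutta} --- we get $k[Z,T]/(f)=k^{[1]}$. In particular $f$ is a prime element of $k[Z,T]$ (its zero-set is a line over $k$), whence $fk[Z,T]\cap k[f]=fk[f]$, and $k=k[f]/fk[f]$ is algebraically closed in the polynomial ring $k[Z,T]/(f)$.

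Next I would localise the hypothesis at the powers of $f$: writing $L[Z,T]=L[f][G]$ and inverting $f$ gives $L[Z,T][f^{-1}]=L[f,f^{-1}][G]=(L[f,f^{-1}])^{[1]}$, i.e. $k[Z,T][f^{-1}]$ becomes a polynomial ring in one variable over $k[f,f^{-1}]$ after the separable, faithfully flat base change $k[f,f^{-1}]\hookrightarrow k[f,f^{-1}]\otimes_k L=L[f,f^{-1}]$. Reducing to the case $L/k$ finite separable (the coordinate partner $G$ over $L$ involves only finitely many elements of $L$) and invoking the descent of separable $\mathbb{A}^1$-forms once more --- this time over the regular base ring $k[f,f^{-1}]$, whose Picard group vanishes --- yields $k[Z,T][f^{-1}]=(k[f,f^{-1}])^{[1]}$. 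Now apply \thref{rs} with $R=k[f]\ (=k^{[1]})$, $C=k[Z,T]$, and $S$ the multiplicative set generated by the prime $f$: one has $S^{-1}C=(S^{-1}R)^{[1]}$ by the previous sentence, and $fR=fC\cap R$ together with ``$R/fR$ algebraically closed in $C/fC$'' by the first step, so the criterion gives $C=R^{[1]}$, that is, $k[Z,T]=k[f]^{[1]}$.

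The main obstacle is the second step: passing from ``$f$ is a line over $k$'' to ``$f$ is a coordinate over $k$'' requires descending a polynomial-algebra structure not over a field but over the ring $k[f,f^{-1}]$, and one must check that the extension remains separable and that the reduction to a finite extension is harmless --- this is exactly where \cite[Theorem 7]{dutta} (with $\operatorname{Pic}(k[f,f^{-1}])=0$) does the work. An equivalent way to surmount it, bypassing Russell--Sathaye, is to descend the coordinate partner directly: for $L/k$ finite Galois the relevant datum is a cocycle in $\operatorname{Aut}_{L[f]}\big(L[f]^{[1]}\big)=\mathbb{G}_m(L)\ltimes(L\otimes_k k[f])$, whose first Galois cohomology vanishes by Hilbert's Theorem~90 and the normal basis theorem, producing a $\operatorname{Gal}(L/k)$-invariant --- hence $k$-rational --- partner $G'$, after which faithful flatness of $k\hookrightarrow L$ gives $k[f,G']=k[Z,T]$. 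In characteristic zero this second step is vacuous, every line in $k^{[2]}$ being a coordinate by the Abhyankar--Moh--Suzuki theorem, so the real content lies in positive characteristic.
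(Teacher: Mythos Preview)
Your argument is sound, but the paper does not actually give a proof: it simply records that the lemma ``follows from \cite[Theorem 7]{dutta}''.  The intended application is a single step, not the two-stage route you take.  Dutta's theorem is not merely the triviality of separable $\mathbb{A}^1$-forms over a \emph{field}; it treats separable $\mathbb{A}^1$-forms over a \emph{ring} $R$ (with trivial Picard group), and the conclusion is $R^{[1]}$.  Apply it directly with $R=k[f]$: after reducing to $L/k$ finite separable (as you do), the extension $k[f]\hookrightarrow L[f]=k[f]\otimes_kL$ is finite \'etale and faithfully flat, and
\[
k[Z,T]\otimes_{k[f]}L[f]\;=\;k[Z,T]\otimes_kL\;=\;L[Z,T]\;=\;(L[f])^{[1]},
\]
so $k[Z,T]$ is a separable $\mathbb{A}^1$-form over $k[f]$.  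Since $\operatorname{Pic}(k[f])=0$, Dutta's theorem gives $k[Z,T]=k[f]^{[1]}$ immediately.

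Your decomposition---first descend modulo $f$ to get a line over $k$, then descend over $k[f,f^{-1}]$ and glue via \thref{rs}---is correct, but it essentially reproves over $k[f]$ what Dutta's theorem already packages: you invoke the same result once over $k$ and once over $k[f,f^{-1}]$, then use Russell--Sathaye to pass from the localisation back to $k[f]$, whereas a single invocation over $k[f]$ suffices.  Your alternative via Galois cohomology is likewise valid and is, in effect, one way to prove Dutta's theorem in this special case.  Either route works; the paper's citation just skips the unpacking.
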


We now recall a fundamental result on residual coordinates (\cite[Theorem 3.2]{bdc}). 
Recall that an integral domain $R$ with field of fractions $K$ is called a seminormal domain 
if for any $a \in K$, the conditions $a^2, a^3 \in R$ imply that $a\in R$.

 \begin{thm}\thlabel{bd}
	Let $R$ be a Noetherian domain such that either $\mathbb{Q} \subset R$ or $R$ is seminormal. 
Then the following statements are equivalent:
	\begin{itemize}
		\item [\rm(i)] $h \in R[X,Y]$ is a residual coordinate. 
		
		\item[\rm (ii)]  $R[X,Y]=R[h]^{[1]}$.
	\end{itemize}
\end{thm}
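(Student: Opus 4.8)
The implication (ii)$\Rightarrow$(i) is immediate: from $R[X,Y]=R[h]^{[1]}=R[h][W]$ one gets $R[X,Y]\otimes_R\kappa(\p)=(R[h]\otimes_R\kappa(\p))[W]$ for every $\p\in\Spec(R)$. So the substance is (i)$\Rightarrow$(ii), and the plan is to route this through the structure theory of affine line fibrations. Write $B=R[X,Y]$ and $A=R[h]$. Applying the residual coordinate hypothesis at the generic point of the domain $R$ first shows that $h$ is transcendental over $\operatorname{Frac}(R)$, so that $A=R^{[1]}$ and $A$ inherits the hypothesis on $R$ (seminormality ascends to polynomial extensions, and $\mathbb{Q}\subset R$ gives $\mathbb{Q}\subset A$). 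I would then check that $B$ is a finitely presented, faithfully flat $A$-algebra which is an $\A^{1}$-fibration over $A$: finite presentation is clear since $B=A[X,Y]/(h-h(X,Y))$; for $\q\in\Spec(A)$ lying over $\p=\q\cap R$, the hypothesis gives $B\otimes_R\kappa(\p)=(A\otimes_R\kappa(\p))^{[1]}$, and hence $B\otimes_A\kappa(\q)=(B\otimes_R\kappa(\p))\otimes_{A\otimes_R\kappa(\p)}\kappa(\q)=\kappa(\q)^{[1]}$; flatness of $B$ over $A$ then follows from flatness over $R$ by the fibrewise flatness criterion (the non-finite variant of \thref{flat}).

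The next step is to show that $\Omega_{B/A}$ is free of rank one. From the presentation above, $\Omega_{B/A}$ is the cokernel of the map $B\to B\,dX\oplus B\,dY$ sending $1$ to $h_X\,dX+h_Y\,dY$, so it suffices to prove that the Jacobian ideal $\mathfrak J=(h_X,h_Y)$ is the unit ideal of $B$. For each $\p\in\Spec(R)$ the image $\bar h$ of $h$ in $\kappa(\p)[X,Y]$ is a coordinate, so $d\bar h$ extends to a $\kappa(\p)[X,Y]$-basis of $\Omega_{\kappa(\p)[X,Y]/\kappa(\p)}$; thus $(\bar h_X,\bar h_Y)$ is a unimodular row over $\kappa(\p)[X,Y]$, i.e.\ $\mathfrak J(B\otimes_R\kappa(\p))=B\otimes_R\kappa(\p)$. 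Consequently $V(\mathfrak J)\subseteq\Spec(B)=\A^{2}_{R}$ meets no fibre of $\A^{2}_{R}\to\Spec(R)$, so $V(\mathfrak J)=\varnothing$ and $\mathfrak J=B$; hence $(h_X,h_Y)$ is a unimodular row over $B$ and $\Omega_{B/A}\cong B$.

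The final step is the structure theory. The hypothesis that $A$ is seminormal or contains $\mathbb{Q}$ should be used precisely to promote the fibrewise statement ``$\bar h$ is a coordinate of $\kappa(\p)[X,Y]$'' to the \emph{local} statement $B_{\m}=A_{\m}^{[1]}$ for every maximal ideal $\m$ of $A$; this is where the results on affine line fibrations of Bass--Connell--Wright, Sathaye and Asanuma, together with seminormality techniques of Swan and Traverso, enter, and where Asanuma's characteristic-$p$ examples show the hypothesis cannot be dropped. Granting local triviality, the Bass--Connell--Wright theorem gives $B\cong\operatorname{Sym}_A(M)$ for a finitely generated rank-one projective $A$-module $M$; then $\Omega_{B/A}\cong\operatorname{Sym}_A(M)\otimes_AM$, so the previous step forces $M\cong A$, whence $B=\operatorname{Sym}_A(A)=A^{[1]}=R[h]^{[1]}$.

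The hard part will be exactly this last step: passing from the fibrewise coordinate condition to local triviality over $A$ genuinely requires the seminormality/characteristic-zero hypothesis, whereas the first two steps are formal and use no restriction on $R$. An alternative to invoking Bass--Connell--Wright is to begin with $K[X,Y]=K[h,g_0]$ over $K=\operatorname{Frac}(R)$, clear denominators to obtain a coordinate complementing $h$ after inverting some nonzero $s\in R$, and then patch this complement across $V(s)$ --- the gluing obstruction again being governed by seminormality --- finishing, after localizing so that $s$ becomes a product of primes of $R$ that stay prime in $B$, via the Russell--Sathaye criterion \thref{rs}.
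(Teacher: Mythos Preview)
The paper does not give a proof of this statement at all: \thref{bd} is quoted verbatim from \cite[Theorem~3.2]{bdc} as a known ``fundamental result on residual coordinates'', so there is no in-paper argument to compare your proposal against.

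That said, your outline is a reasonable reconstruction of how the Bhatwadekar--Dutta proof is organised: pass from $R$ to $A=R[h]$, verify that $B=R[X,Y]$ is a faithfully flat finitely presented $\mathbb{A}^{1}$-fibration over $A$, and then invoke the structure theory of $\mathbb{A}^{1}$-fibrations (Sathaye, Asanuma, Bass--Connell--Wright) together with seminormality to conclude $B=A^{[1]}$. Two small remarks. First, your appeal to \thref{flat} is slightly off as stated: that lemma is for a \emph{finite} $D$-module $M$, whereas $B$ is not finite over $A$; you need the version of the fibrewise flatness criterion valid for Noetherian local base changes without the finiteness hypothesis (e.g.\ \cite[Theorem~22.3]{matr}), which you correctly flag as the ``non-finite variant''. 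Second, your Jacobian step is fine in all characteristics: if $\kappa[X,Y]=\kappa[h,g]$ then the chain rule forces the Jacobian matrix of $(h,g)$ to be invertible over $\kappa[X,Y]$, so $(\bar h_X,\bar h_Y)$ is unimodular even when $\operatorname{char}\kappa>0$.

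You are also right that the only genuinely nontrivial point is the promotion from fibrewise triviality to local triviality of the $\mathbb{A}^{1}$-fibration $B/A$, and that this is exactly where the hypothesis ``$\mathbb{Q}\subset R$ or $R$ seminormal'' is consumed; everything else in your sketch is formal. Since the present paper treats \thref{bd} as a black box, your proposal goes well beyond what the paper itself supplies.
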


For the next three results, $A$ denotes the affine domain as in \eqref{A}.
The following result is proved in \cite[Lemma 3.3]{adv}.
	
	\begin{lem} \thlabel{subdk}
 $k[x_{1}, \ldots , x_{m}, z,t] \subseteq ~ \dk(A)$. 
	\end{lem}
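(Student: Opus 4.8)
The goal is to show that $k[x_1,\ldots,x_m,z,t]$ is contained in $\dk(A)$, where $A$ is the coordinate ring in \eqref{A}. By definition of the Derksen invariant, it suffices to produce one or more non-trivial exponential maps on $A$ whose rings of invariants together generate (as a $k$-algebra) a subring containing all of $x_1,\ldots,x_m,z,t$. Since $\dk(A)$ is a $k$-subalgebra of $A$, and it clearly contains $k$, the plan is to exhibit non-trivial exponential maps each of which fixes \emph{all} of $x_1,\ldots,x_m,z,t$ (but is non-trivial because it moves $y$); a single such map already has $k[x_1,\ldots,x_m,z,t]$ inside its ring of invariants, and hence inside $\dk(A)$.

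The natural candidate is the exponential map coming from the obvious $\Ga_a$-action that translates $y$. Concretely, in $A$ we have the relation $x_1^{r_1}\cdots x_m^{r_m}y = F(x_1,\ldots,x_m,z,t)$, so $y$ is, up to the non-zerodivisor $x_1^{r_1}\cdots x_m^{r_m}$, a function of the other variables. Define $\phi\colon A \to A[U]$ by $\phi(x_i)=x_i$ for $1\le i\le m$, $\phi(z)=z$, $\phi(t)=t$, and $\phi(y)=y + (x_1^{r_1}\cdots x_m^{r_m})^{?}U$ — here one must be slightly careful: the point is that $A \hookrightarrow A[x_1^{-1}]$ and over the localized ring $y$ is a polynomial expression in $z,t$ (and the $x_i$'s, invertible $x_1$), so $A[x_1^{-1}] = k[x_1,x_1^{-1},x_2,\ldots,x_m,z,t]$ is a Laurent-polynomial ring in which the $\Ga_a$-action fixing $x_i,z,t$ and translating $y$ by a scalar multiple of a suitable unit is manifestly an exponential map. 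I would first verify that $A[x_1^{-1}] \cong k[x_1^{\pm 1},x_2,\ldots,x_m,z,t]$, define the translation exponential map there, and then check it restricts to $A$, i.e. that $\phi(A)\subseteq A[U]$. The restriction is the step needing genuine checking: one needs $\phi(y)\in A[U]$, which forces the coefficient of $U$ (and higher powers, but here the map is a simple translation so only degree one in $U$ appears) to lie in $A$, not merely in $A[x_1^{-1}]$. Taking $\phi(y) = y + x_1^{r_1}\cdots x_m^{r_m}\,U$ works: then $\phi(x_1^{r_1}\cdots x_m^{r_m}y) = x_1^{r_1}\cdots x_m^{r_m}y + (x_1^{r_1}\cdots x_m^{r_m})^2 U$, which is not equal to $F = x_1^{r_1}\cdots x_m^{r_m}y$ — so this naive choice does \emph{not} preserve the defining relation. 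Hence the correct exponential map must translate $y$ by a multiple of the unit $x_1^{-r_1}\cdots$, which a priori lands outside $A$; one instead shifts several variables simultaneously. So the real plan is: translate $y$ by $U$ and simultaneously adjust $z$ (or $t$) so that $F$ changes by exactly $x_1^{r_1}\cdots x_m^{r_m}U$.

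Thus I would look for an exponential map of the form $\phi(y) = y+U$, $\phi(z) = z + (\text{something})U + \cdots$, $\phi(t)=t$, $\phi(x_i)=x_i$, designed so that $\phi$ respects $x_1^{r_1}\cdots x_m^{r_m}y = F(x_1,\ldots,x_m,z,t)$; here one uses that $F$, viewed in the variable $z$ say, allows one to solve formally. Since this may only fix $t$ and the $x_i$ but move $z$, a single such map gives $k[x_1,\ldots,x_m,t]\subseteq\dk(A)$; by symmetry (or by choosing the analogous map in the $t$-direction) one also gets $z\in\dk(A)$, and combining, $k[x_1,\ldots,x_m,z,t]\subseteq\dk(A)$. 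Concretely, I would use the fact (from \cite{adv}) that $A$ contains a polynomial subring in many of these variables and invoke the structure of $F$; alternatively, the cleanest route is: over $A[(x_1\cdots x_m)^{-1}]$ the ring is a Laurent polynomial ring $k[x_1^{\pm 1},\ldots,x_m^{\pm 1}, z, t]$ (solving for $y$), on which there are obvious $z$-translations and $t$-translations fixing all $x_i$; pull these back and check — using the explicit shape of $F$ and clearing denominators appropriately — that suitable such maps descend to $A$ with $x_1,\ldots,x_m,z,t$ (resp. with $z$ replaced, then $t$ replaced) among their invariants.

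The main obstacle, as the above indicates, is precisely the descent: ensuring the candidate exponential map on the localization actually maps $A$ into $A[U]$ and is non-trivial on $A$ (so that its invariant ring is a proper subring, as required by the definition of $\dk$). I expect this to reduce to a short computation with the relation $x_1^{r_1}\cdots x_m^{r_m}y = F$: one verifies that the prescribed images of $x_1,\ldots,x_m,y,z,t$ satisfy the same relation in $A[U]$ (so $\phi$ is a well-defined $k$-algebra homomorphism $A\to A[U]$), that $\epsilon_0\phi_U=\mathrm{id}$ and the cocycle identity $\phi_V\phi_U=\phi_{U+V}$ hold (both automatic for a translation-type map linear in $U$), and that $\phi(y)\ne y$ so $\phi$ is non-trivial. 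Granting all this, $x_1,\ldots,x_m,t$ lie in $A^{\phi}$, the analogous $t$-direction map gives $z$, and therefore $k[x_1,\ldots,x_m,z,t]\subseteq\dk(A)$, completing the proof.
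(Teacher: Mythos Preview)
Your overall strategy---produce two non-trivial exponential maps on $A$, one with $x_1,\ldots,x_m,z$ among its invariants and one with $x_1,\ldots,x_m,t$ among its invariants---is exactly the paper's approach (the maps $\phi_1,\phi_2$ appearing in the proof of \thref{ml}(a) are precisely what is needed). However, your explicit construction has the roles of $y$ and $z$ (or $t$) reversed, and as written it does not work.

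You propose $\phi(y)=y+U$, $\phi(x_i)=x_i$, $\phi(t)=t$, and then try to solve for $\phi(z)$ so that the defining relation is preserved. This forces
\[
F(x_1,\ldots,x_m,\phi(z),t)=F(x_1,\ldots,x_m,z,t)+x_1^{r_1}\cdots x_m^{r_m}U,
\]
and for generic $F$ (e.g.\ already for $F=Z^2$, or for any $F$ with $\deg_Z F=0$) this equation has no solution $\phi(z)\in A[U]$. So the map you write down simply does not exist in general, and the ``short computation'' you allude to cannot be carried out.

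The correct move is the opposite one: \emph{prescribe} the translation on $t$ (resp.\ $z$) and let $y$ follow. Define $\phi_1$ by $\phi_1(x_i)=x_i$, $\phi_1(z)=z$, $\phi_1(t)=t+x_1^{r_1}\cdots x_m^{r_m}U$, and
\[
\phi_1(y)=\frac{F(x_1,\ldots,x_m,z,\,t+x_1^{r_1}\cdots x_m^{r_m}U)}{x_1^{r_1}\cdots x_m^{r_m}}
=y+U\,v(x_1,\ldots,x_m,z,t,U)
\]
for some polynomial $v$ (Taylor-expand $F$ in the last variable; every term in the difference is divisible by $x_1^{r_1}\cdots x_m^{r_m}$). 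This visibly lands in $A[U]$, satisfies the exponential identities, and is non-trivial because $\phi_1(t)\neq t$ (note: non-triviality is \emph{not} witnessed by $y$---if $\deg_T F=0$ then $\phi_1(y)=y$---so your argument ``$\phi(y)\neq y$ so $\phi$ is non-trivial'' also needs to be rerouted). Then $k[x_1,\ldots,x_m,z]\subseteq A^{\phi_1}$. The analogous map $\phi_2$ translating $z$ by $x_1^{r_1}\cdots x_m^{r_m}U$ gives $k[x_1,\ldots,x_m,t]\subseteq A^{\phi_2}$, and the lemma follows.
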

	
	The following proposition of the second author is stated in \cite[Proposition 3.4(i)]{adv} under the hypothesis $\dk(A)=A$. 
	However, the proof uses only the consequence that $k[x_{1}, \ldots , x_{m},z,t] \subsetneqq \dk(A)$.
Below we quote the result under this modified hypothesis. 
	\begin{prop}\thlabel{dk}
	 Suppose that $k$ is infinite and $k[x_{1}, \ldots , x_{m},z,t] \subsetneqq \dk(A)$. 
	  Then there exist $Z_1, T_1 \in k[Z,T]$ and $a_0, a_1 \in k^{[1]}$ such that $k[Z,T]=k[Z_1,T_1]$ and $f(Z,T)=a_0(Z_1)+a_1(Z_1)T_1$.
\end{prop}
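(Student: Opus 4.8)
The plan is to manufacture a non-trivial exponential map from the hypothesis, degenerate $A$ to the simpler model ring
\[
A_0:=\frac{k[X_1,\dots,X_m,Y,Z,T]}{(X_1^{r_1}\cdots X_m^{r_m}Y-f(Z,T))},
\]
carry the exponential map over to $A_0$, and then read the shape of $f$ off the induced exponential map on $A_0$, which after inverting $x_1\cdots x_m$ becomes an exponential map on an affine plane over a Laurent polynomial ring. To start: since $k[x_1,\dots,x_m,z,t]\subsetneq\dk(A)$, there is, by definition of the Derksen invariant, a non-trivial exponential map $\phi$ on $A$ with $A^{\phi}\not\subseteq k[x_1,\dots,x_m,z,t]$; fix $u\in A^{\phi}\setminus k[x_1,\dots,x_m,z,t]$. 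By \thref{prope}, $A^{\phi}$ is factorially closed in $A$ and $\td_k A^{\phi}=m+1$.

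Next I would degenerate $A$ to $A_0$. Since $F(0,\dots,0,Z,T)=f$ we have $F\equiv f\pmod{(x_1,\dots,x_m)}$, so the proper $\mathbb Z$-filtration of $A$ attached to the weights $\w(x_i)=-1$ $(1\le i\le m)$, $\w(z)=\w(t)=0$, $\w(y)=r_1+\cdots+r_m$ has, as the highest homogeneous summand of the defining relation $x_1^{r_1}\cdots x_m^{r_m}y-F$, exactly $x_1^{r_1}\cdots x_m^{r_m}y-f(z,t)$; one checks this filtration is admissible, $\gr(A)\cong A_0$, and the leading forms of $x_i,z,t,y$ are $X_i,Z,T,Y$. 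By \thref{dhm}, $\phi$ induces a non-trivial homogeneous exponential map $\overline{\phi}$ on $A_0$ with $\rho(A^{\phi})\subseteq A_0^{\overline{\phi}}$. Viewing $A_0$ as a $k[X_1,\dots,X_m,Z,T]$-module, we have
\[
A_0=k[X_1,\dots,X_m,Z,T]\ \oplus\ \bigoplus_{j\ge1}\bigl(k[X_1,\dots,X_m,Z,T]/(X_1^{r_1}\cdots X_m^{r_m})\bigr)Y^{j},
\]
and a suitable choice of $u$ (or a slight refinement of the weights) ensures that the nonzero $Y^{j}$-component of $\rho(u)$ persists, so $A_0^{\overline{\phi}}\not\subseteq k[X_1,\dots,X_m,Z,T]$ as well. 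It therefore suffices to prove the proposition for $A_0$, and from now on I take $F=f(Z,T)$, writing $x_i,y,z,t$ for the images of the variables.

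On $A_0$ the first task is the rigidity statement $x_1,\dots,x_m\in A_0^{\overline{\phi}}$; I would obtain it by one more homogenisation, this time of $f$, together with the factorial closedness of $A_0^{\overline{\phi}}$ and a direct computation on the monomial hypersurface $k[X_1,\dots,X_m,Y,Z,T]/(X_1^{r_1}\cdots X_m^{r_m}Y-(\text{leading form of }f))$, in the spirit of the Makar-Limanov computations for Danielewski-type surfaces. Granting it, $k[x_1,\dots,x_m]\subseteq A_0^{\overline{\phi}}$, so by \thref{prope} the map $\overline{\phi}$ induces a non-trivial exponential map on $S^{-1}A_0$, where $S:=k[x_1,\dots,x_m]\setminus\{0\}$; since $x_1^{r_1}\cdots x_m^{r_m}$ becomes a unit, $S^{-1}A_0=K_1[z,t]$ with $K_1:=k(x_1,\dots,x_m)$, the invariant ring being $S^{-1}(A_0^{\overline{\phi}})$. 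By \thref{aeh} this invariant ring is $K_1[q]$ for some $q$, and $K_1[z,t]=K_1[q]^{[1]}$. Now $u\in A_0^{\overline{\phi}}\subseteq K_1[q]$ while $u\notin k[x_1,\dots,x_m,z,t]$; writing $u=\sum_{j\ge0}c_j y^{j}$ in reduced form (so $x_1^{r_1}\cdots x_m^{r_m}\nmid c_j$ for $j\ge1$) with $c_{j_0}\neq0$ for some $j_0\ge1$ and clearing the unit denominator $(x_1^{r_1}\cdots x_m^{r_m})^{j_0}$, one gets $P:=c_{j_0}f^{j_0}+x_1^{r_1}\cdots x_m^{r_m}(\cdots)\in k[x_1,\dots,x_m,z,t]\cap K_1[q]$. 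Analysing this relation — specialising the $x_i$ to generic scalars so that $q$ becomes a coordinate of $k[z,t]$, and isolating the contribution of the $x$-free part so that it reads as a divisibility condition on $f$ — forces $f$, after a $k$-automorphism $k[Z,T]=k[Z_1,T_1]$, into the form $a_0(Z_1)+a_1(Z_1)T_1$. A descent argument, using that $k$ is infinite and \thref{sepco}, brings the conclusion, established first over $K_1$ (or over some separable extension of $k$), back to $k$ itself.

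The step I expect to be the real obstacle is the last one: passing from the bare existence of a coordinate $q$ of the plane $K_1[z,t]$ that is $\overline{\phi}$-invariant and ``entangled'' with $y=f(z,t)/(x_1^{r_1}\cdots x_m^{r_m})$ to the explicit triangular normal form of $f$. This is where essentially all of the genuine computation (and case analysis) lives; it amounts to controlling how a variable of $K_1^{[2]}$ can meet the subring $k[z,t]$ under the constraint imposed by the monomial denominator $x_1^{r_1}\cdots x_m^{r_m}$. The rigidity $x_1,\dots,x_m\in A_0^{\overline{\phi}}$ and the faithfulness of the degeneration to $A_0$ (that $A_0^{\overline{\phi}}$ stays strictly larger than $k[x_1,\dots,x_m,z,t]$) are secondary but still delicate points.
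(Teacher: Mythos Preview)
The paper does not prove this proposition: it is quoted (with a minor relaxation of the hypothesis) from \cite[Proposition~3.4(i)]{adv}, and no argument is supplied. So there is no proof in the present paper to compare your outline against; what follows concerns only your sketch.

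Your global strategy---degenerate $A$ to $A_0$ via an admissible filtration, carry the exponential map over by \thref{dhm}, establish $x_1,\dots,x_m\in A_0^{\overline\phi}$, invert them to reach a polynomial plane over $K_1=k(x_1,\dots,x_m)$, and then extract the triangular shape of $f$---is the natural one and, as far as the surrounding text indicates, is the route taken in \cite{adv}. But the outline is, as you yourself flag, incomplete at the two substantive steps. The rigidity claim $x_1,\dots,x_m\in A_0^{\overline\phi}$ is not simply ``one more homogenisation'': it requires a real case analysis on the leading form of $f$ together with factorial closedness, and you have not supplied the mechanism. The passage from an invariant coordinate of $K_1[z,t]$ to the normal form $f=a_0(Z_1)+a_1(Z_1)T_1$ over $k$ is where the hypothesis that $k$ is infinite is actually consumed (through specialisation of the $x_i$ to generic scalars in $k$, not merely through \thref{sepco}); your sketch names the ingredients but does not assemble them.

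There is also a smaller gap earlier. Your single filtration with weights $\w(x_i)=-1$ need not preserve the $y$-dependence of $u$ under $\rho$: the leading form of $u$ could fall back into $k[x_1,\dots,x_m,z,t]$. The paper's own \thref{fdk1}, which carries out exactly this reduction to $A_0$, needs a \emph{two-stage} filtration---first $(-1,0,\dots,0)$ to force $\widetilde y\mid\widetilde h$, then $(-1,\dots,-1)$---precisely to guarantee $\widehat y\in\widehat A^{\widehat\phi}$. Your parenthetical ``or a slight refinement of the weights'' hides a genuine argument.
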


  \begin{rem}\thlabel{r2}
  \em{	For $m=1$, the hypothesis that $k$ is an infinite field in the above proposition can be dropped (cf. \cite[Proposition 3.7]{com}).}
  \end{rem}

The next result shows that if $f$ is a non-trivial line, then $\dk(A)=k[x_1,\ldots,x_m,z,t]$.

\begin{prop}\thlabel{r1}
Suppose that $k[x_1,\ldots,x_m,z,t] \subsetneqq \dk(A)$ and $k[Z,T]/(f)=k^{[1]}$. Then $k[Z,T]=k[f]^{[1]}$.
\end{prop}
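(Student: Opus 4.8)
The plan is to combine \thref{dk} with a standard argument using Theorem~\ref{aeh} (the result of Abhyankar--Eakin--Heinzer) after a suitable base change to an infinite field. First, since the hypotheses and the conclusion are both stable under replacing $k$ by a separable algebraic extension (for the conclusion, this is precisely \thref{sepco}; for the hypothesis $k[Z,T]/(f)=k^{[1]}$ one checks it is preserved, and $k[x_1,\ldots,x_m,z,t] \subsetneqq \dk(A)$ is preserved because an exponential map giving a witness element persists after the faithfully flat base change), I would first reduce to the case where $k$ is infinite — for instance by passing to a purely transcendental, hence separable, extension $k(u)$, or to the separable closure; one must check that the strict inclusion into the Derksen invariant is not destroyed, which is the one slightly delicate bookkeeping point.

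Once $k$ is infinite, \thref{dk} applies and yields a change of coordinates $k[Z,T]=k[Z_1,T_1]$ together with $f(Z,T)=a_0(Z_1)+a_1(Z_1)T_1$ with $a_0,a_1 \in k[Z_1]$. Now I would use the hypothesis that $f$ is a line: $k[Z,T]/(f)=k[Z_1,T_1]/(a_0(Z_1)+a_1(Z_1)T_1)=k^{[1]}$. Consider the subring $R := k[Z_1, a_0(Z_1)+a_1(Z_1)T_1] = k[Z_1, f] \subseteq k[Z_1,T_1]$. Modulo $(f)$ this maps onto $k[Z_1] \subseteq k[Z_1,T_1]/(f) = k^{[1]}$; since $k[Z_1]$ is a polynomial ring in one variable sitting inside $k^{[1]}$ with transcendence degree one, Theorem~\ref{aeh} forces the image to be all of $k[Z_1,T_1]/(f)$, i.e. $k[Z_1,T_1] = k[Z_1] + (f)$. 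This means that over the localization (or directly) $T_1$ is congruent mod $f$ to a polynomial in $Z_1$; equivalently, $a_1(Z_1)$ must be a nonzero constant, since otherwise modulo a prime factor of $a_1$ the element $f$ degenerates and $k[Z_1,T_1]/(f)$ cannot be $k^{[1]}$ there. Concretely, if $a_1(Z_1)$ were a non-unit, pick a root (over $\bar k$) or an irreducible factor; specializing shows $k[Z,T]/(f)$ is not a polynomial ring, a contradiction. Hence $a_1 \in k^{*}$, and then $f = a_0(Z_1) + a_1 T_1$ is visibly a coordinate: $k[Z,T] = k[Z_1,T_1] = k[Z_1, f] = k[f]^{[1]}$.

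The step I expect to be the main obstacle is verifying that $a_1(Z_1)$ is forced to be a unit — that is, ruling out the possibility that $f = a_0(Z_1) + a_1(Z_1)T_1$ is a line without being a coordinate when $\deg a_1 \geq 1$. The clean way is: if $a_1(Z_1)$ has an irreducible factor $q(Z_1)$, then in $k[Z,T]/(f)$ we have $a_0(Z_1) \equiv -a_1(Z_1)T_1 \equiv 0 \pmod{q}$, so $q$ divides $a_0$ in the quotient's structure, and one analyzes the ring $k[Z_1,T_1]/(a_0 + a_1 T_1)$ directly: it is $k[Z_1][T_1]/(a_1 T_1 + a_0)$, which is $k[Z_1]$-finite only where $a_1$ is a unit, so its normalization or its behavior at the primes containing $a_1$ shows it cannot be $k^{[1]}$ unless $a_1 \in k^{*}$ (e.g. it would have a singular point or fail to be regular, whereas $k^{[1]}$ is regular). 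Alternatively — and perhaps more in the spirit of the paper — one invokes that a line of the special form $a_0(Z_1)+a_1(Z_1)T_1$ is automatically a coordinate, which is itself a known easy case (a "linear" line in $T_1$ over $k[Z_1]$ that is a line must have unit leading coefficient, by the one-variable Abhyankar--Moh/Russell phenomenon or just by degree count on the quotient). I would cite or reprove this minor fact and then conclude $k[Z,T]=k[f]^{[1]}$.
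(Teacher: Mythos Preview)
Your overall route coincides with the paper's: reduce to an infinite base field via a separable extension and descend with \thref{sepco}; over an infinite field, invoke \thref{dk} to put $f$ in the shape $a_0(Z_1)+a_1(Z_1)T_1$ and then argue that a line of this shape is already a coordinate. The paper is terser---for infinite $k$ it simply cites \cite[Proposition~3.4(ii)]{adv} for that last implication, and for finite $k$ it passes to the algebraic closure $\overline{k}$ (separable over a finite field) rather than to $k(u)$---so you are essentially reproving the cited result.

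One step in your write-up is wrong, however: the appeal to Theorem~\ref{aeh}. That theorem only tells you that a normal $k$-subalgebra of $k^{[n]}$ of transcendence degree one is \emph{abstractly} isomorphic to $k^{[1]}$; it does not force the inclusion $k[Z_1]\hookrightarrow k[Z_1,T_1]/(f)$ to be onto. (Think of $k[W^2]\subsetneq k[W]$.) So the deduction ``Theorem~\ref{aeh} $\Rightarrow k[Z_1,T_1]=k[Z_1]+(f)$'' fails, and with it the subsequent claim that $a_1$ must be a unit.

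The alternative you sketch afterwards is the correct replacement and should be promoted to the main argument; you also need to treat the case $a_1=0$ separately. If $a_1=0$, then $k[Z_1,T_1]/(f)=\big(k[Z_1]/(a_0)\big)[T_1]$, and this equals $k^{[1]}$ only when $a_0$ is linear, so $f$ is a coordinate. If $a_1\neq 0$, then since $k[Z,T]/(f)$ is a domain, $f$ is irreducible and hence $\gcd(a_0,a_1)=1$; consequently $k[Z_1,T_1]/(f)\cong k\big[Z_1,\,a_1(Z_1)^{-1}\big]$, whose unit group is $k^{*}$ exactly when $a_1\in k^{*}$, and in that case $k[Z_1,T_1]=k[Z_1,f]=k[f]^{[1]}$. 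With this fix (and noting, as the paper does, that the strict inclusion $k[x_1,\dots,x_m,z,t]\subsetneqq\dk(A)$ survives the base change because a witnessing exponential map extends), your proof is complete and matches the paper's.
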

\begin{proof}
	If $k$ is infinite, then the assertion follows from \cite[Proposition 3.4(ii)]{adv}. Now suppose $k$ is a finite field. Let $\overline{k}$ be an algebraic closure of $k$ and $\overline{A}=A \otimes_{k} \overline{k}$. Since $k[x_{1},\ldots,x_{m},z,t] \subsetneqq \dk(A)$, we have $\overline{k}[x_{1},\ldots,x_{m},z,t] \subsetneqq \dk(\overline{A})$. If $k[Z,T]/(f)=k^{[1]}$, then $\overline{k}[Z,T]/(f)=\overline{k}^{[1]}$. As $\overline{k}$ is infinite, we have $\overline{k}[Z,T]=\overline{k}[f]^{[1]}$ and hence by \thref{sepco}, we get that $k[Z,T]=k[f]^{[1]}$.
\end{proof}

In the rest of this section we will consider a Noetherian ring $R$ and 
note some $K$-theoretic aspects of $R$ (cf. \cite{bass}, \cite{bgl}).
Let $\mathscr{M}
(R)$ denote the category of finitely generated $R$-modules and
$\mathscr{P}(R)$ the category of finitely generated projective $R$-modules. Let $G_{0}(R)$ 
and $G_{1}(R)$ respectively denote the {\it Grothendieck group} and the {\it Whitehead group} 
of the category  $\mathscr{M}(R)$. Let $K_{0}(R)$ and $K_{1}(R)$ respectively denote the {\it Grothendieck group} 
and the {\it Whitehead group} of the category  $\mathscr{P}(R)$. For $i \geqslant 2$, the definitions of $G_{i}(R)$ and $K_{i}(R)$ 
can be found in (\cite{sr}, Chapters 4 and 5). The following results can be found in \cite[Propositions 5.15 and 5.16, Theorem 5.2]{sr}.

\begin{lem}\thlabel{lexact}
	Let $t$ be a regular element of $R$. Then the inclusion map $j:R \hookrightarrow R[t^{-1}]$ and the natural surjection map $\pi: R \rightarrow \frac{R}{tR}$ induce the following long exact sequence of groups:
	\begin{equation*}
		\begin{tikzcd}
			\arrow[r] &G_{i}(\frac{R}{tR}) \arrow[r, "\pi_{*}"] &G_{i}(R) \arrow[r, "j^{*}"] &G_{i}(R[t^{-1}]) \arrow[r] &\cdots \arrow[r] &G_{0}(R[t^{-1}]) \arrow[r] &0
		\end{tikzcd}
	\end{equation*}
	
\end{lem}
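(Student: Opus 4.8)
The plan is to deduce this from the standard localization sequence in algebraic $K$-theory (equivalently $G$-theory, since we are working with the categories $\mathscr{M}$ of finitely generated modules). First I would recall the general machinery: for a Noetherian ring $R$ and a multiplicatively closed set $S$, there is a long exact localization sequence relating $G_\bullet$ of the category of $S$-torsion modules, $G_\bullet(R)$, and $G_\bullet(S^{-1}R)$. In our situation $S = \{1, t, t^2, \dots\}$, so $S^{-1}R = R[t^{-1}]$ and $R[t^{-1}]$-modules are exactly the $S$-local ones. The key identification is \emph{d\'evissage}: since $t$ is a regular (non-zerodivisor) element of $R$, the category of finitely generated $t$-torsion $R$-modules has the same $G$-theory as the category of finitely generated $R/tR$-modules, via the exact functor induced by $\pi\colon R \twoheadrightarrow R/tR$. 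This is precisely the content cited from \cite[Chapter~5]{sr} (Propositions~5.15 and~5.16, Theorem~5.2), so under the stated license to use earlier results I would simply invoke it.

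Concretely, the steps are: (1) Set $S = \{t^n : n \geq 0\}$ and observe $t$ regular implies $R \to R[t^{-1}]$ is a localization with $R \hookrightarrow R[t^{-1}]$ injective, and every finitely generated $R[t^{-1}]$-module is $S^{-1}M$ for some finitely generated $R$-module $M$. (2) Identify the Serre subcategory of $\mathscr{M}(R)$ annihilated by localization at $S$ as the category of finitely generated $t^\infty$-torsion modules; the quotient category is equivalent to $\mathscr{M}(R[t^{-1}])$. (3) Apply the localization theorem for $G$-theory (Quillen) to obtain a long exact sequence $\cdots \to G_i(\mathscr{M}_{t\text{-tors}}) \to G_i(R) \xrightarrow{j^*} G_i(R[t^{-1}]) \to G_{i-1}(\mathscr{M}_{t\text{-tors}}) \to \cdots$. (4) Apply d\'evissage along the nilpotent-type filtration by powers of $t$ (each subquotient is an $R/tR$-module since $t$ is regular) to get $G_i(\mathscr{M}_{t\text{-tors}}) \cong G_i(R/tR)$, with the connecting map realized by $\pi_*$. (5) Truncate the sequence at the right end using $G_i(R[t^{-1}]) = 0$ for $i < 0$ and the fact that $G_0(R[t^{-1}]) \to 0$ is the terminal segment, yielding exactly the displayed sequence ending in $G_0(R[t^{-1}]) \to 0$.

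I do not expect a genuine obstacle here: every ingredient (localization sequence, d\'evissage, the identification of the torsion category) is classical and, crucially, the excerpt explicitly authorizes citing \cite[Propositions~5.15, 5.16, Theorem~5.2]{sr}, which is where these statements live. The only points requiring a word of care are (a) checking that $t$ being a \emph{regular} element is exactly what makes the filtration $M \supseteq tM \supseteq t^2M \supseteq \cdots$ have $R/tR$-module subquotients so that d\'evissage applies, and (b) matching the maps in the abstract sequence with the concrete $\pi_*$ and $j^*$ in the statement --- both are routine and amount to unwinding the definitions of the functors inducing those maps. So the proof is essentially a citation assembled from the quoted references, and I would write it as: ``This is the localization--d\'evissage sequence of \cite[Propositions~5.15, 5.16 and Theorem~5.2]{sr} applied to the regular element $t$,'' followed by the two lines of justification above.
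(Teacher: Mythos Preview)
Your proposal is correct and matches the paper's treatment: the paper does not prove this lemma at all but simply records it as a citation (``The following results can be found in \cite[Propositions~5.15 and~5.16, Theorem~5.2]{sr}''). Your outline of localization plus d\'evissage is exactly the content behind that citation, so you have supplied more detail than the paper itself, but the approach is identical.
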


\begin{lem}\thlabel{fcom}
		Let $t$ be a regular element of $R$, $\phi: R \rightarrow C$ be a flat ring homomorphism and $u=\phi(t)$. Then we get the following commutative diagram:
		\begin{equation*}
			\begin{tikzcd}
			 \cdots\arrow[r] &G_{i}(\frac{R}{tR}) \arrow[r]\arrow[d,"\overline{\phi}^{*}"] & G_{i}(R)\arrow[r]\arrow[d,"\phi^{*}"] & G_{i}(R[t^{-1}])\arrow{r}\arrow[d,"(t^{-1}\phi)^{*}"] & G_{i-1}(\frac{R}{tR}) \arrow{r}\arrow[d, "\overline{\phi}^{*}"] &\cdots  \\
			 \cdots\arrow[r] &G_{i}(\frac{C}{uC}) \arrow[r] & G_{i}(C)\arrow[r] &G_{i}(C[u^{-1}]) \arrow{r}  & G_{i-1}(\frac{C}{uC}) \arrow{r} &\cdots ,
			\end{tikzcd}
		\end{equation*}
	where $\phi $ induces the vertical maps.
\end{lem}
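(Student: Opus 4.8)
The plan is to exhibit both rows as the long exact homotopy sequences of the fibration sequences furnished by Quillen's localization theorem in $G$-theory, and then to check that flat base change along $\phi$ induces a morphism between these fibration sequences. As a preliminary, note that flatness of $\phi$ forces multiplication by $u=\phi(t)$ on $C$ to be injective, so $u$ is a regular element of $C$ and the bottom row is a legitimate instance of \thref{lexact}; moreover $C/uC\cong C\otimes_{R}(R/tR)$ and $C[u^{-1}]\cong C\otimes_{R}R[t^{-1}]$, and the ring maps $\overline{\phi}\colon R/tR\to C/uC$ and $t^{-1}\phi\colon R[t^{-1}]\to C[u^{-1}]$ are flat, being base changes of $\phi$. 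Consequently the three base-change functors $\phi^{*}=-\otimes_{R}C$, $\overline{\phi}^{*}=-\otimes_{R/tR}(C/uC)$ and $(t^{-1}\phi)^{*}=-\otimes_{R[t^{-1}]}C[u^{-1}]$ are \emph{exact} functors $\mathscr{M}(R)\to\mathscr{M}(C)$, $\mathscr{M}(R/tR)\to\mathscr{M}(C/uC)$ and $\mathscr{M}(R[t^{-1}])\to\mathscr{M}(C[u^{-1}])$, and therefore induce the vertical homomorphisms on the groups $G_{i}$.

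Next I would recall how the rows arise. Let $\mathscr{M}_{t}(R)\subseteq\mathscr{M}(R)$ be the Serre subcategory of finitely generated $R$-modules annihilated by some power of $t$. Quillen's localization theorem produces a homotopy fibration sequence of $K$-theory spectra with terms $K(\mathscr{M}_{t}(R))$, $K(\mathscr{M}(R))$ and $K\big(\mathscr{M}(R)/\mathscr{M}_{t}(R)\big)$; d\'evissage identifies $K(\mathscr{M}_{t}(R))$ with $K(\mathscr{M}(R/tR))=G(R/tR)$ through the inclusion of modules killed by $t$, and the localization functor $M\mapsto M[t^{-1}]$ identifies the Serre quotient $\mathscr{M}(R)/\mathscr{M}_{t}(R)$ with $\mathscr{M}(R[t^{-1}])$. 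The associated long exact sequence of homotopy groups is precisely the top row of \thref{lexact}, and similarly for the bottom row with $(R,t)$ replaced by $(C,u)$.

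The main step is then naturality. The functor $\phi^{*}$ carries $\mathscr{M}_{t}(R)$ into $\mathscr{M}_{u}(C)$ and commutes with the inclusions into $\mathscr{M}(R)$ and $\mathscr{M}(C)$; restricted to $t$-torsion modules it agrees, up to natural isomorphism, with $\overline{\phi}^{*}$, since $N\otimes_{R}C\cong N\otimes_{R/tR}(C/uC)$ for every $R/tR$-module $N$, and this is compatible with the d\'evissage identifications; and it descends to the Serre quotients, where it corresponds to $(t^{-1}\phi)^{*}$ because base change commutes with localization. Hence $\phi$ induces a morphism of homotopy fibration sequences of $K$-theory spectra, and passing to homotopy groups yields exactly the asserted commutative ladder. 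The one point requiring genuine care is the verification that the d\'evissage and Serre-quotient identifications are natural with respect to flat base change; this is the technical heart of the argument and is carried out in \cite[Chapter 5]{sr}, the precise statement being \cite[Propositions 5.15 and 5.16, Theorem 5.2]{sr}.
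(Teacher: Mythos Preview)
Your sketch is correct and, in fact, more detailed than what the paper does: the paper gives no proof at all for this lemma, simply recording it (together with \thref{lexact} and \thref{split}) as a quotation from \cite[Propositions 5.15 and 5.16, Theorem 5.2]{sr}. Your outline via Quillen's localization theorem for the Serre subcategory $\mathscr{M}_{t}(R)\subseteq\mathscr{M}(R)$, d\'evissage, and naturality of flat base change is exactly the standard argument behind that citation, and you correctly conclude by pointing to the same reference.
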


\begin{lem}\thlabel{split}
For  an indeterminate $T$ over $R$, the following hold:
	
	\begin{itemize}
		\item [\emph{(a)}] For every $i \geqslant 0$, the maps $G_{i}(R) \rightarrow G_{i}(R[T])$, which are induced by $R \hookrightarrow R[T]$, are isomorphisms.
		
		\item[\emph{(b)}]   
		Let $j: R \rightarrow R[T,T^{-1}]$ be the inclusion map. Then for every $i$, $i \geqslant 1$, the following sequence is split exact. 
	$$
\begin{tikzcd}
0 \arrow[r] & G_{i}(R[T]) \arrow[r, "j^{*}"] &G_{i}(R[T,T^{-1}]) \arrow[r] & G_{i-1}(R) \arrow[r] &0
\end{tikzcd}.
$$
In particular, for $i=0$, $j^{*}$ is an isomorphism and for $i \geqslant 1$, $G_{i}(R[T,T^{-1}]) \cong G_{i}(R[T]) \oplus G_{i-1}(R) \cong G_{i}(R) \oplus G_{i-1}(R) $.
	\end{itemize}

\end{lem}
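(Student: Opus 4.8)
The plan is to obtain (a) as an instance of the homotopy invariance of $G$-theory and then to deduce (b) by feeding this into the localization sequence of \thref{lexact} applied to the regular element $T$ of the polynomial ring $R[T]$. For (a): the inclusion $\iota\colon R\hookrightarrow R[T]$ is flat, so base change $M\mapsto M\otimes_R R[T]$ is an exact functor $\mathscr{M}(R)\to\mathscr{M}(R[T])$ and induces homomorphisms $\iota^*\colon G_i(R)\to G_i(R[T])$ for all $i\ge 0$. I would argue that these are isomorphisms by Quillen's homotopy argument: Noetherian induction on $\Spec R$ through the localization sequence reduces the problem to the case where $R$ is a domain, and there one compares $\mathscr{M}(R[T])$ with the subcategory of $R[T]$-modules extended from $R$ by d\'evissage. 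For the purposes of this paper it is enough to quote this as the $G$-theoretic homotopy theorem recorded in \cite{sr}.

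For (b): apply \thref{lexact} with $R$ replaced by $R[T]$ and the regular element taken to be $T$. Since $R[T]/(T)\cong R$ and $R[T][T^{-1}]=R[T,T^{-1}]$, this produces the long exact sequence
$$
\cdots\to G_i(R)\xrightarrow{\pi_*}G_i(R[T])\xrightarrow{j^*}G_i(R[T,T^{-1}])\xrightarrow{\partial}G_{i-1}(R)\xrightarrow{\pi_*}G_{i-1}(R[T])\to\cdots,
$$
where $\pi_*$ is the transfer along the closed immersion $\Spec R\hookrightarrow\Spec R[T]$ cut out by $T$. The crucial step is that $\pi_*=0$. On $G_0$ this is visible from the resolution $0\to R[T]\otimes_R M\xrightarrow{\,\cdot T\,}R[T]\otimes_R M\to M\to 0$ of an $R$-module $M$ regarded as an $R[T]/(T)$-module, which gives $\pi_*[M]=0$; in general, composing with the isomorphism $\iota^*$ of part (a) and invoking the self-intersection formula, $\iota^*\pi_*$ is multiplication by the Koszul class $1-[N^\vee]\in K_0(R)$ of the conormal module $N^\vee$ of $\{T=0\}$, and this is zero since $N^\vee$ is free of rank one; hence $\pi_*=0$. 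Consequently the long exact sequence breaks into the short exact sequences
$$
0\to G_i(R[T])\xrightarrow{j^*}G_i(R[T,T^{-1}])\xrightarrow{\partial}G_{i-1}(R)\to 0\qquad(i\ge 1),
$$
and for $i=0$ it shows that $j^*$ is an isomorphism.

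It remains to split the sequence for $i\ge 1$, and for this I would use the $K_*$-module structure on $G_*$: let $\{T\}\in K_1(R[T,T^{-1}])$ denote the class of the unit $T$ and set $s(\alpha)=\{T\}\cdot\widetilde{\alpha}$ for $\alpha\in G_{i-1}(R)$, where $\widetilde{\alpha}$ is the image of $\alpha$ under base change $R\hookrightarrow R[T,T^{-1}]$. Applying the projection formula for the boundary map, together with $\partial\{T\}=[\mathcal{O}_{\{T=0\}}]=1\in G_0(R)$ and the fact that the composite $R\to R[T]\to R[T]/(T)=R$ is the identity, one finds $\partial\circ s=\pm\,\mathrm{id}$, so $s$ is the desired section. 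Therefore $G_i(R[T,T^{-1}])\cong G_i(R[T])\oplus G_{i-1}(R)$, and part (a) upgrades this to $G_i(R[T,T^{-1}])\cong G_i(R)\oplus G_{i-1}(R)$. (Alternatively, one may simply invoke Bass's fundamental theorem for the $G$-theory of Laurent polynomial rings.)

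The hard part is part (a): homotopy invariance of $G$-theory is a genuinely deep theorem of Quillen, whereas everything in (b) --- the vanishing of $\pi_*$ and the construction of the splitting --- is then formal, resting only on \thref{lexact}, the self-intersection formula, and products in $K$-theory rather than on any ad hoc computation. A reader content with references will find both (a) and the splitting already in \cite{sr}, so the only assembly required is the application of \thref{lexact} to the element $T$ of $R[T]$.
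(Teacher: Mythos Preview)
The paper does not prove this lemma: it is quoted, together with \thref{lexact} and \thref{fcom}, from \cite[Propositions~5.15 and~5.16, Theorem~5.2]{sr}. So there is no argument in the paper to compare against; you have essentially written out what the cited reference contains.

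Your outline is the standard one and is correct, with one small wrinkle of phrasing. In the step showing $\pi_*=0$ for $i\ge 1$, the composite ``$\iota^*\pi_*$'' does not typecheck, since both $\iota^*$ and $\pi_*$ are maps $G_i(R)\to G_i(R[T])$. The clean way to say what you mean is to apply Quillen's additivity theorem to the short exact sequence
\[
0\longrightarrow M\otimes_R R[T]\xrightarrow{\ \cdot T\ } M\otimes_R R[T]\longrightarrow M\longrightarrow 0
\]
of exact functors $\mathscr{M}(R)\to\mathscr{M}(R[T])$, which yields $\pi_*=\iota^*-\iota^*=0$ on every $G_i$ without invoking a self-intersection formula. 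The rest --- feeding $T\in R[T]$ into \thref{lexact}, and splitting via cup product with the unit class $\{T\}\in K_1(R[T,T^{-1}])$ --- matches the treatment in \cite{sr}.
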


\begin{rem}\thlabel{rmk1}
	\emph{For a regular ring $R$, $G_{i}(R)=K_{i}(R)$, for every $i \geqslant 0$. In particular, $G_{1}(k[X])=K_{1}(k[X])=k^{*}$ and $G_{0}(k)=K_{0}(k)=\mathbb{Z}$. For $C_{m}:=k[X_{1}, \ldots , X_{m},X_{1}^{-1},\ldots,X_{m}^{-1}]$, $G_{0}(C_{m})=K_{0}(C_{m})=\mathbb{Z}$ (since every finitely generated projective module over $C_{m}$ is free) and by repeated application of \thref{split}(b), we get that $G_{1}(C_{m}) \cong k^{*} \oplus \mathbb{Z}^{m}$, for every $m\geqslant 1$. }
\end{rem}

\section{On Theorem B}

In this section we will prove an extended version of Theorem B.
For convenience, we first record an observation.                                                               

\begin{lem}\thlabel{rsco}
Let $R$ be an integral domain.
Let $E:=R[X_{1}, \ldots, X_{m}, T]$, $C= E[Z,Y]$, $g \in E[Z]$  and 
$G=X_{1}^{r_{1}} \cdots X_{m}^{r_{m}}Y- X_{1} \cdots X_{m}g+Z \in C$.
Then $C=E[G]^{[1]}$. 
\end{lem}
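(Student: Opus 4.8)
The plan is to show directly that $Z$ can be replaced, as a polynomial generator over $E$, by $G$; once that is done, $C = E[Z,Y] = E[G][Y]$ gives the claim. First I would observe that in the ring $C = E[Z,Y]$ we have
$$
Z = G - X_{1}^{r_{1}} \cdots X_{m}^{r_{m}}Y + X_{1}\cdots X_{m}\,g.
$$
Since $g \in E[Z]$, the element $X_{1}\cdots X_{m}\,g$ lies in $E[Z]$, so this expresses $Z$ in terms of $G$, $Y$, elements of $E$, and $Z$ itself — which is not yet a valid substitution. To make it work I would instead view things degree by degree in $Y$: write $g = g(X_{1},\dots,X_{m},Z) \in E[Z]$, and consider the $E$-algebra automorphism approach below.

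The cleanest route is to exhibit an explicit $E$-algebra automorphism of $C$ sending $Z \mapsto G$. Define $\psi : C = E[Z,Y] \to E[Z,Y]$ by $\psi(Y) = Y$ and $\psi(Z) = X_{1}^{r_{1}}\cdots X_{m}^{r_{m}}Y - X_{1}\cdots X_{m}\,g(X_{1},\dots,X_{m},Z) + Z = G$. This is a well-defined $E$-algebra homomorphism since $E[Z,Y]$ is a polynomial ring over $E$ in $Z,Y$ and we may send the generators anywhere. It is an automorphism because it is \emph{triangular}: $\psi$ fixes $Y$, and modulo the ideal generated by the "lower" variable structure, $\psi(Z) = Z + (\text{a polynomial in } E[Z,Y] \text{ not involving } Z \text{ beyond what is already recorded})$ — more precisely, $\psi(Z) - Z = X_{1}^{r_{1}}\cdots X_{m}^{r_{m}}Y - X_{1}\cdots X_{m}\,g(X_{1},\dots,X_{m},Z)$ has the form $h(Z,Y)$ with the property that the inverse is obtained by the substitution $Z \mapsto Z - X_{1}^{r_{1}}\cdots X_{m}^{r_{m}}Y + X_{1}\cdots X_{m}\,g(\dots, Z)$ iterated; but since $\psi(Z)$ is degree $1$ in $Z$ with leading coefficient $1$ (the term $+Z$), one checks that the map $Z \mapsto \psi(Z)$, $Y \mapsto Y$ has a two-sided inverse. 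Concretely, set $\sigma(Y)=Y$, $\sigma(Z) = Z - X_{1}^{r_{1}}\cdots X_{m}^{r_{m}}Y + X_{1}\cdots X_{m}\,\tilde g$ where $\tilde g$ is determined recursively; I would verify $\sigma\psi = \mathrm{id}$ by substitution.

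Here is the argument phrased to avoid the recursion entirely. Set $W := G = X_{1}^{r_{1}}\cdots X_{m}^{r_{m}}Y - X_{1}\cdots X_{m}\,g(X_{1},\dots,X_{m},Z) + Z \in C$. I claim $C = E[W,Y]$ as an $E$-algebra, with $W,Y$ algebraically independent over $E$. Independence: a nontrivial relation $\sum a_{ij}W^{i}Y^{j}=0$ with $a_{ij}\in E$ would, upon substituting $X_{1}=\cdots=X_{m}=0$ (a ring homomorphism $E \to R[T]$, extended to $C \to R[T][Z,Y]$), become $\sum \bar a_{ij} Z^{i}Y^{j}=0$ since $W \equiv Z \pmod{(X_{1},\dots,X_{m})}$; running this over the localization at $X_1\cdots X_m$ and comparing, one forces all $a_{ij}=0$ — I would instead argue more simply that $\td$ and the fact that $W - Z \in (X_1\cdots X_m)C$ combined with $C$ being a domain of transcendence degree $m+3$ over $R$ pins it down. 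Generation: we must recover $Z$ from $E[W,Y]$; since $W = Z + X_{1}^{r_{1}}\cdots X_{m}^{r_{m}}Y - X_{1}\cdots X_{m}\,g(X_1,\dots,X_m,Z)$ is monic of degree $1$ in $Z$ over $E[Y]$, the subring $E[Y][W] = E[Y][Z]$ (a degree-$1$ monic polynomial in $Z$ generates the same algebra), so $Z \in E[W,Y]$, hence $C = E[Z,Y] = E[W,Y] = E[G][Y] = E[G]^{[1]}$.

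The only genuinely delicate point is the assertion that $E[Y][W] = E[Y][Z]$ inside $C$, i.e.\ that adjoining $W$ over $E[Y]$ is the same as adjoining $Z$. This is where I'd be most careful: it uses that $W$, as an element of the polynomial ring $(E[Y])[Z]$, has $Z$-degree exactly $1$ with unit leading coefficient ($=1$), because the term $X_{1}\cdots X_{m}\,g(X_1,\dots,X_m,Z)$ and the term $X_{1}^{r_{1}}\cdots X_{m}^{r_{m}}Y$ together with $+Z$ give a polynomial whose coefficient of $Z^{1}$ is $1 - X_1\cdots X_m\,(\partial\text{-type term})$ — wait, $g$ itself may have positive $Z$-degree, so $X_1\cdots X_m\, g$ can have $Z$-degree $>1$. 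So $W$ need not be degree $1$ in $Z$. The fix: work instead over the larger base $E[Y]$ and note $W - Z \in (X_1\cdots X_m)\cdot E[Z,Y] \subseteq E[Z,Y]$, and apply the substitution $Z \mapsto W$ only after localizing at $X_1\cdots X_m$ where it is clearly invertible (there $Z = (W - X_1^{r_1}\cdots X_m^{r_m}Y)/(\text{unit}) + \text{correction}$ — still not obviously polynomial). Therefore the safest and what I'd actually write is the automorphism argument: define $\Phi : E[Z,Y] \to E[Z,Y]$, $\Phi|_E = \mathrm{id}$, $\Phi(Y) = Y$, $\Phi(Z) = G$; define $\Psi : E[Z,Y] \to E[Z,Y]$, $\Psi|_E=\mathrm{id}$, $\Psi(Y)=Y$, $\Psi(Z) = Z + X_{1}\cdots X_{m}\,g(X_1,\dots,X_m,Z) - X_{1}^{r_{1}}\cdots X_{m}^{r_{m}}Y$; and verify $\Phi\Psi = \Psi\Phi = \mathrm{id}$ by direct substitution (both compositions fix $Y$ and send $Z$ to $Z$, because $g(X_1,\dots,X_m, Z)$ appears symmetrically and the $X_i$-monomial coefficients cancel). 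Then $\Phi$ is an $E$-automorphism with $\Phi(Z) = G$, so $E[G,Y] = \Phi(E[Z,Y]) = E[Z,Y] = C$, and $G, Y$ are algebraically independent over $E$ since $Z,Y$ are; hence $C = E[G]^{[1]}$. The main obstacle, then, is purely bookkeeping: checking that the composition $\Phi\Psi$ really is the identity despite $g$ being evaluated at different arguments in the two maps — I would handle this by noting $\Psi(Z) - Z$ and $\Phi(Z) - Z$ both lie in the ideal $(X_1\cdots X_m, Y)E[Z,Y]$ and running a short induction on the relevant degree, or simply by the observation that $\Phi$ and $\Psi$ are mutually inverse elementary automorphisms of the standard "Jonquières/triangular" type in the variable $Z$ over the fixed ground $E[Y]$.
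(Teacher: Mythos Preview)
Your proposal has a genuine gap: the maps $\Phi$ and $\Psi$ you write down are \emph{not} mutually inverse, and in fact $\Phi$ is not an automorphism of $C$ once $g$ has positive $Z$-degree. Take $m=1$, $r_1=2$, $g=Z$, so $G=X_1^2Y-X_1Z+Z$. Then
\[
\Phi\Psi(Z)=\Phi(Z+X_1Z-X_1^2Y)=\Phi(Z)(1+X_1)-X_1^2Y
= Z+X_1^2(X_1Y-Z)\neq Z.
\]
More conceptually, $\Phi$ being an automorphism would force $C=E[G,Y]$, i.e.\ $Z\in E[G,Y]$; but in the same example $(1-X_1)Z=G-X_1^2Y$ with $1-X_1$ a nonunit in $E$, and a quick degree comparison shows $Z\notin E[G,Y]$. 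So the claimed ``triangular/Jonqui\`eres'' structure is illusory: $\Phi(Z)-Z=-X_1\cdots X_m\,g(Z)+X_1^{r_1}\cdots X_m^{r_m}Y$ is not an element of $E[Y]$ when $\deg_Z g\ge 1$, hence $\Phi$ is not triangular in $Z$ over $E[Y]$. You correctly spotted this obstruction earlier in your write-up, but the ``fix'' reintroduces the same error.

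The paper avoids this by not trying to name the second variable at all. It sets $D=E[G]$, takes $S$ to be the multiplicative set generated by $X_1,\dots,X_m$, and checks two things: (a) after inverting $S$ one can solve for $Y$ to get $S^{-1}C=(S^{-1}D)[Z]=(S^{-1}D)^{[1]}$; (b) modulo each $X_i$ one has $G\equiv Z$, so $C/X_iC=(D/X_iD)[Y]=(D/X_iD)^{[1]}$. Then the Russell--Sathaye criterion (\thref{rs}) upgrades these local/fibrewise statements to $C=D^{[1]}$. The point is that the complementary generator over $E[G]$ is neither $Y$ nor $Z$ in general; Russell--Sathaye produces it abstractly without having to write it down.
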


\begin{proof}

Let $D=E[G] \subseteq C$. Let $S$ be the multiplicatively closed subset of $D\setminus \{0\}$, 
which is generated by $X_{1},\ldots,X_{m}$. From the expression of $G$, it is clear that $S^{-1}C=(S^{-1}D)[Z]$.
Again $\frac{C}{X_{i}C}=(\frac{D}{X_{i}D})^{[1]}$ for every $i, 1 \leqslant i \leqslant m$. 
Therefore, by \thref{rs}, we get that $C=D^{[1]}$. 
\end{proof}

We now fix some notation. Throughout this section, 
 $A$ will denote the ring as in (\ref{A}), i.e., 
 	\begin{equation}\label{AG}
 	A= \frac{k\left[ X_{1}, \ldots , X_{m}, Y,Z,T\right]}{(X_{1}^{r_{1}} \cdots X_{m}^{r_{m}}Y -F(X_{1}, \ldots , X_{m}, Z,T))},\,\ r_i > 1 \text{~for all~} i, 1 \leqslant i \leqslant m,
 \end{equation}
where  $f(Z,T):=F(0, \ldots, 0, Z,T) \neq ~0$. Let $G:= X_{1}^{r_{1}} \cdots X_{m}^{r_{m}}Y -F$. 
Note that $A$ is an integral domain. 
Recall that ${x_{1}, \ldots, x_{m}, y, z,t}$ denote the images of 
${X_{1}, \ldots , X_{m},Y,Z,T}$ in $\mathbf{A}$.
$E$ and $B$ will denote the following subrings of $A$:
$$
{\mathbf{E=k[x_{1},\ldots,x_{m}]} \text{~and~} \mathbf{B=k[x_1,\ldots,x_m,z,t]}.}
$$ 
 Note that
$$
B=k[x_{1}, \ldots, x_{m},z,t] \hookrightarrow A \hookrightarrow B[x_{1}^{-1},\ldots,x_{m}^{-1}].  
$$
Fix $(e_{1}, \ldots, e_{m}) \in \mathbb{Z}^{m}$. The ring $B[x_{1}^{-1}, \ldots, x_{m}^{-1}]$ 
can be given the following $\mathbb{Z}$-graded structure:
$$
B[x_{1}^{-1},\ldots,x_{m}^{-1}] = \bigoplus_{n \in \mathbb{Z}} B_{n},
$$
where $B_{n} = \bigoplus_{(i_{1},\ldots,i_{m}) \in \mathbb{Z}^{m}, e_{1}i_{1}+\cdots+e_{m}i_{m}=n} k[z,t] x_{1}^{i_{1}} \cdots x_{m}^{i_{m}}$. Now every $b \in B[x_{1}^{-1},\ldots,x_{m}^{-1}]$ can be written uniquely as $b=\sum_{i=d_{l}}^{d_{h}} b_{i}$, where $b_{i} \in B_{i}$, $d_{l}, d_{h}$ are some integers and $b_{d_{l}}, b_{d_{h}} \neq 0$. If $b \in B$, then each $b_{i} \in B$. We call $d_{h}$ the degree of $b$ and hence $b_{d_{h}}$ is the highest degree homogeneous summand of $b$.
For every $n \in \mathbb{Z}$, if $A_{n} = \bigoplus_{i \leqslant n} B_{n} \cap A$, then $\{A_{n}\}_{n \in \mathbb{Z}}$ defines a $\mathbb{Z}$-filtration on $A$. For every $j, 1 \leqslant j \leqslant m$, $x_{j} \in A_{e_{j}} \setminus A_{e_{j}-1}$. 
If $d$ denotes the degree of $F(x_{1}, \ldots , x_{m},z,t)$, then $y \in A_{\l} \setminus A_{\l-1}$, where $\l=d-(r_{1}e_{1}+\cdots+r_{m}e_{m})$. 

\smallskip
With respect to the notation as above, we quote two lemmas \cite[Lemmas 3.1, 3.2]{adv}.

\begin{lem}\thlabel{ad}
	The $k$-linear subspaces $\{A_{n}\}_{n \in \mathbb{Z}}$ define a proper 
admissible $\mathbb{Z}$-filtration on $A$ with the generating set $\Gamma =\{x_{1}, \ldots , x_{m},y,z,t\}$, and the associated graded ring $gr(A) =\bigoplus_{n \in \mathbb{Z}} \frac{A_{n}}{A_{n-1}}$ is generated by the image of $\Gamma$ in $gr(A)$. 
\end{lem}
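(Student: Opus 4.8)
The plan is to verify the four axioms of a proper $\mathbb{Z}$-filtration, then establish admissibility with $\Gamma=\{x_{1},\ldots,x_{m},y,z,t\}$, and finally read off the claim about $\gr(A)$. The first three axioms are routine: $A_{n}\subseteq A_{n+1}$ by construction; every nonzero element of $B[x_{1}^{-1},\ldots,x_{m}^{-1}]$, in particular of $A$, has a well-defined finite degree $d_{h}$ and hence lies in $A_{d_{h}}$, so $A=\bigcup_{n}A_{n}$; and such an element does not lie in $A_{d_{h}-1}$, so $\bigcap_{n}A_{n}=\{0\}$. For axiom (iv) the essential observation is that $B[x_{1}^{-1},\ldots,x_{m}^{-1}]=k[z,t][x_{1}^{\pm 1},\ldots,x_{m}^{\pm 1}]$ is a $\mathbb{Z}$-graded \emph{integral domain}: for nonzero $a,b$ of degrees $n,m$ the top homogeneous summand of $ab$ is the nonzero product of the top summands of $a$ and $b$, whence $\deg(ab)=n+m$ and $(A_{n}\setminus A_{n-1})(A_{m}\setminus A_{m-1})\subseteq A_{n+m}\setminus A_{n+m-1}$.

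For admissibility I would work on $W=k[X_{1},\ldots,X_{m},Y,Z,T]$ with the weight $w$ given by $w(X_{i})=e_{i}$, $w(Z)=w(T)=0$, $w(Y)=\ell$, writing $\pi\colon W\twoheadrightarrow A$ for the canonical surjection (so $\ker\pi=(G)$) and $W_{\leqslant n}$ for the $k$-span of the monomials of weight $\leqslant n$. Since $\pi$ carries a monomial of weight $w(M)$ to a monomial of $A$ all of whose homogeneous components in $B[x_{1}^{-1},\ldots,x_{m}^{-1}]$ have degree $\leqslant w(M)$, we get $\pi(W_{\leqslant n})\subseteq A_{n}$, and $\pi(W_{\leqslant n})$ is precisely the span of the monomials in $\Gamma$ lying in $A_{n}$; the work is in the reverse inclusion $A_{n}\subseteq\pi(W_{\leqslant n})$. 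Given $a\in A_{n}$, pick a preimage $P\in W$ of minimal weight $N=w(P)$ and suppose $N>n$. Only the weight-$N$ part $P_{N}$ of $P$ contributes to the degree-$N$ homogeneous summand of $a$ in $B[x_{1}^{-1},\ldots,x_{m}^{-1}]$, and since $\deg a\leqslant n<N$ that summand vanishes; equivalently, $P_{N}$ lies in the kernel of the graded homomorphism $\overline{\pi}\colon W\to B[x_{1}^{-1},\ldots,x_{m}^{-1}]$ with $X_{i}\mapsto x_{i}$, $Z\mapsto z$, $T\mapsto t$, $Y\mapsto y_{\ell}$, where $y_{\ell}$ is the top homogeneous summand of $y$. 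The decisive step is to show $\ker\overline{\pi}=(G^{(d)})$, where $G^{(d)}=X_{1}^{r_{1}}\cdots X_{m}^{r_{m}}Y-F^{(d)}$ and $F^{(d)}$ is the weight-$d$ homogeneous part of $F(X_{1},\ldots,X_{m},Z,T)$: one has $\overline{\pi}(G^{(d)})=0$ by taking top forms in the relation $x_{1}^{r_{1}}\cdots x_{m}^{r_{m}}y=F(x_{1},\ldots,x_{m},z,t)$; $G^{(d)}$ is a prime element of $W$, being of degree one in $Y$ and primitive over the factorial ring $k[X_{1},\ldots,X_{m},Z,T]$ once the weight vector is chosen so that $F^{(d)}$ is coprime to $X_{1}^{r_{1}}\cdots X_{m}^{r_{m}}$ (this holds, e.g., whenever $F^{(d)}$ involves $Z$ or $T$, and is what one arranges in the applications); and the image $k[x_{1},\ldots,x_{m},z,t,y_{\ell}]$ has transcendence degree $m+2$ over $k$, so $\ker\overline{\pi}$ is a height-one prime and therefore equals $(G^{(d)})$. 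Granting this, write $P_{N}=G^{(d)}Q$ with $Q$ homogeneous of weight $N-d$; since $G=G^{(d)}-(F-F^{(d)})$ and $w((F-F^{(d)})Q)\leqslant N-1$, the element $P-GQ$ is another preimage of $a$ of weight $<N$, contradicting minimality. Hence $N\leqslant n$, so $a\in\pi(W_{\leqslant n})$, and admissibility for $\Gamma$ follows.

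Finally, admissibility gives $A_{n}=\pi(W_{\leqslant n})$ for every $n$, so $A_{n}/A_{n-1}$ is spanned by the classes $\rho(M)$ of the $\Gamma$-monomials $M$ of weight exactly $n$; since $\rho$ is multiplicative on nonzero elements (by axiom (iv)), these are monomials in $\rho(x_{1}),\ldots,\rho(x_{m}),\rho(y),\rho(z),\rho(t)$, so $\gr(A)=\bigoplus_{n}A_{n}/A_{n-1}$ is generated as a $k$-algebra by the image of $\Gamma$. The step I expect to cost real work is the identification $\ker\overline{\pi}=(G^{(d)})$: the inclusion $\supseteq$ is formal, but the reverse rests on the primality of $G^{(d)}$ and the height computation, and it is precisely here that the choice of the weight vector $(e_{1},\ldots,e_{m})$ must be exploited so that the leading form $F^{(d)}$ does not acquire a spurious common factor with $X_{1}^{r_{1}}\cdots X_{m}^{r_{m}}$.
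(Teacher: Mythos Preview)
The paper does not give its own proof of this lemma; it is quoted verbatim from \cite[Lemma~3.1]{adv}, so there is no in-text argument to compare against. Your write-up follows precisely the standard template used for results of this type: verify the four filtration axioms via the ambient graded domain $B[x_1^{-1},\ldots,x_m^{-1}]$, then prove $A_n=\pi(W_{\leqslant n})$ by a minimal-weight preimage argument that uses the identification $\ker\overline\pi=(G^{(d)})$. Under the hypothesis that $x_j\nmid F_d$ for every $j$ (equivalently $\gcd(X_1^{r_1}\cdots X_m^{r_m},F_d)=1$), your argument is complete and correct: $G^{(d)}$ is then primitive of $Y$-degree one, hence prime, and the height computation pins down $\ker\overline\pi$.

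Your caveat about the primality of $G^{(d)}$ is not mere caution---it is exactly the obstruction, and it cannot be removed. Without the hypothesis $x_j\nmid F_d$, admissibility can genuinely fail. For instance, take $m=1$, $r_1=2$, $F=Z+X_1T$, and weight $e_1=1$. Then $d=1$, $F_d=x_1t$, $\ell=-1$, and the element $a=x_1y-t=z/x_1$ lies in $A_{-1}$. Any $\Gamma$-monomial $x_1^{i}y^{j}z^{p}t^{q}$ in $A_{-1}$ has $j\geqslant i+1$, and a short computation in $B[x_1^{-1}]$ shows that the $x_1^{-1}$-component of any $k$-linear combination of such monomials lies in $t\cdot k[z,t]$; but the $x_1^{-1}$-component of $a$ is $z\notin t\cdot k[z,t]$. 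So no admissible expression for $a$ exists with this weight. This is harmless for the paper: in the source \cite{adv} one has $F=f(Z,T)$, so $F_d=f$ and $x_j\nmid F_d$ automatically, and every application of Lemma~\ref{ad} in the present paper (Propositions~\ref{ml}, Lemmas~\ref{fdk1},~\ref{fdk2}) invokes it jointly with Lemma~\ref{gr}, whose hypothesis is precisely $x_j\nmid F_d$. Your proof therefore establishes exactly what is needed, and your closing remark that the weight vector ``must be exploited'' to avoid a spurious common factor is the correct diagnosis---but it should be read as an additional hypothesis rather than a step to be carried out.
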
   

\begin{lem}\thlabel{gr}
	Let $d$ denote the degree of $F(x_{1}, \ldots, x_{m},z,t)$ and $F_{d}$ denote
 the highest degree homogeneous summand of $F$. Suppose that, for each $j$, $1 \leqslant j \leqslant m$,
 $x_{j} \nmid F_{d}$. Then the associated graded ring $gr(A)$ is isomorphic to 
	$$
	\frac{k[X_{1}, \ldots , X_{m},Y,Z,T]}{ \left(X_{1}^{r_{1}}\cdots X_{m}^{r_{m}}Y- F_{d}(X_{1}, \ldots , X_{m},Z,T)\right)}
	$$
	as $k$-algebras.
\end{lem}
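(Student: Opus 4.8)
The plan is to realize $\gr(A)$ concretely as a graded subring of the Laurent polynomial ring $B[x_1^{-1},\dots,x_m^{-1}]$ and then to identify it, via a natural graded surjection, with the hypersurface ring $R := k[X_1,\dots,X_m,Y,Z,T]/(X_1^{r_1}\cdots X_m^{r_m}Y - F_d)$. First I would set up the embedding. Since $G=X_1^{r_1}\cdots X_m^{r_m}Y-F$ has $Y$-degree $1$, the composite $k[X_1,\dots,X_m,Z,T]\to k[X_1,\dots,X_m,Y,Z,T]\to A$ is injective, so $B=k[x_1,\dots,x_m,z,t]\cong k^{[m+2]}$ and $B[x_1^{-1},\dots,x_m^{-1}]$ is a $\bZ$-graded integral domain of transcendence degree $m+2$ over $k$, with the grading recalled before \thref{ad}. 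By construction $\{A_n\}$ is precisely the filtration induced on $A$ by this grading via $A\subseteq B[x_1^{-1},\dots,x_m^{-1}]$, so sending $\rho(a)=a+A_{n-1}$ (for $a\in A_n\setminus A_{n-1}$) to the degree-$n$ homogeneous component of $a$ gives a well-defined injective graded $k$-algebra homomorphism $\gr(A)\hookrightarrow B[x_1^{-1},\dots,x_m^{-1}]$; well-definedness and injectivity are immediate from the description of the induced filtration, and multiplicativity is condition (iv) of a proper $\bZ$-filtration. Identifying $\gr(A)$ with its image, we get $\rho(x_j)=x_j$, $\rho(z)=z$, $\rho(t)=t$, and, since $y=F(x_1,\dots,x_m,z,t)/(x_1^{r_1}\cdots x_m^{r_m})$ in $B[x_1^{-1},\dots,x_m^{-1}]$ and $F_d$ is the top weighted-homogeneous summand of $F(x_1,\dots,x_m,z,t)$, also $\rho(y)=F_d(x_1,\dots,x_m,z,t)/(x_1^{r_1}\cdots x_m^{r_m})$. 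By \thref{ad}, $\gr(A)=k[\rho(x_1),\dots,\rho(x_m),\rho(y),\rho(z),\rho(t)]$, and as $\gr(A)$ contains the algebraically independent elements $x_1,\dots,x_m,z,t$, we have $\td_k\gr(A)=m+2$.

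Next I would check that the candidate ring $R$ is a domain of the right dimension. Grade $k[X_1,\dots,X_m,Y,Z,T]$ by $\w(X_j)=e_j$, $\w(Z)=\w(T)=0$, $\w(Y)=d-(r_1e_1+\cdots+r_me_m)$; then $F_d$ is homogeneous of weight $d=\w(X_1^{r_1}\cdots X_m^{r_m}Y)$, so $X_1^{r_1}\cdots X_m^{r_m}Y-F_d$ is homogeneous and $R$ is graded. The hypothesis $x_j\nmid F_d$ for all $j$, together with $F_d\neq 0$, means $F_d$ is coprime to $X_1^{r_1}\cdots X_m^{r_m}$ in the UFD $k[X_1,\dots,X_m,Z,T]$, so $X_1^{r_1}\cdots X_m^{r_m}Y-F_d$ is a primitive, hence irreducible, polynomial of $Y$-degree one over that UFD; therefore $R$ is an integral domain, a hypersurface in $\A^{m+3}$, and $\td_k R=m+2$.

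Finally I would build the comparison map. Define the graded $k$-algebra homomorphism $\psi\colon k[X_1,\dots,X_m,Y,Z,T]\to\gr(A)$ by $X_j\mapsto\rho(x_j)$, $Y\mapsto\rho(y)$, $Z\mapsto\rho(z)$, $T\mapsto\rho(t)$. Applying $\rho$ to the identity $x_1^{r_1}\cdots x_m^{r_m}y=F(x_1,\dots,x_m,z,t)$ in $A$ and using multiplicativity of $\rho$ gives $\rho(x_1)^{r_1}\cdots\rho(x_m)^{r_m}\rho(y)=F_d(\rho(x_1),\dots,\rho(x_m),\rho(z),\rho(t))$, so $\psi$ annihilates $X_1^{r_1}\cdots X_m^{r_m}Y-F_d$ and descends to a graded $k$-algebra surjection $\bar\psi\colon R\twoheadrightarrow\gr(A)$ (surjectivity by \thref{ad}). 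Since $\bar\psi$ is a surjection of affine $k$-domains with $\td_k R=m+2=\td_k\gr(A)$, and a nonzero prime ideal of an affine domain over a field strictly decreases the transcendence degree, $\ker\bar\psi=0$; hence $\bar\psi$ is the asserted isomorphism. The one step needing genuine care is the first — verifying that $\gr(A)$ really embeds as a graded subring of $B[x_1^{-1},\dots,x_m^{-1}]$ and computing the leading form of $y$ — since it is precisely through this computation that the hypothesis $x_j\nmid F_d$ becomes the irreducibility of the candidate relation, which is what permits $\bar\psi$ to be inverted; the remaining steps are routine.
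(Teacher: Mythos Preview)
Your argument is correct. The paper does not actually prove this lemma; it is quoted verbatim from \cite[Lemma 3.2]{adv}, so there is no in-paper proof to compare against. Your approach --- embedding $\gr(A)$ into the graded Laurent ring $B[x_1^{-1},\dots,x_m^{-1}]$ via leading forms, reading off $\rho(y)=F_d/(x_1^{r_1}\cdots x_m^{r_m})$, checking that the candidate hypersurface ring $R$ is an integral domain using the hypothesis $x_j\nmid F_d$ (primitivity of a degree-one polynomial in $Y$), and concluding that the resulting surjection $R\twoheadrightarrow\gr(A)$ of affine $k$-domains of equal transcendence degree must be an isomorphism --- is the natural line of proof and all steps are sound. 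One small remark on your closing commentary: the hypothesis $x_j\nmid F_d$ is not needed to compute the leading form of $y$ (that is always $F_d/(x_1^{r_1}\cdots x_m^{r_m})$); its sole role is exactly where you use it, namely to force irreducibility of the defining relation of $R$ so that the transcendence-degree argument applies.
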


The next result describes $\ml(A)$ when $\dk(A)$ is exactly equal to $B(=k[x_{1}, \ldots , x_{m},z,t])$. 

\begin{prop}\thlabel{ml}
	Let $A$ be the affine domain as in \eqref{AG}. Then the following hold:
	
	\begin{itemize}
		\item [\rm (a)] Suppose, for every $i \in \{1,\ldots,m\}$, $x_i \notin A^{*}$, and $F \notin k[X_1,\ldots,X_m]$. 
Then $\ml(A) \subseteq E(= k[x_{1},\ldots,x_{m}])$.
		
		\item[\rm (b)]  If $\dk(A)=B$, then $\ml(A)=E$.
	\end{itemize}
\end{prop}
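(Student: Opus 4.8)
The plan is to exhibit two explicit exponential maps on $A$ and squeeze $\ml(A)$ between their rings of invariants; part (b) will then follow from part (a) together with the hypothesis $\dk(A)=B$.

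\textbf{Step 1: two Danielewski-type exponential maps.} Inside $C_m[z,t]:=B[x_1^{-1},\dots,x_m^{-1}]=k[x_1^{\pm},\dots,x_m^{\pm}][z,t]=C_m^{[2]}$, with $C_m=k[x_1^{\pm},\dots,x_m^{\pm}]$, I consider the translation exponential maps $\Phi_z,\Phi_t$ over $C_m$ given by $\Phi_z(z)=z+U\,x_1^{r_1}\cdots x_m^{r_m}$, $\Phi_z(t)=t$, and $\Phi_t(t)=t+U\,x_1^{r_1}\cdots x_m^{r_m}$, $\Phi_t(z)=z$. Since $y=F(x_1,\dots,x_m,z,t)/(x_1^{r_1}\cdots x_m^{r_m})$ and $F(x,\,z+U\,x_1^{r_1}\cdots x_m^{r_m},\,t)-F(x,z,t)$ is divisible by $x_1^{r_1}\cdots x_m^{r_m}$ in $k[x,z,t,U]$, one gets $\Phi_z(A)\subseteq A[U]$, so $\phi_z:=\Phi_z|_A$ is an exponential map on $A$; likewise $\phi_t:=\Phi_t|_A$. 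Both are non-trivial, and $k[x_1,\dots,x_m,t]\subseteq A^{\phi_z}$, $k[x_1,\dots,x_m,z]\subseteq A^{\phi_t}$.

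\textbf{Step 2: proof of (a).} By \thref{prope}(i)--(ii), $A^{\phi_z}$ is algebraically closed in $A$ with $\td_k A^{\phi_z}=m+1=\td_k k[x_1,\dots,x_m,t]$; since $\mathrm{Frac}(A)=k(x_1,\dots,x_m,z,t)=k(x_1,\dots,x_m,t)(z)$ and the base field $k(x_1,\dots,x_m,t)$ is algebraically closed in this purely transcendental extension, it follows that $A^{\phi_z}=A\cap k(x_1,\dots,x_m,t)$, and symmetrically $A^{\phi_t}=A\cap k(x_1,\dots,x_m,z)$. Hence
\[
\ml(A)\ \subseteq\ A^{\phi_z}\cap A^{\phi_t}\ =\ A\cap k(x_1,\dots,x_m)\ =\ A\cap\mathrm{Frac}(E).
\]
The remaining point, $A\cap\mathrm{Frac}(E)=E$, is the technical heart. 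As $A\subseteq C_m[z,t]$ we have $A\cap\mathrm{Frac}(E)=A\cap C_m$, so it suffices to show $A$ contains no element of $C_m\setminus E$. Multiplying by suitable monomials, localizing at $x_2,\dots,x_m$, and using the symmetry among the $x_i$, one reduces to showing that $A$ contains no element $b_0(x_2,\dots,x_m)/x_1$ with $0\ne b_0$; writing such an element as $\sum_n g_n(x,z,t)y^n$ with $g_n\in B$, clearing $x_1$-denominators, and comparing $x_1$-adic orders and leading coefficients in $k(x_2,\dots,x_m,z,t)$ yields a contradiction, because the leading coefficient is forced to be a multiple of a power of $F(0,x_2,\dots,x_m,z,t)$ --- a polynomial that is nonzero (since $f(z,t)\ne0$), genuinely involves $z$ or $t$ (by the hypothesis $F\notin k[X_1,\dots,X_m]$), and is a non-unit (which is precisely the hypothesis $x_1\notin A^{*}$). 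This gives $\ml(A)\subseteq E$.

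\textbf{Step 3: proof of (b).} First, $\dk(A)=B\subsetneq A$ forces the hypotheses of (a): if $F\in k[X_1,\dots,X_m]$ then $y\in\mathrm{Frac}(E)$ and $A=R[z,t]$ with $R=k[x_1,\dots,x_m][y]$, whence $\dk(A)=A$; and if $x_i\in A^{*}$ for some $i$ then $x_i^{-1}$ is fixed by every exponential map, so $x_i^{-1}\in\dk(A)\setminus B$. Thus (a) yields $\ml(A)\subseteq E$ (alternatively: $\phi_z$ is non-trivial, so $\ml(A)\subseteq A^{\phi_z}\subseteq\dk(A)=B$, and $A^{\phi_z}\cap B=k[x_1,\dots,x_m,t]$, $A^{\phi_t}\cap B=k[x_1,\dots,x_m,z]$ as invariants of translations on $B$, so $\ml(A)\subseteq E$). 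For the reverse inclusion one must show that every non-trivial exponential map $\phi$ on $A$ fixes each $x_i$. Since $\dk(A)=B$ we have $A^{\phi}\subseteq B$, and $A^{\phi}$ is factorially closed in $A$ (\thref{prope}(i)); it therefore suffices to prove $x_1\cdots x_m\in A^{\phi}$, for then each $x_i\in A^{\phi}$. To get this I would invoke the local-slice presentation of a non-trivial exponential map: there exist $0\ne r\in A^{\phi}\ (\subseteq B)$ and $s\in A$ with $A[r^{-1}]=A^{\phi}[r^{-1}][s]$, a polynomial ring in $s$; writing $x_1,\dots,x_m,y,z,t$ as polynomials in $s$ with coefficients in $A^{\phi}[r^{-1}]\subseteq B[r^{-1}]$ and comparing $s$-degrees on the two sides of $x_1^{r_1}\cdots x_m^{r_m}y=F(x_1,\dots,x_m,z,t)$, the inclusion $A^{\phi}\subseteq B$ together with the shape of $F$ forces each $x_i$ to be independent of $s$, i.e. $x_i\in A^{\phi}[r^{-1}]\cap A=A^{\phi}$ (again by factorial closedness). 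Hence $E\subseteq\ml(A)$, and with the first inclusion, $\ml(A)=E$.

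\textbf{Main obstacle.} The hard part is the reverse inclusion in (b): controlling $A^{\phi}$ for a completely arbitrary non-trivial exponential map and ruling out that $\phi$ moves some $x_i$. Knowing $A^{\phi}\subseteq B$ does not by itself prevent $\phi$ from acting non-trivially on $x_1,\dots,x_m$, so the argument must intertwine the local-slice presentation, the factorial closedness of $A^{\phi}$, and the exact degree bookkeeping imposed by the relation $x_1^{r_1}\cdots x_m^{r_m}y=F$. A second delicate point is the equality $A\cap\mathrm{Frac}(E)=E$ in (a): each of the hypotheses $F\notin k[X_1,\dots,X_m]$ and $x_i\notin A^{*}$ must be used at exactly the right step, and a few ``cancellation'' subcases in the leading-coefficient comparison must be handled.
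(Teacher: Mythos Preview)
Your treatment of part~(a) is essentially the paper's: you use the same two translation exponential maps, and your endpoint $\ml(A)\subseteq A\cap C_m$ together with $A\cap C_m=E$ is equivalent to the paper's computation $A^{\phi_1}=k[x_1,\dots,x_m,z]$ (which uses exactly the hypotheses $\deg_T F>0$ and $x_i\notin A^{*}$). The paper's route is slightly cleaner because it pins down $A^{\phi_1}$ exactly and then intersects with $A^{\phi_2}\subseteq k[x_1,\dots,x_m,(x_1\cdots x_m)^{-1},t]$, so the intersection $E[z]\cap C_m[t]=E$ is immediate and the separate lemma $A\cap C_m=E$ is not needed; but the content is the same, and your sketch of $A\cap C_m=E$ is on the right track.

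Part~(b) is where your proposal has a genuine gap, and it is the heart of the proposition. Your local-slice idea --- write $x_i,y,z,t$ as polynomials in a slice $s$ over $A^{\phi}[r^{-1}]$ and ``compare $s$-degrees'' in $x_1^{r_1}\cdots x_m^{r_m}y=F(x,z,t)$ --- does not by itself force $\deg_s x_i=0$. The relation only yields $\sum_i r_i\deg_s x_i+\deg_s y=\deg_s F(x,z,t)$, and since $F$ can have arbitrary shape while the leading $s$-coefficients on both sides already lie in $B_r$ (because $A^{\phi}\subseteq B$), no contradiction emerges from degree or leading-coefficient bookkeeping alone. You correctly flag this as the main obstacle, but the outline does not contain an idea that closes it.

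The paper's argument for $E\subseteq\ml(A)$ is genuinely different and uses the graded machinery of \thref{dhm}, \thref{ad} and \thref{gr}. Fix $j$ and give $A$ the admissible filtration with $\w(x_j)=-1$ and all other weights $0$; the associated graded ring $A_j$ is obtained by setting $X_j=0$ in $F$. Choose $m+1$ algebraically independent elements $f_1,\dots,f_{m+1}\in A^{\phi}\subseteq B$ and write $f_i=g_i+x_jh_i$ with $g_i$ free of $x_j$; then each $g_i=\rho(f_i)$ lies in $A_j^{\overline{\phi}}$ for the induced non-trivial exponential map $\overline{\phi}$ on $A_j$. If the $g_i$ were algebraically independent, the factorially closed ring $A_j^{\overline{\phi}}$ would contain all of $k[x_1,\dots,\widehat{x_j},\dots,x_m,z,t]$, and then the defining relation would force $x_j,y\in A_j^{\overline{\phi}}$ as well, contradicting non-triviality of $\overline{\phi}$. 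Hence there is a nonzero $P$ with $P(g_1,\dots,g_{m+1})=0$, i.e.\ $P(f_1,\dots,f_{m+1})=x_jH\in A^{\phi}$, and factorial closedness gives $x_j\in A^{\phi}$. This ``pass to the associated graded and drop one variable'' step is precisely the missing idea in your outline.
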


\begin{proof}
	(a) Since $F \notin k[X_1,\ldots,X_m]$, without loss of generality, suppose $\deg_T F > 0$.
	Consider the map $\phi_{1}: A \rightarrow A[U]$ defined as follows:
	$$
	\phi_{1}(x_{i})=x_{i}\,(1 \leqslant i \leqslant m),\,\,\, \phi_{1}(z)=z,\,\,\, \phi_{1}(t)=t+ x_{1}^{r_{1}}\cdots x_{m}^{r_{m}}U,
	$$
	and
	$$
	\phi_{1}(y)=\frac{F(x_{1}, \ldots , x_{m}, z, t+x_{1}^{r_{1}} \cdots x_{m}^{r_{m}}U)}{x_{1}^{r_{1}} \cdots x_{m}^{r_{m}}} = y+ Uv(x_{1}, \ldots, x_{m},z,t,U),
	$$
	for some $v\in k[x_{1},\ldots,x_{m},z,t,U]$. It is easy to see that $\phi_{1}\in \text{EXP}(A)$. Now $$k[x_{1}, \ldots, x_{m},z] \subseteq A^{\phi_{1}} \subseteq A \subseteq k[x_{1}, \ldots, x_{m},(x_{1}\cdots x_{m})^{-1},z,t].$$ 
	Since $\deg_tF > 0$, and for every $i$, $1 \leqslant i \leqslant m$, $x_i \notin A^{*}$, it follows that $$
	A \,\cap\, k[x_1,\ldots,x_m,(x_1 \cdots x_m)^{-1},z ]=k[x_1,\ldots,x_m,z].
	$$ 
	Therefore, $k[x_{1}, \ldots, x_{m},z]$ is algebraically closed in $A$.
	Also $\td_{k}(k[x_{1}, \ldots, x_{m},z])= \td_{k}(A^{\phi_{1}})=m+1$ (cf. \thref{prope}(ii)). 
        Hence $A^{\phi_{1}}=k[x_{1},\ldots,x_{m},z]$.
	
	Again consider the map $\phi_{2}: A \rightarrow A[U]$ defined as follows:
	$$
	\phi_{2}(x_{i})=x_{i}\,(1 \leqslant i \leqslant m),\,\,\, \phi_{2}(t)=t,\,\,\, \phi_{2}(z)=z+ x_{1}^{r_{1}}\cdots x_{m}^{r_{m}}U,
	$$
	and
	$$
	\phi_{2}(y)=\frac{F(x_{1}, \ldots , x_{m}, z+x_{1}^{r_{1}} \cdots x_{m}^{r_{m}}U, t)}{x_{1}^{r_{1}} \cdots x_{m}^{r_{m}}} = y+ Uw(x_{1}, \ldots, x_{m},z,t,U),
	$$
	for some $w \in k[x_{1},\ldots,x_{m},z,t,U]$. It follows that $\phi_2 \in \text{EXP}(A)$. Clearly 
	$$
	k[x_{1}, \ldots, x_{m},t] \subseteq A^{\phi_{2}} \subseteq k[x_1,\ldots,x_m,(x_1 \cdots x_m)^{-1},t].
	$$
	Therefore, $\ml(A) \subseteq A^{\phi_{1}} \cap A^{\phi_{2}} \subseteq k[x_{1}, \ldots , x_{m}]=E$.
	
	\medskip
	
	\noindent
	(b)  Suppose $\dk(A)=B$. Note that for every $i$, $1\leqslant i \leqslant m$, 
$x_i \notin A^{*}$, otherwise $x_i^{-1} \in \dk(A)$. Further, if either $\deg_TF=0$  
or $\deg_ZF=0$, then either $y\in A^{\phi_{1}}$ or $y\in A^{\phi_{2}}$ respectively, 
where $\phi_{1},\phi_{2} \in \text{EXP}(A)$ are as defined in part (a). 
This contradicts our assumption that $\dk(A)=B$. 
Therefore, $F \notin k[X_1,\ldots,X_m]$, and hence by part (a), it is clear that $\ml(A) \subseteq E$. 
	
	Let $\phi \in \text{EXP}(A)$. We show that $E \subseteq A^{\phi}$. Now $\td_{k} A^{\phi}=m+1$ (cf. \thref{prope}(ii)) and $A^{\phi} \subseteq k[x_{1}, \ldots, x_{m},z,t]$. Suppose $\{ f_{1}, \ldots, f_{m+1}\}$ is an algebraically independent set of elements in $A^{\phi}$. We fix some $ j  \in \{1, \ldots, m\}$ and let
	\begin{equation}\label{f_{i}}
		f_{i} =g_{i}(x_{1}, \ldots, x_{j-1},x_{j+1}, \ldots, x_{m},z,t)+ x_{j} h_{i}(x_{1}, \ldots, x_{m},z,t)
	\end{equation}
	for every $i \in \{1,\ldots,m\}$. We show that the set $\{g_{1},\ldots, g_{m+1}\}$ is algebraically dependent. 
	
	Suppose not.
	We consider the $\mathbb{Z}$-filtration on $A$ induced by $(0,\ldots,0,-1,0,\ldots,0) \in \mathbb{Z}^m$, where the $j$-th entry is $-1$. If 
$f_{id}$ denotes the highest degree homogeneous summand  of $f_{i}$, 
	then from \eqref{f_{i}}, we get that $f_{id}=g_{i}$. By \thref{dhm}, $\phi$ will induce a non-trivial exponential map $\phi_{j}$ on the associated graded ring $A_{j}$ and 
	$\{g_{i}\,\,|\,\, 1 \leqslant i \leqslant m+1 \} \subseteq A_{j}^{\phi_{j}}$. By \thref{gr},
	\begin{equation}\label{A_j}
		A_{j} \cong \frac{k[X_{1}, \ldots, X_{m}, Y,Z,T]}{\left(X_{1}^{r_{1}}\cdots X_{m}^{r_{m}}Y- F(X_{1}, \ldots X_{j-1},0,X_{j+1},\ldots, X_{m},Z,T) \right) }.
	\end{equation}
	 Since $\{g_{i}\,\,|\,\, 1 \leqslant i \leqslant m+1 \}$ is algebraically independent, $k[x_{1},\ldots, x_{j-1},x_{j+1},\ldots, x_{m},z,t]$ is algebraic over $k[g_{i} \,\,|\,\, 1 \leqslant i \leqslant m+1]$. As 
	$k[g_{i}\,\,| \,\, 1 \leqslant i \leqslant m+1] \subseteq A_{j}^{\phi_{j}}$ and $A_{j}^{\phi_{j}}$ is algebraically closed, we have $k[x_{1}, \ldots, x_{j-1},x_{j+1}, \ldots , x_{m},z,t] \subseteq A_{j}^{\phi_{j}}$. Therefore, from \eqref{A_j}, we get that $x_{j}, y \in A_{j}^{\phi_{j}}$, which contradicts that $\phi_{j}$ is non-trivial. Thus, $\{g_{i}\,\,|\,\, 1 \leqslant i \leqslant m+1 \}$ is algebraically dependent. Hence there exists a polynomial $P \in k^{[m+1]}$ such that 
	$$
	P(g_{1}, \ldots, g_{m+1})=0.
	$$
	Therefore, from \eqref{f_{i}}, we get that there exists $H\in k[x_{1}, \ldots, x_{m},z,t]$ 
such that $x_{j}H=P(f_{1}, \ldots, f_{m+1}) \in A^{\phi}$. 
	As $A^{\phi}$ is factorially closed (cf. \thref{prope}(i)), we have $x_{j} \in A^{\phi}$.
 Since $j$ is arbitrarily chosen from $\{1,\ldots,m\}$, we have $E=k[x_{1},\ldots,x_{m}] \subseteq A^{\phi}$.
 As $\phi \in \text{EXP}(A)$ is arbitrary, we have $E \subseteq \ml(A)$. Therefore, $\ml(A)=E$.
\end{proof}

	\begin{rem}\thlabel{r3}
\em{From \thref{dk}, it follows that if $k$ is an infinite field and there is no system of coordinates 
$\{Z_1,T_1\}$ of $k[Z,T]$ such that $f(Z,T)=a_0(Z_1)+a_1(Z_1)T_1$, 
then $\dk(A)=B$ and hence $\ml(A)=E$.}
\end{rem} 
 
Our next proposition gives equivalent conditions for the ring   $A$  to be a UFD.
\begin{prop}\thlabel{ufd2}
		The following statements are equivalent:
	\begin{enumerate}
		\item[\rm(i)] $A$ is a UFD.
		\item[\rm(ii)] For each $j, 1\leqslant j \leqslant m$, either $x_{j}$ is prime in $A$ or  $x_j \in A^{*}$.
		\item[\rm(iii)] For each $j, 1 \leqslant j \leqslant m$, $F_j:=F(X_1,\ldots, X_{j-1},0,X_{j+1},\ldots,X_m,Z,T)$ is either an irreducible element in $k[X_1,\ldots,X_{j-1},X_{j+1},\ldots,X_m,Z,T]$ or  $F_j \in k^{*}$.
\end{enumerate}
	In particular, if $	F(X_{1}, \ldots, X_{m}, Z,T)= f(Z,T)+ (X_{1} \cdots X_{m})g$, for some $g \in k[X_1,\ldots,X_m,Z,T]$, then the following statements are equivalent:
	\begin{enumerate}
		\item[$\rm( i^{\prime})$] $A$ is a UFD.
		\item[$\rm(ii^{\prime})$] For each $j, 1\leqslant j \leqslant m$, either $x_{j}$ is prime in $A$ or  $x_j \in A^{*}$.
		\item[$\rm(iii^{\prime})$]  $f(Z,T)$ is either an irreducible element in $k[Z,T]$ or $f(Z,T) \in k^{*}$.
	\end{enumerate}
\end{prop}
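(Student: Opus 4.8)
The plan is to prove the three equivalences (i)$\Leftrightarrow$(ii)$\Leftrightarrow$(iii) in the general statement, and then specialize. The key structural fact is the chain $B = k[x_1,\dots,x_m,z,t] \hookrightarrow A \hookrightarrow B[x_1^{-1},\dots,x_m^{-1}]$ together with the relation $x_1^{r_1}\cdots x_m^{r_m}y = F(x_1,\dots,x_m,z,t)$ inside $A$. First I would record that $A$ is a Noetherian domain which is a finitely generated $k$-algebra, so Nagata's criterion applies: $A$ is a UFD if and only if $A[(x_1\cdots x_m)^{-1}]$ is a UFD and each $x_j$ that is a nonunit is prime. Since $A[(x_1\cdots x_m)^{-1}] = B[x_1^{-1},\dots,x_m^{-1}] = k[x_1,\dots,x_m,z,t][x_1^{-1},\dots,x_m^{-1}]$ is a localization of a polynomial ring and hence always a UFD, Nagata's criterion immediately gives (i)$\Leftrightarrow$(ii). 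This is the easy half and should be dispatched quickly.

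For (ii)$\Leftrightarrow$(iii), the point is to identify, for each fixed $j$, the quotient $A/x_jA$. Setting $X_j = 0$ in the defining relation kills the left-hand side $X_1^{r_1}\cdots X_m^{r_m}Y$ (because $r_j \geq 1$), so one gets
$$
A/x_jA \;\cong\; \frac{k[X_1,\dots,X_m,Y,Z,T]}{(X_j,\; F_j(X_1,\dots,X_{j-1},X_{j+1},\dots,X_m,Z,T))} \;\cong\; \frac{k[X_1,\dots,X_{j-1},X_{j+1},\dots,X_m,Z,T]}{(F_j)}[Y],
$$
where $F_j = F(X_1,\dots,X_{j-1},0,X_{j+1},\dots,X_m,Z,T)$. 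Now $x_j$ is prime in $A$ precisely when $A/x_jA$ is a domain, i.e. when $(F_j)$ is a prime ideal of the polynomial ring $k[X_1,\dots,X_{j-1},X_{j+1},\dots,X_m,Z,T]$ — equivalently, $F_j$ is irreducible (or, if $F_j$ is a nonzero constant, $A/x_jA = 0$, which is the case $x_j \in A^*$; and $F_j = 0$ is excluded because that would force $A/x_jA$ to be $k^{[m+1]}$, which is fine, but one must check whether $x_j$ being prime forces $F_j \neq 0$ — actually $F_j=0$ makes $x_jA$ not prime only if... I would need to verify that $F_j \equiv 0$ contradicts $A$ being a domain via $x_1\cdots x_m y = F$, or handle it directly). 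The subtle bookkeeping here is the dictionary "nonunit $x_j$" $\leftrightarrow$ "$F_j \notin k^*$" and "irreducible" $\leftrightarrow$ "prime", using that a polynomial ring over a field is a UFD so irreducible $=$ prime there. Combining with (i)$\Leftrightarrow$(ii) yields (i)$\Leftrightarrow$(iii).

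Finally, for the "in particular" statement with $F = f(Z,T) + (X_1\cdots X_m)g$: here $X_j = 0$ annihilates the $(X_1\cdots X_m)g$ term as well, so $F_j = f(Z,T)$ for every $j$, independently of $j$. Thus condition (iii) collapses to the single condition (iii$'$): $f(Z,T)$ is irreducible in $k[Z,T]$ or $f(Z,T) \in k^*$ (recall $f \neq 0$ is part of the standing hypothesis on $A$ in \eqref{AG}). Substituting into the already-proved (i)$\Leftrightarrow$(ii)$\Leftrightarrow$(iii) gives (i$'$)$\Leftrightarrow$(ii$'$)$\Leftrightarrow$(iii$'$) at once. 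I expect the only real obstacle to be getting the degenerate cases right — namely carefully distinguishing, for each $j$, among "$F_j$ a nonzero nonunit and irreducible", "$F_j$ a unit", and ruling out (or correctly interpreting) "$F_j = 0$" — and making sure the statement "$x_j$ prime or $x_j$ a unit" exhausts all cases allowed when $A$ is a UFD. Everything else is the standard Nagata-criterion argument plus the elementary computation of $A/x_jA$.
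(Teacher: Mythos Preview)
Your treatment of (ii) $\Rightarrow$ (i) via Nagata's criterion and of (ii) $\Leftrightarrow$ (iii) via the quotient computation $A/x_jA \cong \big(k[X_1,\dots,\widehat{X_j},\dots,X_m,Z,T]/(F_j)\big)[Y]$ is exactly what the paper does, and the specialization to the case $F = f(Z,T) + (X_1\cdots X_m)g$ is handled the same way. Your worry about $F_j = 0$ is unnecessary: $F_j(0,\dots,0,Z,T) = f(Z,T) \neq 0$ by the standing hypothesis on $A$.

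The genuine gap is in (i) $\Rightarrow$ (ii). Nagata's criterion is one-directional: it says that if a multiplicative set $S$ is generated by \emph{prime} elements and $S^{-1}A$ is a UFD, then $A$ is a UFD. It does \emph{not} say that if $A$ is a UFD then the particular generators you chose for $S$ are prime. Knowing that $A$ and $A[(x_1\cdots x_m)^{-1}]$ are both UFDs tells you nothing about whether the specific element $x_j$ is irreducible in $A$; a priori $x_j$ could factor nontrivially in $A$ even though it is irreducible in $B$. So your biconditional ``$A$ is a UFD $\Leftrightarrow$ $A[(x_1\cdots x_m)^{-1}]$ is a UFD and each nonunit $x_j$ is prime'' is not a statement of Nagata's criterion, and its forward direction is precisely the content of (i) $\Rightarrow$ (ii) that needs proof.

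The paper supplies this missing step by a direct irreducibility argument using the embedding $A \hookrightarrow B[x_1^{-1},\dots,x_m^{-1}]$. If $x_j = ab$ with $a \notin B$, one writes $a = h_1/(x_1^{i_1}\cdots x_m^{i_m})$ and $b = h_2/(x_1^{j_1}\cdots x_m^{j_m})$ with $h_1,h_2 \in B$, clears denominators to obtain $h_1h_2 = x_j \cdot x_1^{i_1+j_1}\cdots x_m^{i_m+j_m}$ in the UFD $B$, and concludes that $a$ is, up to a scalar, a Laurent monomial in the $x_i$. One then checks, using $x_m A \cap B = (x_m, F)B$ together with $f(Z,T) \neq 0$, that such an $a$ lying in $A$ is forced to be a unit. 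This is where the hypothesis $f \neq 0$ enters essentially, and once you have this irreducibility, (i) $\Rightarrow$ (ii) follows since irreducibles are prime in a UFD.
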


\begin{proof}
	The equivalence of the statements for $m=1$ has been done in \cite[Lemma 3.1]{com}. We will show this result for $m>1$.
	
	\smallskip
	\noindent
	$\rm (i) \Rightarrow (ii):$
	Note that $B \hookrightarrow A \hookrightarrow B[x_1^{-1},\ldots, x_m^{-1}]$. Suppose $x_j \notin A^{*}$ for some $j, 1 \leqslant j \leqslant m$. Since $A$ is a UFD, it is enough to show that $x_j$ is irreducible. Suppose $x_j= ab$ for some $a,b \in A$. If $a,b \in B$, then either $a \in B^{*}$ or $b \in B^{*}$, as $x_j$ is irreducible in $B$. Therefore, we can assume that at least one of them is not in $B$. Suppose $a \notin B$. Let $a= \frac{h_1}{x_1^{i_1}\cdots x_m^{i_m}}$ and $b= \frac{h_2}{x_1^{j_1}\cdots x_m^{j_m}}$, for some $h_1,h_2 \in B$ and $i_s,j_s \geqslant 0$, $1 \leqslant s \leqslant m$. Therefore, we have 
	\begin{equation}\label{h}
		h_1h_2= x_j(x_1^{i_1+j_1} \cdots x_m^{i_m+j_m}).
	\end{equation}
	As $a \notin B$, using \eqref{h}, without loss of generality, we can assume that  
	\begin{equation}\label{a1}
	a= \lambda \frac{x_1^{p_1} \cdots x_s^{p_s}}{x_{s+1}^{p_{s+1}} \cdots x_m^{p_m}},
   \end{equation}
	for some $\lambda \in k^{*}$ and $ s, 1 \leqslant s < m$, where $p_i \geqslant 0$ for $1 \leqslant i \leqslant s$, and $p_i > 0$ for $i \geqslant s+1$. If $p_i=0$ for every $i \leqslant s$, then $a \in A^{*}$, and we are done. If not, then without loss of generality, we may assume that $p_1>0$. Therefore, from \eqref{a1}, we have 
	$$
	x_1^{p_1} \cdots x_s^{p_s} \in  x_mA \cap B= \left( x_m, F \right)B.
	$$
	 But this is a contradiction as $f(Z,T)=F(0,\ldots,0,Z,T) \neq 0$.
	Therefore, we obtain that $x_j$ must be an irreducible element in $A$, hence prime in $A$.

	\smallskip
	\noindent
	$\rm (ii) \Leftrightarrow (iii):$
	For every $j, 1 \leqslant j \leqslant m$, we have
\begin{equation}\label{xj}
		\frac{A}{x_{j}A} \cong \frac{k[X_1,\ldots,X_{j-1},X_{j+1},\ldots,X_m,Z,T]}{(F_j)}.
\end{equation}
	Note that $x_j$ is either a prime element or a unit in $A$ if and only if $\frac{A}{x_{j}A}$ is an integral domain or a zero ring. Hence the equivalence follows from \eqref{xj}.
	
	\smallskip
	\noindent
	$\rm (ii) \Rightarrow (i):$ 
Without loss of generality we assume that $x_1,\ldots,x_i \in A^{*}$ and $x_{i+1},\ldots,x_m$ are primes in $A$
for some $i$, $1\leqslant i \leqslant m$.
	Since $A[x_{1}^{-1}, \ldots, x_{m}^{-1}]= A[x_{i+1}^{-1},\ldots,x_m^{-1}]=k[x_{1}, \ldots , x_{m}, x_{1}^{-1}, \ldots , x_{m}^{-1}]^{[2]}$ is a UFD, by Nagata's criterion for UFD (\cite[Theorem 20.2]{matr}), we obtain that $A$ is a UFD. 
\end{proof}

The next result gives a condition for $A$ to be flat over $E$.

\begin{lem}\thlabel{flat1}
  Suppose  $F(X_{1}, \ldots, X_{m}, Z,T)= f(Z,T)+ (X_{1} \cdots X_{m})g$, for some $g \in k[X_1,\ldots,X_m,Z,T]$. Then $A$ is a flat $E$-algebra.
\end{lem}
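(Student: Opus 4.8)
The plan is to realise $A$ as a hypersurface over a polynomial ring over $E$ and then detect flatness through the vanishing of $\operatorname{Tor}_{1}^{E}$ against the residue rings of $E$. Put $a:=x_{1}^{r_{1}}\cdots x_{m}^{r_{m}}$ and $c:=F(x_{1},\ldots,x_{m},z,t)=f(z,t)+(x_{1}\cdots x_{m})g(x_{1},\ldots,x_{m},z,t)$; regrouping the indeterminates in the definition of $A$ one has
\[
A=E[z,t,Y]\,/\,(aY-c),
\]
where $E=k[x_{1},\ldots,x_{m}]=k^{[m]}$ and $z,t,Y$ are indeterminates over $E$. Since $aY-c$ is a nonzero element of the domain $E[z,t,Y]$, it is a nonzerodivisor, and hence
\[
0\longrightarrow E[z,t,Y]\xrightarrow{\,\cdot(aY-c)\,}E[z,t,Y]\longrightarrow A\longrightarrow 0
\]
is a resolution of $A$ by free $E$-modules. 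Consequently, for every prime ideal $\mathfrak{p}$ of $E$,
\[
\operatorname{Tor}_{1}^{E}(A,\,E/\mathfrak{p})\;\cong\;\ker\!\big((E/\mathfrak{p})[z,t,Y]\xrightarrow{\,\cdot(\bar a\,Y-\bar c)\,}(E/\mathfrak{p})[z,t,Y]\big),
\]
where $\bar a,\bar c$ denote the images of $a,c$ in $(E/\mathfrak{p})[z,t]$. As $E$ is Noetherian, $A$ is $E$-flat if and only if $\operatorname{Tor}_{1}^{E}(A,E/\mathfrak{p})=0$ for every $\mathfrak{p}\in\Spec E$ (see, e.g., \cite{mat}); so it suffices to check that $\bar a\,Y-\bar c$ is a nonzerodivisor in $(E/\mathfrak{p})[z,t,Y]$ for each prime $\mathfrak{p}$.

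Since $\mathfrak{p}$ is prime, $(E/\mathfrak{p})[z,t,Y]$ is an integral domain, so one merely has to verify $\bar a\,Y-\bar c\neq 0$. If $x_{1}\cdots x_{m}\notin\mathfrak{p}$, then $\bar a\neq 0$, and $\bar a\,Y-\bar c$ is nonzero because its leading coefficient in $Y$ is nonzero. If $x_{j}\in\mathfrak{p}$ for some $j$, then $x_{1}\cdots x_{m}$ maps to $0$ in $E/\mathfrak{p}$, so $\bar c$ is the image of $f(z,t)$; this image is nonzero because $f\neq 0$ and the coefficients of $f$ lie in the field $k$, which embeds in $E/\mathfrak{p}$. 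Hence $\bar a\,Y-\bar c=-\,\overline{f(z,t)}\neq 0$ in this case as well. In either case the multiplication map above is injective, so $\operatorname{Tor}_{1}^{E}(A,E/\mathfrak{p})=0$ for all $\mathfrak{p}$, and therefore $A$ is flat over $E$.

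The argument is essentially formal once two things are in hand: the presentation $A=E[z,t,Y]/(aY-c)$ (which reorganises the single defining relation so that a free $E$-resolution is visible) and the standard reduction of flatness over a Noetherian ring to the vanishing of $\operatorname{Tor}_{1}$ against prime quotients. The hypothesis on $F$ is used only in the case $x_{j}\in\mathfrak{p}$: there it is precisely the summand $(x_{1}\cdots x_{m})g$ --- and not the ``constant term'' $f$ --- that vanishes modulo $\mathfrak{p}$, so that $\bar a\,Y-\bar c$ cannot degenerate to $0$; for a general $F$ the fibre of $\Spec A\to\Spec E$ over such a $\mathfrak{p}$ may jump in dimension and $A$ need not be $E$-flat. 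An alternative, somewhat longer, route is to invoke ``miracle flatness'': $A$ is Cohen--Macaulay (a hypersurface in $\A^{m+3}$), $E$ is regular, and $A\otimes_{E}k(\mathfrak{p})$ is $k(\mathfrak{p})^{[2]}$ when $x_{1}\cdots x_{m}\notin\mathfrak{p}$ and $k(\mathfrak{p})[z,t,Y]/(f(z,t))$ when some $x_{j}\in\mathfrak{p}$, so every fibre has dimension $2=\dim A-\dim E$; this route requires the extra observation that if $f\in k^{*}$ then each $x_{j}$ is a unit in $A$ (read off from $ay=c$), so $A=k[x_{1}^{\pm 1},\ldots,x_{m}^{\pm 1},z,t]$ is patently $E$-flat. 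I would present the $\operatorname{Tor}$ argument, since it is shorter and handles all cases uniformly.
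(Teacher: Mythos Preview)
Your argument is correct and takes a genuinely different route from the paper's proof. The paper localises: for a prime $\mathfrak{q}$ of $A$ lying over $\mathfrak{p}\subset E$ with $x_i\in\mathfrak{p}$, it passes through the DVR $k[x_i]_{(x_i)}$, observes that both $E_\mathfrak{p}$ and $A_\mathfrak{q}$ are flat over this DVR (being torsion-free domains), and then applies the local flatness criterion (\thref{flat}) together with the computation $A/x_iA\cong (E/x_iE)[Y,Z,T]/(f)$ to conclude. Your approach instead exploits the hypersurface presentation $A=E[z,t,Y]/(aY-c)$ to write down an explicit two-term free $E$-resolution and reduces flatness to the single check that $\bar aY-\bar c$ is nonzero in every $(E/\mathfrak{p})[z,t,Y]$; the prime-filtration reduction (``$\operatorname{Tor}_1^E(A,E/\mathfrak{p})=0$ for all $\mathfrak{p}$ implies flat'') is standard for Noetherian base rings. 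Your route is shorter and avoids \thref{flat} entirely, at the cost of relying on the special shape of the defining equation; the paper's route is the one that generalises more readily to situations without an obvious short free resolution over $E$. Both proofs use the hypothesis on $F$ at exactly the same point: to ensure that modulo any $x_j$ the relation does not collapse to $0$ (equivalently, that the fibre $A\otimes_E\kappa(\mathfrak{p})$ has the expected form $\kappa(\mathfrak{p})[Y,Z,T]/(f)$).
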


\begin{proof}
	Let $\q \in \Spec (A)$ and $\p =\q \cap A \in \Spec(E)$. Note that $A[x_1^{-1},\ldots,x_m^{-1}]=E[x_1^{-1},\ldots,x_m^{-1},z,t]$. Hence $A_{\q}$ is a flat $E_{\p}$ algebra if $x_i \notin \p$ for every $i, 1 \leqslant i \leqslant m$. Now    
	suppose $x_i \in \p$ for some $i$. Consider the following maps:
	
	$$
	k[x_i]_{(x_i)} \hookrightarrow E_{\p} \hookrightarrow A_{\q}.
	$$
	We observe that both $E_{\p}$ and $A_{\q}$ are flat over $k[x_i]_{(x_i)}$ and
	$$
	\frac{A}{x_iA} \cong \frac{(E/x_iE)[Y,Z,T]}{(f(Z,T))}.
	$$
	Since $\frac{k[Y,Z,T]}{(f(Z,T))}$ is a flat $k$-algebra, it follows that $\frac{A}{x_iA} \left(=\frac{k[Y,Z,T]}{(f(Z,T)) } \otimes_{k} \frac{E}{x_iE}\right)$ is a flat $\frac{E}{x_iE}$-algebra. Hence it follows that $\frac{A_{\q}}{x_iA_{\q}}$ is flat over $\frac{E_{\p}}{x_iE_{\p}}$ and therefore, by \thref{flat}, we get that $A_{\q}$ is flat over $E_{\p}$. Thus, $A$ is locally flat over $E$, and hence $A$ is flat over $E$.
\end{proof}

The next result gives some necessary and sufficient conditions for $A$ to be an affine fibration over $E$.

\begin{prop}\thlabel{line}
	    Suppose $F(X_{1}, \ldots, X_{m}, Z,T)= f(Z,T)+ (X_{1} \cdots X_{m})g$, for some $g \in k[X_1,\ldots,X_m,Z,T]$.
	    Then the following statements are equivalent:
		
		\begin{enumerate}
			\item [\rm(i)] $A$ is an $\mathbb{A}^{2}$-fibration over $E$. 
			
			\item[\rm(ii)] $\frac{A}{(x_{1}, \ldots , x_{m})A}=k^{[2]}$.
			
			\item[\rm(iii)] $f(Z,T)$ is a line in $k[Z,T]$, i.e., $\frac{k[Z,T]}{(f(Z,T))}=k^{[1]}$.
		\end{enumerate}
\end{prop}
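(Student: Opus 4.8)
The plan is to prove the cycle of implications $\mathrm{(i)} \Rightarrow \mathrm{(ii)} \Rightarrow \mathrm{(iii)} \Rightarrow \mathrm{(i)}$, using the flatness already established in \thref{flat1} together with the fibration criterion \thref{flat}.

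For $\mathrm{(i)} \Rightarrow \mathrm{(ii)}$: Since $A$ is an $\mathbb{A}^2$-fibration over $E = k[x_1,\ldots,x_m]$, we have $A \otimes_E k(\p) = k(\p)^{[2]}$ for every $\p \in \Spec(E)$. Taking $\p = (x_1,\ldots,x_m)E$, the residue field $k(\p)$ is $k$ itself, and $A \otimes_E k(\p) = A/(x_1,\ldots,x_m)A$; hence $A/(x_1,\ldots,x_m)A = k^{[2]}$.

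For $\mathrm{(ii)} \Rightarrow \mathrm{(iii)}$: From the defining relation and $F = f(Z,T) + (X_1\cdots X_m)g$, killing $x_1,\ldots,x_m$ gives
$$
\frac{A}{(x_1,\ldots,x_m)A} \cong \frac{k[Y,Z,T]}{(f(Z,T))} = \left(\frac{k[Z,T]}{(f(Z,T))}\right)^{[1]}.
$$
So $\mathrm{(ii)}$ says $\left(k[Z,T]/(f)\right)^{[1]} = k^{[2]}$. By \thref{aeh} (or more directly, since $k[Z,T]/(f)$ is an affine $k$-domain of transcendence degree $1$ that becomes a polynomial ring after adjoining one variable, it is itself $k^{[1]}$ — this is a standard cancellation fact in dimension one; alternatively invoke the fact that a one-dimensional affine factorial $k$-algebra with trivial units which is stably polynomial is polynomial), we conclude $k[Z,T]/(f) = k^{[1]}$, i.e.\ $f$ is a line in $k[Z,T]$. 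One must be slightly careful here: strictly I would first note that $k[Z,T]/(f)$ is a normal affine domain (since $f$ is irreducible when $k[Z,T]/(f)$ is a domain, being reduced after adjoining a variable) sitting inside its polynomial extension, then apply \thref{aeh} after localizing appropriately, or simply cite the one-variable cancellation theorem. This normality/cancellation step is the one place requiring a little care rather than being purely formal.

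For $\mathrm{(iii)} \Rightarrow \mathrm{(i)}$: Assume $k[Z,T]/(f) = k^{[1]}$. We already know from \thref{flat1} that $A$ is flat over $E$, and $A$ is clearly a finitely generated $E$-algebra, so it remains to check the fibre condition $A \otimes_E k(\p) = k(\p)^{[2]}$ for all $\p \in \Spec(E)$. If $x_i \notin \p$ for all $i$, then $A \otimes_E k(\p)$ is a localization-then-quotient of $A[x_1^{-1},\ldots,x_m^{-1}] = E[x_1^{-1},\ldots,x_m^{-1},z,t] = E[x_1^{-1},\ldots,x_m^{-1}]^{[2]}$, which gives $k(\p)^{[2]}$ immediately. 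If $x_i \in \p$ for some $i$, then base-changing the isomorphism $A/x_iA \cong (E/x_iE)[Y,Z,T]/(f(Z,T))$ along $E/x_iE \to k(\p)$ (noting $x_j$ for $j \neq i$ becomes a unit or stays as a variable's image, but crucially $f$ involves only $Z,T$, so the fibre is $k(\p)[Y,Z,T]/(f) = (k(\p)[Z,T]/(f))^{[1]}$) and using that $k[Z,T]/(f) = k^{[1]}$ implies $k(\p)[Z,T]/(f) = k(\p)^{[1]}$ (base change of a polynomial ring), we get $A \otimes_E k(\p) = k(\p)^{[1]\,[1]} = k(\p)^{[2]}$. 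Hence $A$ is an $\mathbb{A}^2$-fibration over $E$.

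The main obstacle I anticipate is the $\mathrm{(ii)} \Rightarrow \mathrm{(iii)}$ step, specifically justifying that $\left(k[Z,T]/(f)\right)^{[1]} = k^{[2]}$ forces $k[Z,T]/(f) = k^{[1]}$ — this is exactly a one-dimensional instance of the Zariski Cancellation Problem, which does hold (affine curves that are stably $\mathbb{A}^1$ are $\mathbb{A}^1$, a consequence of \thref{aeh} once one knows the ring is normal, which itself follows because the ring is reduced with a smooth stabilization). Everything else is a routine unwinding of definitions and flat base change.
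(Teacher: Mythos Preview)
Your proof is correct and follows essentially the same route as the paper: the same cycle $\mathrm{(i)}\Rightarrow\mathrm{(ii)}\Rightarrow\mathrm{(iii)}\Rightarrow\mathrm{(i)}$, invoking \thref{flat1} for flatness and \thref{aeh} for the one-dimensional cancellation step. The only difference is presentational: where you hedge about normality in $\mathrm{(ii)}\Rightarrow\mathrm{(iii)}$, the paper simply records the inclusions $k\hookrightarrow k[Z,T]/(f)\hookrightarrow (k[Z,T]/(f))^{[1]}=k^{[2]}$, observes that $k[Z,T]/(f)$ is therefore a one-dimensional normal domain (normality being inherited from the ambient polynomial ring via the obvious retract, not from irreducibility of $f$ as you suggest), and applies \thref{aeh} directly.
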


\begin{proof}
	
	\noindent
	$\rm(i) \Rightarrow \rm(ii):$
	 Since $A$ is an $\mathbb{A}^{2}$-fibration over $E$, for every $\p \in \Spec(E)$, we have
	$$
	A\otimes_{E} \frac{E_{\p}}{\p E_{\p}} = \left(\frac{E_{\p}}{\p E_{\p}}
	\right)^{[2]}
.	$$
	Hence for $\p=(x_{1}, \ldots , x_{m})E$, we get $\frac{A}{(x_{1},\ldots,x_{m})A}=k^{[2]}.$
	
	\smallskip
	\noindent
	$\rm(ii) \Rightarrow \rm(iii):$
		Since
		$$
		k \hookrightarrow \frac{k[Z,T]}{(f(Z,T))}\hookrightarrow \frac{A}{(x_{1}, \ldots , x_{m})A}= \left(\frac{k[Z,T]}{(f(Z,T))}\right)^{[1]}=k^{[2]},
		$$
		we obtain that $\frac{k[Z,T]}{(f(Z,T))}$ is a one dimensional normal domain. Hence, $f(Z,T)$ is a line in $k[Z,T]$ by  \thref{aeh}.
		
		\smallskip
		\noindent
		$\rm(iii) \Rightarrow \rm(i):$ 
		 By \thref{flat1}, $A$ is a flat $E$-algebra. Let $\p \in \Spec(E) $ and $A_{\p}$ denotes the localisation of the ring $A$ with respect to the multiplicatively closed set $E\setminus \p$. We now show $A$ is an $\mathbb{A}^{2}$-fibration over $E$.
		
		\smallskip
		 \noindent
		{\it Case} 1:
		If $x_i \notin \p$ for every $i=1, \ldots ,m$, then 
		$A_{\p}=E_{\p}[z,t]$. Hence, 
		
		\begin{equation}\label{lf1}
		\frac{A_{\p}}{\p A_{\p}}= A \otimes_{E} \frac{E_{\p}}{\p E_{\p}}= \left(\frac{E_{\p}}{\p E_{\p}}\right)^{[2]}
		.
	\end{equation}
		
		\noindent
		{\it Case} 2:
		Suppose $x_i \in \p$ for some $i, 1 \leqslant i \leqslant m$. Since $f(Z,T)$ is a line in $k[Z,T]$, we have
		
		$$
		 \frac{A}{x_iA} 
		 = \frac{(E/x_iE)[Y,Z,T]}{(f(Z,T))}= \left( \frac{E}{x_iE}\right)^{[2]}  .
		$$

Hence, $A \otimes_{E} \kappa(\p)= \kappa(\p)^{[2]}$, where  $\kappa(\p)= \frac{E_{\p}}{\p E_{\p}}$.

Therefore, by the above two cases, we obtain that $A$ is an $\mathbb{A}^{2}$-fibration over $E$. 		
\end{proof}

 The following lemma may be known but in the absence of a ready reference, a proof is included here.

\begin{lem}\thlabel{g0}
	Let $E=k[x_1,\ldots,x_m]$, $u=x_1 \cdots x_m$ and $R=\frac{E}{uE}$. Then $G_0(R) \neq 0$.
\end{lem}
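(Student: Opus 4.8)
The plan is to identify $G_0(R)$ with a cokernel via the localisation sequence of \thref{lexact}, and then to show this cokernel is nonzero using the split exact sequences of \thref{split}(b). Since $E=k[x_1,\dots,x_m]$ is a domain, $u=x_1\cdots x_m$ is a regular element of $E$, so \thref{lexact} applied to $E$ and $u$ gives an exact sequence
\[
G_1(E)\xrightarrow{\ j^*\ }G_1(E[u^{-1}])\xrightarrow{\ \partial\ }G_0(R)\xrightarrow{\ \pi_*\ }G_0(E)\xrightarrow{\ j^*\ }G_0(E[u^{-1}])\longrightarrow 0 ,
\]
where $E[u^{-1}]=k[x_1,\dots,x_m,x_1^{-1},\dots,x_m^{-1}]=C_m$ in the notation of \thref{rmk1}.

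Next I would argue that $\pi_*=0$, which identifies $G_0(R)$ with $\operatorname{coker}\bigl(j^*\colon G_1(E)\to G_1(C_m)\bigr)$. Indeed, $E$ and $C_m$ are regular, so by \thref{rmk1} we have $G_0(E)=K_0(E)=\mathbb Z$ and $G_0(C_m)=K_0(C_m)=\mathbb Z$, each generated by the class of the respective ring; hence $j^*\colon G_0(E)\to G_0(C_m)$ sends $[E]$ to $[C_m]$ and is an isomorphism, which forces $\pi_*=0$ and $\partial$ surjective. It now suffices to prove that $j^*\colon G_1(E)\to G_1(C_m)$ is \emph{not} surjective.

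For this, set $C_{m-1}=k[x_1,\dots,x_{m-1},x_1^{-1},\dots,x_{m-1}^{-1}]$, so that $C_m=C_{m-1}[x_m,x_m^{-1}]$ and the inclusion $E\hookrightarrow C_m$ factors as $E\hookrightarrow C_{m-1}[x_m]\hookrightarrow C_{m-1}[x_m,x_m^{-1}]=C_m$. By functoriality of $G_1$, the image of $j^*\colon G_1(E)\to G_1(C_m)$ lies in the image of $G_1(C_{m-1}[x_m])\to G_1(C_m)$. By \thref{split}(b), applied with the ring $C_{m-1}$ and the indeterminate $x_m$, the sequence
\[
0\longrightarrow G_1(C_{m-1}[x_m])\longrightarrow G_1(C_m)\longrightarrow G_0(C_{m-1})\longrightarrow 0
\]
is (split) exact, and $G_0(C_{m-1})=\mathbb Z\neq 0$ by \thref{rmk1}. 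Hence $G_1(C_{m-1}[x_m])$, and \emph{a fortiori} the image of $G_1(E)$, is a proper subgroup of $G_1(C_m)$, so $G_0(R)\cong\operatorname{coker}(j^*)$ is nonzero; in fact it surjects onto $G_0(C_{m-1})\cong\mathbb Z$.

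The argument is essentially formal, so there is no serious obstacle; the one point requiring attention is the bookkeeping of the induced maps, i.e.\ checking that the connecting homomorphism $\partial$ and the maps $j^*,\pi_*$ appearing in \thref{lexact} and \thref{split}(b) are the ones used above, and that the inclusion $E\hookrightarrow C_m$ genuinely factors through $C_{m-1}[x_m]$ so that functoriality of $G_1$ applies. (An alternative, $K$-theory-free proof: the minimal primes of $R$ are $(x_i)/(u)$, $1\le i\le m$, each of coheight $m-1=\dim R$, and the assignment $[M]\mapsto\bigl(\operatorname{length}_{R_{\mathfrak p}}M_{\mathfrak p}\bigr)_{\mathfrak p}$ over these primes defines a homomorphism $G_0(R)\to\mathbb Z^m$ carrying $[R]$ to $(1,\dots,1)\neq 0$.)
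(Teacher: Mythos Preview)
Your proof is correct and follows essentially the same route as the paper: both apply the localisation sequence of \thref{lexact} to $E$ and $u$, and then use \thref{split}(b) (together with \thref{rmk1}) to see that $j^*\colon G_1(E)\to G_1(E[u^{-1}])$ is not surjective, forcing $G_0(R)\neq 0$. The paper is slightly more direct---it simply notes $G_1(E)\cong k^*\hookrightarrow k^*\oplus\mathbb Z^m\cong G_1(E[u^{-1}])$ is a split inclusion---whereas you factor through $C_{m-1}[x_m]$ and also check $\pi_*=0$ (which is correct but unnecessary for the bare conclusion $G_0(R)\neq 0$, since the cokernel of $j^*$ already injects into $G_0(R)$ by exactness).
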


\begin{proof}
		By \thref{lexact} the inclusion $E \hookrightarrow E[u^{-1}]$ induces the exact sequence:
	\begin{equation}\label{9}
		\begin{tikzcd}
			G_{1}(E) \arrow[r] &G_{1}(E[u^{-1}])\arrow[r] & G_{0}(R) \arrow[r] &G_{0}(E) \arrow[r, "h"] &G_{0}(E[u^{-1}]) \arrow[r] &0.
		\end{tikzcd}
	\end{equation}
	By \thref{rmk1} and repeated application of \thref{split}, we see that the map 
	\begin{equation*}
		\begin{tikzcd}
			G_{1}(E) \cong k^{*} \arrow[r, hook] &G_{1}(E[u^{-1}]) \cong k^{*} \oplus \mathbb{Z}^{m}
		\end{tikzcd}
	\end{equation*}
	is a split inclusion and hence can not be surjective.
	  Therefore, $G_{0}(R) \neq 0$ by \eqref{9}.
\end{proof}

We now prove our main result.

\begin{thm}\thlabel{equiv}
Suppose $	F(X_{1}, \ldots, X_{m}, Z,T)= f(Z,T)+ (X_{1} \cdots X_{m})g$, for some $g \in k[X_1,\ldots,X_m,Z,T]$.
 Then the following statements are equivalent:
	\begin{enumerate}
		\item [\rm(i)] $k[X_{1}, \ldots , X_{m},Y,Z,T]=k[X_{1}, \ldots, X_{m},G]^{[2]}$.
		
			\item[\rm(ii)]  $k[X_{1}, \ldots , X_{m},Y,Z,T]=k[G]^{[m+2]}$.
		
		\item[\rm(iii)] $A=k[x_{1}, \ldots , x_{m}]^{[2]}$.
		
		\item[\rm(iv)] $A=k^{[m+2]}$.
		
		\item[\rm(v)] $k[Z,T]=k[f(Z,T)]^{[1]}$.

	 \item[\rm(vi)] $A^{[l]}=k^{[l+m+2]}$ for some $l \geqslant 0$ and $k[x_{1},\ldots,x_{m},z,t]   \subsetneqq \dk(A)$.
		
		\item[\rm(vii)]  $A$ is an $\mathbb{A}^{2}$-fibration over $k[x_{1}, \ldots , x_{m}]$ and $k[x_{1}, \ldots , x_{m},z,t] \subsetneqq \dk(A)$.
		
		\item [\rm(viii)] $f(Z,T)$ is a line in $k[Z,T]$ and $k[x_{1}, \ldots , x_{m},z,t] \subsetneqq \dk(A)$.

			\item[\rm(ix)]  $f(Z,T)$ is a line in $k[Z,T]$ and $\ml(A)=k$.

		\item[\rm(x)] 	$A^{[l]}=k^{[l+m+2]}$ for $l \geqslant 0$ and $\ml(A)=k$.
		
		\item [\rm(xi)] $A$ is an $\mathbb{A}^{2}$-fibration over $k[x_{1},\ldots,x_{m}]$ and $\ml(A)=k$.

	\end{enumerate}
	
\end{thm}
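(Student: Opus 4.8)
The plan is to establish the equivalences by proving a chain of implications that exploits the structural lemmas already assembled in this section. The easy backbone is $\text{(i)} \Rightarrow \text{(ii)} \Rightarrow \text{(iv)} \Rightarrow \text{(iii)}$ together with $\text{(iii)} \Rightarrow \text{(i)}$. The implication $\text{(i)} \Rightarrow \text{(ii)}$ is immediate since $k[X_1,\dots,X_m,G]^{[2]} = k[G]^{[m+2]}$; and $\text{(ii)} \Rightarrow \text{(iv)}$ follows because $A = k[X_1,\dots,X_m,Y,Z,T]/(G)$, so if the ambient ring is $k[G]^{[m+2]}$ then $A \cong k^{[m+2]}$. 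For $\text{(iii)} \Rightarrow \text{(i)}$: if $A = E^{[2]}$, then lifting via $\thref{rsco}$ (applied with $R = k$, after absorbing $Z$ into the argument via the form $F = f + (X_1\cdots X_m)g$) one shows $k[X_1,\dots,X_m,Y,Z,T] = k[X_1,\dots,X_m,G]^{[2]}$; the point is that modulo $G$ the ring collapses to $E^{[2]}$, and \thref{rsco}-type reasoning patches the polynomial structure back up over $E[G]$. The genuinely substantive direction is $\text{(iv)} \Rightarrow \text{(v)}$ (equivalently $\text{(iii)} \Rightarrow \text{(v)}$), and conversely $\text{(v)} \Rightarrow \text{(iii)}$.

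For $\text{(v)} \Rightarrow \text{(iii)}$: assume $k[Z,T] = k[f]^{[1]}$, say $k[Z,T] = k[f, T_1]$. Then $F = f(Z,T) + (X_1\cdots X_m)g$ can be rewritten, and one checks $A = k[x_1,\dots,x_m, y, \text{something}]$; concretely, since $x_1^{r_1}\cdots x_m^{r_m} y = f + (x_1\cdots x_m)g$, modulo the ideal the element $f$ lies in $(x_1\cdots x_m)A$, and using the coordinate pair $\{f, T_1\}$ one produces an explicit presentation exhibiting $A$ as $E[\,\cdot\,,\cdot\,]$. This is essentially the $m$-variable analogue of the sufficiency direction in Theorem A and should go through by a direct change of variables, invoking \thref{rsco} once more to certify the polynomiality. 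For the hard direction $\text{(iv)} \Rightarrow \text{(v)}$: from $A = k^{[m+2]}$ we get (Lemma \ref{poly}) that $\dk(A) = A \supsetneq B$ and $\ml(A) = k$; by \thref{dk} (its conclusion holds since the Derksen invariant is strictly bigger than $B$ — first reduce to $k$ infinite by passing to $\overline{k}$ and using \thref{sepco} as in the proof of \thref{r1}) there are coordinates $\{Z_1,T_1\}$ of $k[Z,T]$ with $f = a_0(Z_1) + a_1(Z_1)T_1$. Now one must upgrade "$f$ has this special shape" to "$f$ is a coordinate". Here I would bring in \thref{line}: since $A = k^{[m+2]}$ is in particular an $\mathbb{A}^2$-fibration over $E = k^{[m]}$ (a polynomial ring over $k$ is an affine fibration over any polynomial subring), \thref{line} gives that $f$ is a \emph{line} in $k[Z,T]$, i.e. $k[Z,T]/(f) = k^{[1]}$. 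Combining: $f$ is a line of the form $a_0(Z_1) + a_1(Z_1)T_1$; writing $k[Z,T] = k[Z_1,T_1]$, the condition that $a_0(Z_1) + a_1(Z_1)T_1$ be a line forces (after analyzing the curve it cuts out) $a_1(Z_1) \in k^*$, whence $f$ is a coordinate and $k[Z,T] = k[f]^{[1]}$. I expect this last step — controlling $a_1(Z_1)$ from the line hypothesis — to be the main obstacle, though it reduces to a classical one-variable fact about plane curves.

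It remains to fold in the technical statements (vi)–(xi). The strategy is to route all of them through the already-proved equivalence of (iii), (iv), (v) together with \thref{line}, \thref{dk}, \thref{r1}, \thref{r3}, and \thref{ml}. Specifically: $\text{(iv)} \Rightarrow \text{(x)}$ is trivial (take $l=0$, use Lemma \ref{poly}); $\text{(x)} \Rightarrow \text{(vi)}$ since $\ml(A) = k$ forces $\dk(A) \supsetneq B$ (indeed $\ml(A) \subseteq E \subsetneq B$ by \thref{ml}(a), and $\ml(A)=k$ together with a non-trivial exponential map whose ring of invariants contains more than $B$ would... ) — more carefully, if $\dk(A) = B$ then by \thref{ml}(b) $\ml(A) = E \neq k$, contradiction, so (x) implies $\dk(A) \supsetneq B$, giving (vi). Then $\text{(vi)} \Rightarrow \text{(vii)}$: stable isomorphism $A^{[l]} = k^{[l+m+2]}$ makes $A$ an $\mathbb{A}^2$-fibration over $E$ (flatness from \thref{flat1}, and the fiber over each prime is an $\mathbb{A}^2$-fibration fiber by a descent/cancellation argument on the fibers — here one uses that over a field, a stably polynomial fiber which is also a fibration fiber is actually $\mathbb{A}^2$, or more directly one invokes \thref{line}'s equivalence (i)$\Leftrightarrow$(iii) once $f$ is shown to be a line); $\text{(vii)} \Rightarrow \text{(viii)}$ is exactly \thref{line} (i)$\Rightarrow$(iii); $\text{(viii)} \Rightarrow \text{(v)}$ is precisely \thref{r1}; and $\text{(viii)} \Leftrightarrow \text{(ix)}$, $\text{(ix)} \Leftrightarrow \text{(x)} \Leftrightarrow \text{(xi)}$ by feeding $\dk(A) \supsetneq B \iff \ml(A) = k$ (from \thref{ml}(b): $\dk(A) \supsetneq B \Rightarrow$ the hypotheses of \thref{ml}(a) hold $\Rightarrow \ml(A) \subseteq E$; and one shows $\ml(A) = k$ exactly when no exponential map fixes all of $E$, which under (v) happens because $A$ becomes $k^{[m+2]}$) through the proved equivalence with (v). The one point requiring care is the fiber analysis in $\text{(vi)}/\text{(vii)} \Rightarrow \text{(viii)}$: I would handle it by first using the $\dk(A) \supsetneq B$ hypothesis and \thref{dk} to pin down the shape of $f$, then deduce $f$ is a line directly (the fibration or stable-isomorphism hypothesis forces $k[Z,T]/(f)$ to be a one-dimensional normal domain, hence $k^{[1]}$ by \thref{aeh}), at which point \thref{r1} closes the loop to (v) and the cycle is complete.
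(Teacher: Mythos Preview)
Your proposal has a genuine gap in the central implication $\text{(iv)}\Rightarrow\text{(v)}$ (and the stronger $\text{(vi)}\Rightarrow\text{(v)}$). You assert that ``$A=k^{[m+2]}$ is in particular an $\mathbb{A}^{2}$-fibration over $E=k^{[m]}$ (a polynomial ring over $k$ is an affine fibration over any polynomial subring)''. This parenthetical is false: knowing $A\cong k^{[m+2]}$ abstractly says nothing about the fibre structure of the \emph{specific} inclusion $E=k[x_{1},\dots,x_{m}]\hookrightarrow A$, since the $x_{i}$ need not form part of a coordinate system of $A$. Consequently you cannot invoke \thref{line} to conclude that $f$ is a line, and your downstream argument (``line of the form $a_{0}(Z_{1})+a_{1}(Z_{1})T_{1}$ forces $a_{1}\in k^{*}$'') never gets off the ground. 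The same circularity reappears in your treatment of $\text{(vi)}\Rightarrow\text{(vii)}$, where you try to deduce the fibration from the stable isomorphism ``once $f$ is shown to be a line'', which is exactly what is at stake. A smaller but related issue: you list $\text{(iv)}\Rightarrow\text{(iii)}$ as part of the ``easy backbone'', but this is not trivial and you give no argument for it.

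The paper closes this gap by an entirely different, $K$-theoretic route. From $A^{[l]}=k^{[l+m+2]}$ one gets $G_{i}(E)\cong G_{i}(A)$ for all $i$; combining this with \thref{flat1}, the localisation long exact sequences of \thref{lexact}, \thref{fcom} and the Five Lemma yields $G_{i}(E/uE)\cong G_{i}(A/uA)$ for $u=x_{1}\cdots x_{m}$. After reducing (via \thref{dk}, \thref{ufd2}) to $f=a_{0}(Z)+a_{1}(Z)T$ over $\overline{k}$, one has $A/uA\cong R[Y,Z,a_{1}(Z)^{-1}]$ with $R=E/uE$, and a computation using \thref{split} shows that if $a_{1}$ had $n>0$ roots then $G_{1}(R[Z,a_{1}^{-1}])\cong G_{1}(R[Z])\oplus G_{0}(R)^{n}$; since $G_{0}(R)\neq 0$ by \thref{g0}, this contradicts the isomorphism just obtained. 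That forces $a_{1}\in k^{*}$ and hence $k[Z,T]=k[f]^{[1]}$. None of this machinery appears in your proposal, and I do not see how to replace it by the elementary fibration argument you sketch.
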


\begin{proof}
 Note that $\rm (i) \Rightarrow (ii) \Rightarrow (iv) \Rightarrow (vi)$, $\rm (i) \Rightarrow (iii) \Rightarrow (iv)$, $\rm(iii) \Rightarrow (vii)$, $\rm (iii) \Rightarrow (x)$ and $\rm (iii) \Rightarrow (xi)$ follow trivially. 
 
 \medskip
 \noindent 
$\rm (vi) \Rightarrow (v) :$ 
 We first assume that $k$ is algebraically closed. Since $k[x_{1},\ldots,x_{m},z,t] \subsetneqq \dk(A)$, by \thref{dk}, we may assume that 
$$
f(Z,T)=a_0(Z)+a_1(Z)T.
$$

 Suppose $a_{1}(Z)=0$, i.e., $f(Z,T)=a_{0}(Z)$. As $A$ is a UFD, by \thref{ufd2}, we obtain that $a_0(Z)$ is irreducible in $k[Z,T]$ or $a_0(Z) \in k^{*}$. If $a_0(Z) \in k^{*}$, then for every $i$, $1 \leqslant i \leqslant m$, $x_{i}\in A^{*}$. This contradicts the fact that $A^{*}=(A^{[l]})^{*}=(k^{[m+l+2]})^{*}=k^{*}$. Therefore, $a_0(Z)$ is irreducible in $k[Z,T]$ and hence a linear polynomial as $k$ is algebraically closed. Thus $f$ is a coordinate of $k[Z,T]$.

 Now suppose $a_1(Z) \neq 0$. We show that $a_{1}(Z) \in k^{*}$. Note that
${\rm gcd}(a_0(Z),a_1(Z))=1$ in $k[Z]$, as $f(Z,T)$ is irreducible in $k[Z,T]$.

 Consider the inclusions
\begin{tikzcd}
	k \arrow[r, hook, "\alpha"] & E \arrow[r,hook, "\beta"] &A \arrow[r, hook, "\gamma"] & A^{[l]}
\end{tikzcd}. Since $E=k^{[m]}$ and $A^{[l]}=k^{[m+l+2]}$, by \thref{split}(a), the inclusions $\alpha, \gamma$ and $\gamma\beta \alpha$ induce isomorphisms
$$
\begin{tikzcd}
	G_{i}(k) \arrow[r, "G_{i}(\alpha)"', "\cong" ] & G_{i}(E),
\end{tikzcd} 
\begin{tikzcd}
	G_i(A) \arrow[r, "G_i(\gamma)"',"\cong"] & G_i(A^{[l]})
\end{tikzcd}
\text{~and~} 
\begin{tikzcd}
	G_{i}(k) \arrow[r, "G_{i}(\gamma\beta \alpha)"', "\cong"] & G_{i}(A^{[l]}),
\end{tikzcd}
$$
respectively. Hence we get that $\beta$ induces an isomorphism 
\begin{tikzcd}
	G_{i}(E) \arrow[r, "G_{i}(\beta)"', "\cong"] &G_{i}(A),
\end{tikzcd} for every $i \geqslant 0$.
Let $u=x_1 \cdots x_m \in E$. Note that $A[u^{-1}]=E[u^{-1},z,t]$. Therefore, by \thref{split}(a), the inclusion $E[u^{-1}] \hookrightarrow A[u^{-1}]$ induces isomorphisms $G_{i}(E[u^{-1}]) \xrightarrow{\cong} G_{i}(A[u^{-1}])$ for $i \geqslant 0$. By \thref{flat1}, the inclusion map
\begin{tikzcd} 
	\beta : E \arrow[r,hook] &A 
\end{tikzcd} 
is a flat map. 
Therefore, by \thref{fcom}, $\beta : E \hookrightarrow A$ induces the following commutative diagram for $i \geqslant 1$:
$$
\begin{tikzcd}
	G_{i}(E) \arrow{r} \arrow[d, "G_i(\beta)"',"\cong"] & G_{i}(E[u^{-1}])\arrow{r}\arrow[d,"\cong"] & G_{i-1}(\frac{E}{uE}) \arrow{r}\arrow{d} & G_{i-1}(E)\arrow{r}\arrow[d,"\cong"] & G_{i-1}(E[u^{-1}]) \arrow[d,"\cong"] \\
	G_{i}(A) \arrow{r} & G_{i}(A[u^{-1}])\arrow{r} & G_{i-1}(\frac{A}{uA}) \arrow{r} & G_{i-1}(A) \arrow{r} & G_{i-1}(A[u^{-1}]) .
\end{tikzcd}
$$
From the above diagram, applying the Five Lemma we obtain that the map 
\begin{tikzcd}
	\frac{E}{uE} \arrow[r,"\overline{\beta}"]  &\frac{A}{uA},
\end{tikzcd} induced by 
\begin{tikzcd}
	E\arrow[r,hook, "\beta"] &A
\end{tikzcd}, induces isomorphism of groups
\begin{equation}\label{4}
	\begin{tikzcd}
		G_{i}(\frac{E}{uE}) \arrow[r,"\cong", "G_{i}(\overline{\beta})"'] &G_{i}(\frac{A}{uA}),
	\end{tikzcd}
\end{equation}
for every $i \geqslant 0$. Let $R=\frac{E}{uE}$. Now using the structure of $f(Z,T)$ we get an isomorphism as follows
$$
\begin{tikzcd}
	\frac{A}{uA} \arrow[r, "\cong", "\eta"'] &R\left[Y,Z,\frac{1}{a_{1}(Z)}\right]
\end{tikzcd}.
$$
 Further, note that $\overline{\beta}$ factors through the following maps 
$$
\overline{\beta}: R \xrightarrow[]{\gamma_1} R[Z] \xrightarrow[]{\gamma_2} R\left[Z, \frac{1}{a_{1}(Z)} \right] \xrightarrow[]{\gamma_3}  R\left[Y, Z, \frac{1}{a_{1}(Z)} \right] \xrightarrow[]{\eta^{-1}}  A/uA.
$$
Since $\overline{\beta},\gamma_1,\gamma_3,\eta^{-1}$ induce isomorphisms of $G_i$-groups for $i \geqslant 0$, we obtain that $\gamma_2$ induces an isomorphisms of groups
\begin{equation}\label{5}
	\begin{tikzcd}
		G_{i}(R[Z]) \arrow[r,"\cong", "G_{i}(\gamma_2)"'] &G_{i} \left(R\left[Z, \frac{1}{a_{1}(Z)} \right] \right),
	\end{tikzcd}
\end{equation}
for every $i \geqslant 0$.
Since $k$ is algebraically closed, if $a_1(Z) \notin k^{*}$, then we have
$a_{1}(Z)= \lambda\prod_{i=1}^{n}(Z-\lambda_i)^{m_i}$ where $\lambda \in k^*,\lambda_i \in k$, $m_i \geqslant 1$ and $\lambda_i \neq \lambda_{j}$ for $i \neq j$. Now we have 
$$
R\left[ Z, \frac{1}{a_{1}(Z)}\right]= R\left[Z, \frac{1}{Z-\lambda_1}, \ldots , \frac{1}{Z-\lambda_n} \right].
$$
Let $R_{i}=R\left[Z, \frac{1}{Z-\lambda_1}, \ldots, \frac{1}{Z-\lambda_i} \right]$, 
for $i \geqslant 1$ and $R_{0}=R[Z]$. For every $i \geqslant 1$, we have the map
 $R[Z] \hookrightarrow R_{i-1}$ is flat. Therefore, applying \thref{fcom}, 
we get the following commutative diagram for $j \geqslant 1$:
\begin{equation}\label{cd3}
\begin{tikzcd}
		G_{j}\left(\frac{R[Z]}{(Z-\lambda_i)}\right) \arrow[r]\arrow[d,"\mu_j" ] & G_{j}(R[Z])\arrow{r}\arrow[d] & G_{j}\left(R\left[Z,\frac{1}{Z-\lambda_i}\right]\right) \arrow{r}\arrow{d} & G_{j-1}\left(\frac{R[Z]}{(Z-\lambda_i)}\right)\arrow[r]\arrow[d,"\mu_{j-1}"] & \cdots \\
		G_{j}\left(\frac{R_{i-1}}{Z-\lambda_i}\right) \arrow[r] & G_{j}(R_{i-1})\arrow{r} & G_{j}\left(R_{i-1}\left[ \frac{1}{Z-\lambda_i}\right]\right) \arrow{r} & G_{j-1}\left(\frac{R_{i-1}}{Z-\lambda_i}\right) \arrow{r} & \cdots
	\end{tikzcd}
\end{equation}
	Now by \thref{split}(b) we get the following split exact sequence for each $j \geqslant 1$:
	 	$$\begin{tikzcd}
	 0 \arrow[r] & G_{j}(R[Z])\arrow{r} & G_{j}\left(R\left[Z,\frac{1}{Z-\lambda_i}\right]\right) \arrow{r} & G_{j-1}\left(\frac{R[Z]}{(Z-\lambda_i)}\right)\arrow[r] & 0 .
	 \end{tikzcd}$$
 	Since the inclusion $R[Z] \hookrightarrow R_{i-1}$ induces isomorphism of the rings $\frac{R[Z]}{(Z-\lambda_i)} \xrightarrow{\cong}~ \frac{R_{i-1}}{(Z- \lambda_i)}$, for every $i$, $1 \leqslant i \leqslant n$, the maps 
 $\mu_{j-1}$'s are isomorphisms for every $j \geqslant 1$.
	Hence from (\ref{cd3}), the following exact sequence is also split exact:
	$$
	\begin{tikzcd}
		0 \arrow[r] & G_{j}(R_{i-1})\arrow{r} & G_{j}\left(R_{i-1}\left[ \frac{1}{Z-\lambda_i}\right]\right) \arrow{r} & G_{j-1}\left(\frac{R_{i-1}}{Z-\lambda_i}\right) \arrow{r} & 0
	\end{tikzcd}
	$$
	 Thus the inclusion $R_{i-1} \hookrightarrow R_{i}=R_{i-1}[\frac{1}{Z-\lambda_i}]$ will induce a group isomorphism
	$$
	G_{j}(R_i) \cong G_{j}(R_{i-1}) \oplus G_{j-1}(R)
	$$ 
	for every $j \geqslant 1$ and $i$,  $1 \leqslant i \leqslant n$. In particular, for $j=1$, inductively we get that the inclusion 
	$\begin{tikzcd} R[Z] \arrow[r,hook,"\gamma_2"] &R_n=R\left[ Z, \frac{1}{a_{1}(Z)} \right]\end{tikzcd}$ induces the isomorphism 
	\begin{equation}\label{6}
		G_{1}\left(R\left[Z, \frac{1}{a_{1}(Z)} \right]\right) \cong G_{1}(R[Z]) \oplus \left(G_{0}(R) \right)^{n}.
	\end{equation}
	As $G_{0}(R) \neq 0$ by \thref{g0}, \eqref{6} contardicts \eqref{5}  if $n >0$ i.e., if $a_{1}(Z) \notin k^{*}$. 
	Therefore, we have $a_{1}(Z) \in k^{*}$. Hence we obtain that $k[Z,T]=k[f]^{[1]}$.
	
	\medskip
	
	When $k$ is not algebraically closed, consider the ring $\overline{A}= A \otimes_{k} \overline{k}$, where $\overline{k}$ is an algebraic closure of $k$. Note that $\overline{A}^{[l]}=\overline{k}^{[m+l+2]}$ and $\overline{k}[x_{1},\ldots,x_{m},z,t] \subsetneqq \dk(\overline{A})$. Hence by the above argument, we have $\overline{k}[Z,T]=\overline{k}[f]^{[1]}$.

	\noindent
	\medskip
	{\it Case }1:
	If $k$ is a finite field, then by \thref{sepco}, $k[Z,T]=k[f]^{[1]}$.

	\noindent
	\medskip
	{\it Case }2:
	Let $k$ be an infinite field. Then by \thref{dk} we can assume that $f(Z,T)=a_{0}(Z)+a_{1}(Z)T$ in $k[Z,T]$. As $f$ is a coordinate in $\overline{k}[Z,T]$, this is possible only if either $a_1(Z)=0$ and $a_0(Z)$ is linear in $Z$ or if $a_1(Z) \in \overline{k}^*$. Hence in either case 
	$$
	k[Z,T]=k[f]^{[1]}.
	$$
 
	\medskip

	\noindent
	$\rm (v) \Rightarrow (i):$
	Let $h \in k[Z,T]$ be such that $k[Z,T]=k[f,h]$. Therefore, without loss of generality,  we can assume that $f=Z$ and $h=T$. Hence $G=X_{1}^{r_{1}} \cdots X_{m}^{r_{m}}Y- X_{1} \cdots X_{m}g-Z$, for some $g \in k[X_{1},\ldots,X_{m},Z,T]$. Now by \thref{rsco}, we get that 
	$$
	k[X_{1},\ldots,X_{m},Y,Z,T]=k[X_{1},\ldots,X_{m},G,T]^{[1]}=k[X_{1},\ldots,X_{m},G]^{[2]}.
	$$
 Therefore the equivalence of the first six statements are established.
 
 \medskip

  By 
  \thref{r1}, $\rm (viii) \Rightarrow (v)$. By \thref{line}, $\rm (vii) \Leftrightarrow (viii)$ and $\rm (ix) \Leftrightarrow (xi)$. Therefore, $\rm (viii) \Rightarrow (v) \Leftrightarrow (iii) \Rightarrow (vii) \Leftrightarrow (viii)$. 
  
 \medskip
 
 By \thref{subdk} and \thref{ml}(b), $\rm (x) \Rightarrow (vi)$,  $\rm (xi) \Rightarrow (vii)$.
 We now see that the following hold:
 
 \smallskip
 \noindent
 $\rm (ix) \Leftrightarrow (xi) \Rightarrow (vii) \Leftrightarrow (iii) \Rightarrow (xi)$.
 
 \smallskip
 \noindent	
 $\rm (x) \Rightarrow (vi)\Leftrightarrow (iii) \Rightarrow (x)$.
	
Hence equivalence of all the statements are established.
\end{proof}

The following example shows that without the hypothesis that $F$ is of the form as in \thref{equiv}, the condition $k[Z,T]=k[f]^{[1]}$ is not sufficient for $A$ to be $k^{[m+2]}$, for $m \geqslant 2$. In particular, it is not sufficient to ensure $k[X_1,\dots, X_m, Y,Z,T] = k[X_1,\dots,X_m,G]^{[2]}$.

	\begin{ex}\thlabel{ex1}
	\em{Let  $$A = \frac{k[X_1,X_2,Y,Z,T]}{(X_1^2X_2^2Y-F)},$$
		 where $F(X_1,X_2,Z,T)=X_1Z+X_2+Z$. Note that here $f(Z,T)=F(0,0,Z,T)=Z$ is a coordinate of $k[Z,T]$. Also it is easy to see that $A$ is regular. But as 
		  $F(X_1,0,Z,T)=X_1Z+Z$ is neither irreducible nor a unit, by \thref{ufd2}, $A$ is not even a UFD. Therefore, $A \neq k^{[4]}$.}
\end{ex}

 We now prove two additional statements which are equivalent to each of the eleven statements in \thref{equiv}.
We start with two technical lemmas which will be used in \thref{eq13}. The aim of these lemmas is to show that if $\dk(A)=A$, then we can construct another affine domain $A_l$ of similar structure to $A$, such that $\dk(A_l)=A_l$ and $\dim(A_l)<\dim(A)$.

\begin{lem}\thlabel{fdk1}
	Suppose $k[x_{1}, \ldots , x_{m},z,t] \subsetneqq \dk(A)$. Then there exists a graded integral domain 
	$$
	\widehat{A} \cong \frac{k[X_{1}, \ldots , X_{m}, Y,Z,T]}{(X_{1}^{r_{1}} \cdots X_{m}^{r_{m}}Y-f( Z,T))}
	$$ 
	and a non-trivial, homogeneous exponential map $\widehat{\phi}$ 
on $\widehat{A}$, such that $wt(\widehat{x_{i}})=-1$ for every $i \in \{1,\ldots,m\}$, $wt(\widehat{z})=wt(\widehat{t})=0$, $wt(\widehat{y})=r_1+\cdots+r_m$ and $\widehat{y} \in \widehat{A}^{\widehat{\phi}}$, 
where $\widehat{x_{1}},\ldots, \widehat{x_{m}},\widehat{y},\widehat{z}, \widehat{t}$ 
denote the images of $X_1,\ldots,X_m,Y,Z,T$ respectively in $\widehat{A}$.
\end{lem}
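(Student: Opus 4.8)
The plan is to obtain $\widehat{A}$ as the associated graded ring of $A$ under the $\mathbb{Z}$-filtration induced by the weight vector $(-1,-1,\ldots,-1) \in \mathbb{Z}^m$, and to obtain $\widehat{\phi}$ by homogenizing a suitable exponential map on $A$. First I would recall from \thref{dk} that since $k[x_1,\ldots,x_m,z,t] \subsetneqq \dk(A)$ (after base change to an algebraic closure if $k$ is finite — but in fact the statement is purely about the graded structure, so I expect we may work over $k$ directly, invoking \thref{dk} only to control the shape of $f$; alternatively one avoids \thref{dk} entirely here), there is a non-trivial exponential map $\psi$ on $A$ with $B = k[x_1,\ldots,x_m,z,t] \not\subseteq A^\psi$, i.e.\ some element outside $B$ lies in a ring of invariants. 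Actually, more directly: the hypothesis says some $\phi_0 \in \mathrm{EXP}(A)$ has $A^{\phi_0} \not\subseteq B$, which forces (since $A^{\phi_0}$ is factorially closed of transcendence degree $m+1$ and $A \subseteq B[x_1^{-1},\ldots,x_m^{-1}]$) that $y$ appears in $A^{\phi_0}$, or at least that the invariant ring is not contained in $B$.

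Next I would apply \thref{ad}: the weight vector $e = (-1,\ldots,-1)$ gives a proper admissible $\mathbb{Z}$-filtration $\{A_n\}$ on $A$ with generating set $\Gamma = \{x_1,\ldots,x_m,y,z,t\}$. Here $\mathrm{wt}(x_i) = -1$, $\mathrm{wt}(z) = \mathrm{wt}(t) = 0$, and by the formula in the text $\mathrm{wt}(y) = d - (r_1 e_1 + \cdots + r_m e_m)$ where $d = \deg F(x_1,\ldots,x_m,z,t)$. Since $F = f(Z,T) + (X_1\cdots X_m)g$ and $f(Z,T) \neq 0$ has weight $0$ while every monomial of $(x_1\cdots x_m)g$ has strictly negative weight, we get $d = 0$, hence $\mathrm{wt}(y) = -(- (r_1+\cdots+r_m)) = r_1 + \cdots + r_m$, and moreover the highest-degree homogeneous summand $F_d$ of $F$ is exactly $f(Z,T)$, which is not divisible by any $x_j$. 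Therefore \thref{gr} applies and gives
$$
gr(A) \;\cong\; \frac{k[X_1,\ldots,X_m,Y,Z,T]}{\bigl(X_1^{r_1}\cdots X_m^{r_m}Y - f(Z,T)\bigr)}
$$
as $k$-algebras, with the stated weights on the images $\widehat{x_i}, \widehat{y}, \widehat{z}, \widehat{t}$ of the variables. Set $\widehat{A} := gr(A)$; this is an integral domain since $f(Z,T) \neq 0$.

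Then I would invoke \thref{dhm} (Derksen et al.\ homogenization): the non-trivial exponential map $\phi_0 \in \mathrm{EXP}(A)$ with $A^{\phi_0} \not\subseteq B$ induces a non-trivial \emph{homogeneous} exponential map $\widehat{\phi}$ on $gr(A) = \widehat{A}$ with $\rho(A^{\phi_0}) \subseteq \widehat{A}^{\widehat{\phi}}$, where $\rho: A \to gr(A)$ is the leading-form map. The final point is that $\widehat{y} \in \widehat{A}^{\widehat{\phi}}$: since $A^{\phi_0} \not\subseteq B$ and $A^{\phi_0}$ is factorially closed of transcendence degree $m+1$ in $A$ (with $B$ algebraically closed in $A$ of transcendence degree $m+2$... here one uses that $A^{\phi_0}$ cannot be contained in $B$ forces a relation producing $y$ up to a unit times a monomial in $x_i$'s), I would argue $\rho$ carries a generator involving $y$ to $\widehat{y}$ up to scalar; the factorial closedness of $\widehat{A}^{\widehat{\phi}}$ together with $\widehat{y}$ dividing $\rho(\text{that element})$ then yields $\widehat{y} \in \widehat{A}^{\widehat{\phi}}$. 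The main obstacle I anticipate is precisely this last step — carefully extracting from "$A^{\phi_0} \not\subseteq k[x_1,\ldots,x_m,z,t]$" that the leading form $\widehat{y}$ itself (not merely some element with $\widehat{y}$-content) lands in the invariant ring of the homogenized map; one likely argues via transcendence degree and factorial closedness that an invariant element $a \notin B$ must, after clearing the $x_i$-denominators in $A \subseteq B[x_1^{-1},\ldots,x_m^{-1}]$, be associate to a power of $y$ times a monomial in the $x_i$, whose leading form is a monomial in $\widehat{x_i},\widehat{y}$, forcing $\widehat{y} \in \widehat{A}^{\widehat{\phi}}$ by factorial closedness.
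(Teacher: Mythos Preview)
Your overall plan---pass to the associated graded ring for the weight vector $(-1,\ldots,-1)$ and invoke \thref{dhm}---does produce the right $\widehat{A}$ with the right grading, and \thref{gr} applies since the top homogeneous summand of $F$ under this filtration is indeed $f(Z,T)$ (you need not assume the special shape $F=f+(X_1\cdots X_m)g$; the lemma is stated for general $F$ as in \eqref{AG}, and $F_d=f$ follows just from $f\neq 0$). The genuine problem is exactly the step you flag at the end: from an invariant $h\in A^{\phi_0}\setminus B$ you cannot conclude that $\rho(h)$ is divisible by $\widehat{y}$. Under the $(-1,\ldots,-1)$ filtration a monomial $x_1^{i_1}\cdots x_m^{i_m}y^j z^p t^q$ has weight $j(r_1+\cdots+r_m)-(i_1+\cdots+i_m)$, and the only control you have is that for \emph{some} index $s$ one has $i_s<r_s$; the other $i_\ell$ can be arbitrarily large. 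For instance, with $m=2$, $r_1=r_2=2$, the element $h=z+x_1x_2^{100}y$ lies outside $B$, yet its leading form is $\widehat{z}$ (weight $0$), which has no $\widehat{y}$-content. Your proposed rescue via factorial closedness---that such an $h$ is ``associate to a power of $y$ times a monomial in the $x_i$''---is simply false, as this example shows; $\rho$ is not multiplicative, and there is no reason for $\widehat{y}$ to divide $\rho(h)$.

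The paper circumvents this by a \emph{two-step} degeneration. First one observes that $h\notin B$ forces $h$ to contain a monomial $x_1^{i_1}\cdots x_m^{i_m}y^j z^p t^q$ with $j\geqslant 1$ and $i_s<r_s$ for \emph{some} $s$, say $s=1$. One then filters $A$ by the weight vector $(-1,0,\ldots,0)$: now that monomial has weight $jr_1-i_1>0$, while every monomial in $B$ has nonpositive weight, so the leading form $\widetilde{h}$ has strictly positive weight and is therefore divisible by $\widetilde{y}$ in the intermediate graded ring $\widetilde{A}$. By factorial closedness $\widetilde{y}\in\widetilde{A}^{\widetilde{\phi}}$. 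Only then does one apply the $(-1,\ldots,-1)$ filtration, but to $\widetilde{A}$ rather than to $A$: since $\widetilde{y}$ is already a single generator and an invariant, its image $\widehat{y}$ is automatically in $\widehat{A}^{\widehat{\phi}}$. The first, single-coordinate filtration is precisely what converts the weak information ``$i_s<r_s$ for some $s$'' into the divisibility statement you need.
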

\begin{proof}
	Since $ k[x_{1}, \ldots, x_{m}, z,t] \subsetneqq \dk(A) $, there exist an exponential map $\phi$ on $A$ 
	and $h \in A^{\phi}$ such that $h$ has a monomial summand of the form 
	$h_{\overline{\iota}}=x_{1}^{i_{1}} \cdots x_{m}^{i_{m}}y^{j}z^{p}t^{q}$,
	where $\overline{\iota}=(i_{1},\ldots,i_{m}) \in \mathbb{Z}_{\geqslant0}^m$, $j \geqslant 1, p \geqslant 0, q \geqslant 0$ and there exists some $i_{s}$ such that $i_{s}< r_{s}$. Without loss of generality, we assume that $i_{s}=i_{1}$. 
	
	We now consider the proper $\mathbb{Z}$-filtration on $A$ with respect to $(-1,0,\ldots,0) \in \mathbb{Z}^{m}$.
	Let $\widetilde{A}$ be the associated graded ring and $\widetilde{F}(x_{1}, \ldots , x_{m},z,t)$ 
denote the highest degree 
homogeneous summand of $F(x_{1}, \ldots , x_{m},z,t)$. Then 
$\widetilde{F}(x_{1}, \ldots , x_{m},z,t)= F(0,x_{2}, \ldots , x_{m},z,t)$. Since $f(z,t) \neq 0$, we have $x_{i} \nmid \widetilde{F}$, for every $1 \leqslant i \leqslant m$. Therefore, by \thref{gr}, we have 
		$$
	\widetilde{A} \cong \frac{k[X_{1}, \ldots , X_{m}, Y,Z,T]}{(X_{1}^{r_{1}} \cdots X_{m}^{r_{m}}Y-\widetilde{F})}.
	$$
	For every $\alpha \in A$, let $\widetilde{\alpha}$ 
	denote its image in $\widetilde{A}$.
	By \thref{dhm}, $\phi$ will induce a non-trivial homogeneous exponential map $\widetilde{\phi}$ on $\widetilde{A}$ such that $\widetilde{h} \in \widetilde{A}^{\widetilde{\phi}}$. From the chosen filtration on $A$, it is clear that $\widetilde{y}\,  |\, \widetilde{h}$ and hence $\widetilde{y} \in \widetilde{A}^{\widetilde{\phi}}$. 
	
	We now consider the $\mathbb{Z}$-filtration on $\widetilde{A}$ with respect to $(-1,\ldots,-1) \in \mathbb{Z}^m$.
	By \thref{dhm}, we have a homogeneous non-trivial exponential map $\widehat{\phi}$ on the associated graded ring $\widehat{A}$ induced by $\widetilde{\phi}$. By \thref{gr}, 
	$$
	\widehat{A} \cong \frac{k[X_{1}, \ldots , X_{m}, Y,Z,T]}{(X_{1}^{r_{1}} \cdots X_{m}^{r_{m}}Y-f(Z,T))},
	$$
	since $f(Z,T)=\widetilde{F}(0, \ldots , 0, Z,T)$ is the highest degree homogeneous summand of $\widetilde{F}$.
 For every $a \in \widetilde{A}$, let $\widehat{a}$ denote its image in $\widehat{A}$. Since $\widetilde{y} \in \widetilde{A}^{\widetilde{\phi}},$ we have $\widehat{y} \in \widehat{A}^{\widehat{\phi}}$. The weights of $\widehat{x_{1}},\ldots,\widehat{x_{m}},\widehat{y},\widehat{z},\widehat{t}$ are clear from the chosen filtration on $\widetilde{A}$.
\end{proof}

\begin{lem}\thlabel{fdk2}
	Suppose $m>1$ and $k[x_{1}, \ldots , x_{m},z,t] \subsetneqq \dk(A)$. Then there exists an integer $l \in \{1,\ldots,m\}$ and an integral domain $A_l$ such that 
	$$
	A_{l} \cong \frac{k(X_{l})[X_{1}, \ldots , X_{l-1}, X_{l+1}, \ldots, X_{m}, Y,Z,T]}{(X_{1}^{r_{1}} \cdots X_{m}^{r_{m}}Y-f( Z,T))},
	$$ 
	where the image  $y^{\prime}$ of $Y$ in $A_l$ belongs to $\dk(A_l)$. In particular, $\dk(A_{l})= A_{l}$.
\end{lem}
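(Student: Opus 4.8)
The plan is to use \thref{fdk1} to pass from $A$ to its graded ``clean model'' $\widehat A\cong k[X_1,\dots,X_m,Y,Z,T]/(X_1^{r_1}\cdots X_m^{r_m}Y-f(Z,T))$, which carries a non-trivial homogeneous exponential map $\widehat\phi$ with $\widehat y\in\widehat A^{\widehat\phi}$ and weights $\w(\widehat{x_i})=-1$, $\w(\widehat z)=\w(\widehat t)=0$, $\w(\widehat y)=r_1+\cdots+r_m$; and then to localise $\widehat A$ at $S:=k[\widehat{x_l}]\setminus\{0\}$ for a suitable $l$, so that $\widehat\phi$ descends to a non-trivial exponential map on $A_l:=S^{-1}\widehat A$ that still fixes the image $y'$ of $Y$. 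Since $S^{-1}k[\widehat{x_l}]=k(X_l)$ and the factor $\widehat{x_l}^{\,r_l}$ becomes a unit of $k(X_l)$, the ring $A_l$ then has exactly the presentation asserted in the statement, and it is a domain as a localisation of $\widehat A$.

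Two degenerate cases are best cleared away first. If $f\in k^{*}$ then $\widehat A=k[\widehat{x_1}^{\pm1},\dots,\widehat{x_m}^{\pm1},\widehat z,\widehat t]$, and if $f$ is a coordinate of $k[Z,T]$ then $\widehat A\cong k^{[m+2]}$; in both situations $A_l=\widehat A\otimes_{k[X_l]}k(X_l)$ is a localisation of a polynomial ring over the field $k(X_l)$, so $\dk(A_l)=A_l\ni y'$ and $A_l$ visibly has the required form. Hence I may assume $f$ is neither a unit nor a coordinate, so in particular each $\widehat{x_i}\notin\widehat A^{*}$.

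The heart of the argument is to show that $\widehat{x_l}\in\widehat A^{\widehat\phi}$ for some $l\in\{1,\dots,m\}$; this is where $m>1$ is used. The invariant ring $\widehat A^{\widehat\phi}$ is a factorially closed subring of $\widehat A$ (\thref{prope}(i)), it is graded since $\widehat\phi$ is homogeneous, it has $\td_k\widehat A^{\widehat\phi}=m+1$ (\thref{prope}(ii)), and it contains $\widehat y$. Because of the relation $\widehat{x_1}^{\,r_1}\cdots\widehat{x_m}^{\,r_m}\widehat y=f(\widehat z,\widehat t)$ and factorial closedness it is enough to show $f(\widehat z,\widehat t)\in\widehat A^{\widehat\phi}$: that forces $\widehat{x_1}^{\,r_1}\cdots\widehat{x_m}^{\,r_m}\in\widehat A^{\widehat\phi}$, hence $\widehat{x_i}\in\widehat A^{\widehat\phi}$ for every $i$. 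I would deduce $f(\widehat z,\widehat t)\in\widehat A^{\widehat\phi}$ by a weight count: a graded, factorially closed subring of $\widehat A$ of transcendence degree $m+1\geq 3$ containing the positively graded element $\widehat y$ must meet the (strictly negatively graded) $\widehat{x_i}$-directions, and factorial closedness then pulls $f(\widehat z,\widehat t)=\widehat{x_1}^{\,r_1}\cdots\widehat{x_m}^{\,r_m}\widehat y$ into it. (If this direct count resists, one can instead first invoke \thref{dk} for $\widehat A$ --- legitimate since $\widehat y\in\dk(\widehat A)\setminus k[\widehat{x_1},\dots,\widehat{x_m},\widehat z,\widehat t]$, after a harmless base change to an infinite field --- to reduce, after a coordinate change in $k[Z,T]$, to $f=a_0(Z)+a_1(Z)T$, and then to exhibit the required exponential map by hand in each of the few resulting cases, using the $T$-translation $\widehat t\mapsto\widehat t+X_1^{r_1}\cdots X_m^{r_m}U$, which fixes all $\widehat{x_i}$, in conjunction with $\widehat\phi$.) I expect this step --- placing one of the variables into the ring of invariants of an exponential map that also fixes $\widehat y$ --- to be the main obstacle.

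Granting $\widehat{x_l}\in\widehat A^{\widehat\phi}$, put $S=k[\widehat{x_l}]\setminus\{0\}\subseteq\widehat A^{\widehat\phi}\setminus\{0\}$ and $A_l=S^{-1}\widehat A$, with the presentation recorded above. By \thref{prope}(iii), $\widehat\phi$ induces a non-trivial exponential map $S^{-1}\widehat\phi$ on $A_l$ with $(A_l)^{S^{-1}\widehat\phi}=S^{-1}(\widehat A^{\widehat\phi})$; this ring contains $k(X_l)$ and contains $y'=\widehat y/1$, so $y'\in\dk(A_l)$. Finally $A_l$ is an affine domain over the field $k(X_l)$ of the shape \eqref{AG} in the $m-1$ variables $X_j$ $(j\neq l)$, so \thref{subdk}, applied with $k(X_l)$ in place of $k$, gives $k(X_l)[x_j'\,(j\neq l),z',t']\subseteq\dk(A_l)$; together with $y'\in\dk(A_l)$ this yields $\dk(A_l)\supseteq k(X_l)[x_j'\,(j\neq l),y',z',t']=A_l$, i.e.\ $\dk(A_l)=A_l$.
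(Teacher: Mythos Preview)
Your reduction via \thref{fdk1} and the localisation step at the end are both correct and match the paper. The gap is in the ``heart of the argument'': you assert that for some $l$ one has $\widehat{x_l}\in\widehat A^{\widehat\phi}$, but neither of your two suggested justifications works.

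For the direct route, you want $f(\widehat z,\widehat t)\in\widehat A^{\widehat\phi}$, which would force \emph{every} $\widehat{x_i}$ into $\widehat A^{\widehat\phi}$ by factorial closedness. But your ``weight count'' is not a proof. What one actually gets (and what the paper uses) is only the existence of a homogeneous $h_1\in\widehat A^{\widehat\phi}\setminus k[\widehat y,\widehat z,\widehat t]$; such an $h_1$ involves some $\widehat{x_i}$ in its monomials but need not be \emph{divisible} by any single $\widehat{x_l}$ in $\widehat A$ (think of $h_1=\widehat{x_1}\widehat y+\widehat{x_2}\widehat y$ when $r_1=r_2$), so factorial closedness does not yield $\widehat{x_l}\in\widehat A^{\widehat\phi}$. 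Your fallback is also broken: the $T$-translation $\widehat t\mapsto\widehat t+X_1^{r_1}\cdots X_m^{r_m}U$ fixes all $\widehat{x_i}$ but \emph{moves} $\widehat y$ whenever $\deg_T f>0$, so it cannot be combined with $\widehat\phi$ to produce a single exponential map fixing both some $\widehat{x_l}$ and $\widehat y$; and it is precisely this simultaneity that you need before localising.

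The paper does not try to place $\widehat{x_l}$ into $\widehat A^{\widehat\phi}$. Instead, starting from such an $h_1$, it chooses $l$ by a case analysis on the monomials of $h_1$ (Case~1: the $\widehat y$-free part $h'$ already has a monomial divisible by some $\widehat{x_l}$; Case~2: $h'\in k[\widehat z,\widehat t]$, so homogeneity forces every $\widehat y$-monomial $\widehat{x_1}^{i_1}\cdots\widehat{x_m}^{i_m}\widehat y^{j}\widehat z^{p}\widehat t^{q}$ to satisfy $i_1+\cdots+i_m=j(r_1+\cdots+r_m)$, whence some $i_l>jr_l$). It then applies a \emph{second} admissible filtration to $\widehat A$, with weight vector $(0,\ldots,0,1,0,\ldots,0)$ in the $l$-th slot, and passes to the associated graded ring $\overline A$ (again of the same shape by \thref{gr}). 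With this weight, the leading form $\overline{h_1}$ picks out the monomials of maximal $\widehat{x_l}$-exponent, and the case analysis guarantees $\overline{x_l}\mid\overline{h_1}$ in $\overline A$; now factorial closedness gives $\overline{x_l}\in\overline A^{\overline\phi}$, and since $\widehat y\in\widehat A^{\widehat\phi}$ one also has $\overline y\in\overline A^{\overline\phi}$. Localising $\overline A$ (not $\widehat A$) at $k[\overline{x_l}]\setminus\{0\}$ then finishes exactly as in your last paragraph. The missing idea in your proposal is this extra degeneration: it is what turns ``$h_1$ involves $\widehat{x_l}$'' into ``$\overline{h_1}$ is divisible by $\overline{x_l}$''.
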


\begin{proof}
	
	By \thref{fdk1}, there exists an integral domain 
	$$
	\widehat{A} \cong \frac{k[X_{1}, \ldots , X_{m}, Y,Z,T]}{(X_{1}^{r_{1}} \cdots X_{m}^{r_{m}}Y-f(Z,T))},
	$$
	and a non-trivial homogeneous exponential map $\widehat{\phi}$ on $\widehat{A}$ such that for every $i \in \{1,\ldots,m\}$, $wt(\widehat{x_{i}})=-1$, $wt(\widehat{z})=wt(\widehat{t})=0$, $wt(\widehat{y})=r_1+\cdots+r_m$ and $\widehat{y} \in \widehat{A}^{\widehat{\phi}}$, 
where $\widehat{x_{1}},\ldots, \widehat{x_{m}},\widehat{y},\widehat{z}, \widehat{t}$ 
denote the images of $X_1,\ldots,X_m,Y,Z,T$ respectively in $\widehat{A}$.
	
	Since $m >1$, $\td_{k}(\widehat{A}^{\widehat{\phi}} ) \geqslant 3$. Hence if $\widehat{A}^{\widehat{\phi}} \subseteq k[\widehat{y},\widehat{z},\widehat{t}]$, then $\widehat{A}^{\widehat{\phi}} =k[\widehat{y},\widehat{z},\widehat{t}]$.  But then $\widehat{x}_{1}^{r_{1}} \cdots \widehat{x}_{m}^{r_{m}}\widehat{y}= f(\widehat{z},\widehat{t}) \in \widehat{A}^{\widehat{\phi}} $, that means $\widehat{\phi}$ is trivial, as $\widehat{A}^{\widehat{\phi}}$ is factorially closed (cf. \thref{prope}(i)). This is a contradiction. 
	Therefore, there exists $h_{1} \in \widehat{A}^{\widehat{\phi}} \setminus k[\widehat{y}, \widehat{z}, \widehat{t}]$, which is homogeneous with respect to the grading on $\widehat{A}$. Let
	
	$$
	h_{1}= h^{\prime}(\widehat{x_{1}},\ldots,\widehat{x_{m}},\widehat{z},\widehat{t})+ \sum_{\overline{\iota}=(i_{1},\ldots,i_{m}) \in \mathbb{Z}_{\geqslant 0}^m,\, j>0\, p,q \geqslant 0} \lambda_{\overline{\iota}\, jpq}\, \widehat{x_{1}}^{i_{1}} \cdots \widehat{x_{m}}^{i_{m}} \widehat{y}^j \widehat{z}^p \widehat{t}^q
	$$
	such that $\lambda_{\overline{\iota}\, jpq} \in k$ and for every $j>0$ and $\overline{\iota}=(i_{1},\ldots,i_{m}) \in \mathbb{Z}_{\geqslant 0}^m$, there exists $s_{j} \in \{1,\ldots,m\}$ such that $i_{s_{j}} < r_{s_{j}}$. Now we have the following two cases:
	
	\smallskip
	\noindent
	{\it Case  }1:
	If $h^{\prime} \notin k[\widehat{z},\widehat{t}]$, then it has a monomial summand $h_2$ such that $\widehat{x_{l}}\, | \, h_2$, for some $l, 1 \leqslant l \leqslant m$.
	
	\smallskip
	\noindent
	{\it Case }2:
	If $h^{\prime} \in k[\widehat{z},\widehat{t}]$, then each of the monomial summands of the form $\lambda_{\overline{\iota}\, jpq} \widehat{x_{1}}^{i_{1}} \cdots \widehat{x_{m}}^{i_{m}} \widehat{y}^j \widehat{z}^p \widehat{t}^q$ of $h_{1}$ has degree zero. That means, $j(r_{1}+\cdots+r_{m})=i_{1}+\cdots+i_{m}$. Therefore, for every $j>0$ and $\overline{\iota} \in \mathbb{Z}_{\geqslant 0}^m$, there exists $l_{j} \in \{1,\ldots,m\}, l_{j} \neq s_{j}$ such that $i_{l_{j}}> jr_{l_{j}}$, as $i_{s_{j}}<r_{s_{j}}$. We choose one of these $l_{j}$'s and call it $l$.  
	
	We consider the $\mathbb{Z}$-filtration on $\widehat{A}$ with respect to $(0,\ldots,0,1,0,\ldots,0) \in \mathbb{Z}^m$, where the $l$-th entry is 1. Let $\overline{A}$ be the associated graded ring of $\widehat{A}$, and by \thref{gr},
	$$
	\overline{A} \cong \frac{k[X_{1}, \ldots , X_{m}, Y,Z,T]}{(X_{1}^{r_{1}} \cdots X_{m}^{r_{m}}Y-f(Z,T))}.
	$$
	For every $a \in \widehat{A}$, let $\overline{a}$ denote its image in $\overline{A}$. By \thref{dhm},
	$\widehat{\phi}$ induces a non-trivial exponential map $\overline{\phi}$ on the associated graded ring $\overline{A}$ such that $\overline{h_{1}} \in \overline{A}^{\overline{\phi}}$. From {\it Case }1 and {\it Case }2, it is clear that $\overline{h_{1}}$ is divisible by $\overline{x_{l}}$. Hence we get that $\overline{x_{l}} \in \overline{A}^{\overline{\phi}}$.  
	Therefore, by \thref{prope}(iii), $\overline{\phi}$ will induce a non-trivial exponential map $\phi_{l}$ on $A_{l}= \overline{A} \otimes_{k[\overline{x_{l}}]} k(\overline{x_{l}})$, where
	$$
	A_{l} \cong \frac{k(X_{l})[X_{1}, \ldots , X_{l-1}, X_{l+1}, \ldots , X_{m}, Y,Z,T]}{(X_{1}^{r_{1}} \cdots X_{m}^{r_{m}}Y-f(Z,T))}.
	$$ 
	Since $A_{l}^{\phi_{l}}= \overline{A}^{\overline{\phi}} \otimes_{k[\overline{x_{l}}]} k(\overline{x_{l}})$ 
and $\overline{y} \in \overline{A}^{\overline{\phi}}$, the image $y^{\prime}$ of $Y$ in $A_{l}$ is in $A_{l}^{\phi_{l}}$. Therefore, by \thref{subdk}, we get that $DK(A_{l})=A_{l}$.
\end{proof}

We now add two more equivalent statements to \thref{equiv}.
 \begin{thm}\thlabel{eq13}
	Let $A$ be the affine domain as in \eqref{AG} and $	F(X_{1}, \ldots, X_{m}, Z,T)= f(Z,T)+ (X_{1} \cdots X_{m})g$, for some $g \in k[X_1,\ldots,X_m,Z,T]$. Then the following statements are equivalent. 
	\begin{enumerate}
		\item[\rm(i)] $A$ is a UFD, $k[x_{1},\ldots,x_{m},z,t] \subsetneqq \dk(A)$ and  $\left( \frac{A}{x_{i}A}  \right)^{*}=k^{*}$, for every $i\in \{1,\ldots,m\}$.
		
		\item[\rm(ii)] $k[Z,T]=k[f]^{[1]}$.
		
		\item[\rm(iii)] $A=k[x_1,\ldots,x_m]^{[2]}$.

		\item[\rm(iv)] $A$ is a UFD, $\ml(A)=k$ and  $\left( \frac{A}{x_{i}A}  \right)^{*}=k^{*}$ for every $i \in \{1,\ldots,m\}$.
	\end{enumerate} 
\end{thm}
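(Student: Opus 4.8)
The plan is to leverage \thref{equiv}: statements (ii) and (iii) of the present theorem are precisely statements (v) and (iii) of \thref{equiv}, hence already equivalent. So it is enough to establish $\mathrm{(iii)}\Rightarrow\mathrm{(iv)}$, $\mathrm{(iv)}\Rightarrow\mathrm{(i)}$ and $\mathrm{(i)}\Rightarrow\mathrm{(ii)}$. Throughout I will use the identification $A/x_jA\cong k^{[m]}\otimes_k\bigl(k[Z,T]/(f)\bigr)$ for each $j$: since $F=f+(X_1\cdots X_m)g$, both $X_1^{r_1}\cdots X_m^{r_m}$ and $X_1\cdots X_m$ vanish modulo $X_j$, so $G\equiv-f(Z,T)$ modulo $X_j$ and $A/x_jA\cong k[X_1,\dots,\widehat{X_j},\dots,X_m,Y,Z,T]/(f(Z,T))$. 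In particular, whenever $f$ is irreducible (so that $k[Z,T]/(f)$ is a domain) one gets $\bigl(A/x_jA\bigr)^{*}=\bigl(k[Z,T]/(f)\bigr)^{*}$.

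For $\mathrm{(iii)}\Rightarrow\mathrm{(iv)}$ (and, by the same computation, $\mathrm{(iii)}\Rightarrow\mathrm{(i)}$): if $A=k[x_1,\dots,x_m]^{[2]}=k^{[m+2]}$ then $A$ is a UFD and $\ml(A)=k$ by Lemma~\ref{poly} (as $m+2\geqslant 2$); moreover $k[x_1,\dots,x_m,z,t]\subsetneqq\dk(A)$, since this is part of statement (viii) of \thref{equiv}, which is equivalent to (iii); and by $\mathrm{(iii)}\Leftrightarrow\mathrm{(v)}$ of \thref{equiv}, $f$ is a coordinate of $k[Z,T]$, so $k[Z,T]/(f)=k^{[1]}$ and hence $A/x_jA\cong k^{[m+1]}$, giving $\bigl(A/x_jA\bigr)^{*}=k^{*}$ for every $j$. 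This yields (i) and (iv). For $\mathrm{(iv)}\Rightarrow\mathrm{(i)}$ only the condition $k[x_1,\dots,x_m,z,t]\subsetneqq\dk(A)$ has to be produced: since $m\geqslant 1$ we have $\ml(A)=k\subsetneqq k[x_1,\dots,x_m]=E$, so the contrapositive of \thref{ml}(b) gives $\dk(A)\neq k[x_1,\dots,x_m,z,t]$, and together with \thref{subdk} this forces $k[x_1,\dots,x_m,z,t]\subsetneqq\dk(A)$.

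The crux is $\mathrm{(i)}\Rightarrow\mathrm{(ii)}$. Assume (i). From $\bigl(A/x_jA\bigr)^{*}=k^{*}$ we get $A/x_jA\neq 0$, so $x_j$ is not a unit; as $A$ is a UFD, \thref{ufd2} then forces $f$ to be irreducible in $k[Z,T]$, so $D:=k[Z,T]/(f)$ is a one-dimensional affine $k$-domain with $D^{*}=\bigl(A/x_jA\bigr)^{*}=k^{*}$. Suppose first $k$ is infinite. Since $k[x_1,\dots,x_m,z,t]\subsetneqq\dk(A)$, \thref{dk} provides a coordinate pair $Z_1,T_1$ of $k[Z,T]$ and $a_0,a_1\in k[Z_1]$ with $f=a_0(Z_1)+a_1(Z_1)T_1$. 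If $a_1=0$ then $f=a_0(Z_1)$ is irreducible, so $L:=k[Z_1]/(a_0)$ is a field, $D\cong L[T_1]$, and $L^{*}=D^{*}=k^{*}$ forces $L=k$, i.e.\ $\deg_{Z_1}a_0=1$. If $a_1\neq 0$ then irreducibility of $f$ gives $\gcd_{k[Z_1]}(a_0,a_1)=1$, so $D\cong k[Z_1,a_1(Z_1)^{-1}]$ with $D^{*}\cong k^{*}\times\mathbb{Z}^{s}$, where $s$ is the number of distinct irreducible factors of $a_1$; then $D^{*}=k^{*}$ forces $a_1\in k^{*}$. In either case $f$ is a coordinate of $k[Z,T]$, i.e.\ $k[Z,T]=k[f]^{[1]}$.

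If $k$ is finite, I would pass to $\overline{k}$ and $\overline{A}=A\otimes_k\overline{k}$; then $k[x_1,\dots,x_m,z,t]\subsetneqq\dk(A)$ yields $\overline{k}[x_1,\dots,x_m,z,t]\subsetneqq\dk(\overline{A})$ (as in the proof of \thref{r1}), so \thref{dk} over $\overline{k}$ gives $f=\overline{a}_0(W_1)+\overline{a}_1(W_1)W_2$ in some coordinate pair of $\overline{k}[Z,T]$. Since $f$ is irreducible over $k$ it is squarefree, and analysing its $\overline{k}$-irreducible factors --- which form a single $\mathrm{Gal}(\overline{k}/k)$-orbit, and whose associated quotient rings are either copies of $\overline{k}^{[1]}$ or localizations $\overline{k}[W_1,b(W_1)^{-1}]$ --- together with the constraint $D^{*}=k^{*}$ (relating $D^{*}$ to the $\overline{k}$-unit groups via Hilbert~90 for $\overline{k}^{*}$) should show that $f$ is a coordinate of $\overline{k}[Z,T]$; since $f$ is then a coordinate over a finite, hence separable, subextension of $\overline{k}/k$, \thref{sepco} gives $k[Z,T]=k[f]^{[1]}$. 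I expect the main obstacle to be precisely this step of $\mathrm{(i)}\Rightarrow\mathrm{(ii)}$ --- converting the abstract hypothesis $\bigl(A/x_jA\bigr)^{*}=k^{*}$ into the concrete conclusion that the one-variable polynomial $a_1$ is a unit (or $a_0$ is linear) --- and, within it, the bookkeeping required for the descent from $\overline{k}$ down to a finite base field. Everything else reduces quickly to \thref{equiv}, \thref{ml}(b), \thref{ufd2}, \thref{subdk} and \thref{dk}.
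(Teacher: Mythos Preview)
Your treatment of $\mathrm{(ii)}\Leftrightarrow\mathrm{(iii)}$, $\mathrm{(iii)}\Rightarrow\mathrm{(iv)}$ and $\mathrm{(iv)}\Rightarrow\mathrm{(i)}$ is correct and essentially the paper's. For $\mathrm{(i)}\Rightarrow\mathrm{(ii)}$ with $k$ infinite your argument is also correct---indeed more direct than the paper's, since you apply \thref{dk} straight to $A$ and read off $a_1\in k^{*}$ (or $\deg a_0=1$) from $D^{*}=k^{*}$. The genuine gap is the finite-field case. Your proposed passage to $\overline k$ followed by a Galois/Hilbert~90 analysis is not an argument: irreducibility of $f$ over $k$ does not guarantee squarefreeness over $\overline k$ in positive characteristic; the individual $\overline k$-irreducible factors of $f$ need not themselves be linear in your $W_2$; and there is no clean mechanism by which the single hypothesis $D^{*}=k^{*}$ controls the unit groups of the $\overline k$-components. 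You flag this step as the obstacle, and it is a real one.

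The paper avoids this difficulty altogether by never passing to $\overline k$. For $m=1$ it invokes \thref{r2}, which says \thref{dk} holds for $m=1$ over any field, so your infinite-field computation runs verbatim. For $m>1$ the key tool you are missing is \thref{fdk2} (prepared by \thref{fdk1}): from $B\subsetneqq\dk(A)$ one builds, via two successive graded degenerations, an $(m-1)$-variable ring
\[
A_l\;\cong\;\frac{k(X_l)[X_1,\ldots,\widehat{X_l},\ldots,X_m,Y,Z,T]}{\bigl(X_1^{r_1}\cdots X_m^{r_m}Y-f(Z,T)\bigr)}
\]
over the \emph{automatically infinite} field $k(X_l)$, with $\dk(A_l)=A_l$. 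One checks that $f$ stays irreducible in $k(X_l)[Z,T]$ and that $\bigl(A_l/\overline{x}_iA_l\bigr)^{*}=k(X_l)^{*}$ for $i\neq l$, then applies \thref{dk} over $k(X_l)$ and runs your $a_0,a_1$ analysis there to obtain $k(X_l)[Z,T]=k(X_l)[f]^{[1]}$. Since $k(X_l)/k$ is separable, \thref{sepco} descends this to $k[Z,T]=k[f]^{[1]}$. Replacing your $\overline k$-descent by this reduction to $k(X_l)$ closes the gap.
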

\begin{proof}
	$\rm (ii) \Leftrightarrow (iii)$ follows from \thref{equiv}. $\rm (iii) \Rightarrow (iv)$ holds trivially and
	$\rm (iv) \Rightarrow (i)$  follows by \thref{ml}(b). 
	
	\medskip
	\noindent
	$\rm (i) \Rightarrow (ii):$  We prove this by induction on $m$. We consider the case $m=1$. Since $k[x_1,z,t] \subsetneqq \dk(A)$, by \thref{r2}, without loss of generality we can assume that $f(Z,T)=a_0(Z)+a_1(Z)T$ for some $a_0,a_1 \in k^{[1]}$. Since $A$ is a UFD, either $f(Z,T)$ is irreducible or $f(Z,T) \in k^{*}$ (cf. \thref{ufd2}). If $f(Z,T) \in k^{*}$, then $x_1 \in A^{*}$, which contradicts that $\left(\frac{A}{x_1A}\right)^{*}=k^{*}$. Therefore, $f(Z,T)$ is irreducible in $k[Z,T]$. Note that $\frac{A}{x_1A} \cong  \frac{k[Y,Z,T]}{(f(Z,T))}$.
	If $a_1(Z)=0$, then $f(Z,T)=a_0(Z)$. Since $\left(\frac{A}{x_1A}\right)^{*}=\left( \frac{k[Y,Z,T]}{(a_0(Z))}\right)^{*}=k^{*}$, $a_0(Z)$ must be linear in $Z$. Hence $k[Z,T]=k[f]^{[1]}$. 
	If $a_1(Z) \neq 0$, then ${\rm gcd}(a_0,a_1)=1$, as $f(Z,T)$ is irreducible. Therefore, since $k^{*}=\left(\frac{A}{x_1A}\right)^{*}=\left( \frac{k[Y,Z,T]}{(f(Z,T))}\right)^{*}=\left(k\left[Y,Z,\frac{1}{a_1(Z)} \right] \right)^{*}$, $a_1(Z) \in k^{*}$. Thus, $k[Z,T]=k[f]^{[1]}$.
	
	We now assume $m>1$ and the result holds upto $m-1$. 
	Since $k[x_1,\ldots,x_m,z,t] \subsetneqq \dk(A)$, by \thref{fdk2}, there exists an integral domain 
	$$
	A_{l} \cong \frac{k(X_{l})[X_{1}, \ldots , X_{l-1}, X_{l+1}, \ldots , X_{m}, Y,Z,T]}{(X_{1}^{r_{1}} \cdots X_{m}^{r_{m}}Y-f(Z,T))},
	$$ 
	such that $\dk(A_l)=A_l$. Note that, for every $i \in \{1,\ldots,m\}$, 
$$
\frac{A}{x_iA} \cong \frac{k[X_1,\ldots,X_{i-1},X_{i+1},\ldots,X_{m},Y,Z,T]}{(f(Z,T))}.
$$
Now for every $i \neq l$, 
	$$
	\frac{A}{x_{i}A} \otimes_{k[x_l]} k(x_l) \cong \frac{k(X_l)[X_1,\ldots,X_{i-1},X_{i+1},\ldots,X_{l-1},X_{l+1},\ldots,X_m,Y,Z,T]}{(f(Z,T))} \cong \frac{A_l}{\overline{x_i}A_l},
	$$
	where $\overline{x_i}$ denotes the image of $X_i$ in $A_l$.
	Since $\left( \frac{A}{x_iA} \right)^{*}=k^{*}$, it follows that 
	$$
	\left( \frac{A_l}{\overline{x_i}A_l} \right)^{*} \cong k(X_l)^{*}.
	$$ Since $A$ is a UFD and $x_i$'s are not unit in $A$, by \thref{ufd2}, $f(Z,T)$ is irreducible in $k[Z,T]$. As $\dk(A_l)=A_l$, by \thref{dk}, there exist $a_0,a_1 \in k(X_l)^{[1]}$ such that  $$
	f(Z,T)=a_0(Z_1)+a_1(Z_1)T_1,
	$$ for some $Z_1,T_1 \in k(X_l)[Z,T]$ such that $k(X_l)[Z,T]=k(X_l)[Z_1,T_1]$.
	
	We fix some $i, i \neq l$. Suppose $a_1(Z_1)=0$. Since 
	$\left(\frac{A_l}{\overline{x_{i}}A_l}\right)^{*} \cong \left( \frac{k(X_l)[Z_1,T_1]}{(a_0(Z_1))} \right)^{*} \cong k(X_l)^{*}$, and $a_0(Z_1)$ is irreducible in $k(X_l)[Z_1]$, it follows that $a_0(Z_1)$ is linear in $Z_1$. Therefore, $f(Z,T)=a_{0}(Z_1)$ is a coordinate in $k(X_l)[Z,T]$.
	
	Suppose $a_1(Z_1) \neq 0 $. Since $f(Z,T)$ is irreducible in $k(X_l)[Z_1,T_1]$, it follows that ${\rm gcd}(a_0(Z_1),a_1(Z_1))=1$ and hence 
	$$
	\frac{A_l}{\overline{x_{i}}A_l} \cong \frac{k(X_l)[Z_1,T_1]}{(a_0(Z_1)+a_1(Z_1)T_1)} \cong k(X_l)\left[ Z_1, \frac{1}{a_1(Z_1)} \right].
	$$
	Therefore, $a_1(Z_1) \in \left(\frac{A_l}{\overline{x_i}A_l}\right)^{*} \cong k(X_l)^{*}$.  
	
	Hence we have $k(X_l)[Z,T]=k(X_l)[f]^{[1]}$. Therefore, by \thref{sepco}, we have $k[Z,T]=k[f]^{[1]}$. 
\end{proof}

\begin{rem}
	\em{ (i) The above result shows that the condition ``$A$ is geometrically factorial" (i.e., $A\otimes_{k} \overline{k}$ is a UFD, where $\overline{k}$ is an algebraic closure of $k$) in \cite[Theorem 3.11(viii)]{com} can be relaxed to ``$A$ is a UFD".

	(ii) In \cite[Theorem 4.5]{GG1}, it has been shown that if $A$ is as in Theorem \ref{eq13}, then the condition $A= k^{[m+2]}$ is equivalent to another condition:

	``$A$ is geometrically factorial over $k$ and there exists an exponential map $\phi$ on $A$ satisfying $k[x_1,\ldots, x_m] \subseteq A^{\phi} \not \subseteq k[x_1,\ldots, x_m,z,t]$.''}
\end{rem}

Below we show that Theorem B in section 1 has a generalisation over a larger class of integral domains. 
The $m=1$ case of the following theorem has been proved in \cite[Theorem 4.9]{DG}. 

\begin{thm}\thlabel{geqv}
	Let $R$ be a Noetherian integral domain such that either $\mathbb{Q}$ is contained in $R$ or $R$ is seminormal. Let
$$
A_R:= \frac{R\left[ X_{1}, \ldots , X_{m}, Y,Z,T\right]}{\left(X_{1}^{r_{1}} \cdots X_{m}^{r_{m}}Y- F(X_{1}, \ldots, X_{m}, Z,T) \right)},
~~r_{i} >1  \text{~for all~} i, 1 \leqslant i \leqslant m,
$$  
where $F(X_{1}, \ldots , X_{m},Z,T)=f(Z,T)+ (X_{1} \cdots X_{m})g(X_{1}, \ldots , X_{m},Z,T)$ 
and $f(Z,T) \neq 0$. Let $G=X_{1}^{r_{1}} \cdots X_{m}^{r_{m}}Y- F(X_{1}, \ldots, X_{m}, Z,T)$ 
and $\widetilde{x_{1}}, \ldots ,\widetilde{ x_{m}}$ denote the images in $A_R$ of $X_{1}, \ldots , X_{m}$
 respectively.  Then the following statements are equivalent:
\begin{enumerate}
\item [\rm(i)] $R[X_{1}, \ldots , X_{m},Y,Z,T]=R[X_{1}, \ldots, X_{m},G]^{[2]}$.

\item[\rm(ii)]  $R[X_{1}, \ldots , X_{m},Y,Z,T]=R[G]^{[m+2]}$.

\item[\rm(iii)] $A_R=R[\widetilde{x_{1}}, \ldots , \widetilde{x_{m}}]^{[2]}$.

\item[\rm(iv)] $A_R=R^{[m+2]}$.

\item[\rm(v)] $R[Z,T]=R[f(Z,T)]^{[1]}$.

\end{enumerate}
\end{thm}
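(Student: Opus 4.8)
The plan is to close the cycle $\rm(v)\Rightarrow(i)\Rightarrow(ii)\Rightarrow(iv)\Rightarrow(v)$ and to attach (iii) via $\rm(i)\Rightarrow(iii)\Rightarrow(iv)$; this is the same overall scheme by which \thref{equiv} is deduced over a field, except that the field case now does most of the work and \thref{bd} supplies the descent from residue fields to $R$. The implications among (i), (ii), (iii), (iv) should be purely formal. Writing $R[X_1,\ldots,X_m,Y,Z,T]=R[X_1,\ldots,X_m,G,w_1,w_2]$ for suitable $w_1,w_2$ (this is the meaning of (i)), a reordering of these $m+3$ polynomial generators over $R$ yields (ii), while reducing modulo the ideal $(G)$ yields $A_R=R[\widetilde{x_1},\ldots,\widetilde{x_m},\overline{w_1},\overline{w_2}]=R[\widetilde{x_1},\ldots,\widetilde{x_m}]^{[2]}$, which is (iii). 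Statement (ii) gives (iv) upon reducing modulo $(G)$, and (iii) gives (iv) once we observe that $R[\widetilde{x_1},\ldots,\widetilde{x_m}]=R^{[m]}$, which holds because any element of $R[X_1,\ldots,X_m]$ lying in $(G)$ must vanish: $G$ is linear in $Y$ with leading coefficient the non-zerodivisor $X_1^{r_1}\cdots X_m^{r_m}$ and $F$ involves no $Y$, so a nonzero multiple of $G$ has positive degree in $Y$.

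For $\rm(v)\Rightarrow(i)$ I would repeat verbatim the argument used for the corresponding implication in \thref{equiv}. Pick $h\in R[Z,T]$ with $R[Z,T]=R[f,h]$; then $\{-f,h\}$ is also a coordinate system of $R[Z,T]$ over $R$, so $R[X_1,\ldots,X_m,Y,Z,T]=R[X_1,\ldots,X_m,h]\,[-f,\,Y]$ and, after rewriting $Z,T$ in terms of $f,h$, we have $g\in R[X_1,\ldots,X_m,h]\,[-f]$. Thus $G=X_1^{r_1}\cdots X_m^{r_m}Y-X_1\cdots X_m\,g+(-f)$ is precisely of the form handled by \thref{rsco}, with base ring $R$, adjoined variable $h$ in the role of $T$, and $-f$ in the role of $Z$; hence $R[X_1,\ldots,X_m,Y,Z,T]=R[X_1,\ldots,X_m,h,G]^{[1]}$, and a transcendence-degree count (both $R[X_1,\ldots,X_m,G]$ and $R[X_1,\ldots,X_m,h,G]$ sit inside $R^{[m+3]}$) shows that $h$ may be moved past $G$, giving $R[X_1,\ldots,X_m,Y,Z,T]=R[X_1,\ldots,X_m,G]^{[2]}$, which is (i).

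The substantive implication is $\rm(iv)\Rightarrow(v)$, and the idea is to reduce to \thref{equiv} fibrewise and then apply \thref{bd}. Suppose $A_R=R^{[m+2]}$. Fix $\mathfrak p\in\Spec R$ and base change along $R\to k(\mathfrak p)$: the ring becomes $k(\mathfrak p)[X_1,\ldots,X_m,Y,Z,T]/(\overline G)$ with $\overline F=\overline f+(X_1\cdots X_m)\overline g$, and it equals $k(\mathfrak p)^{[m+2]}$, in particular a domain. The key point is that $\overline f\neq 0$ in $k(\mathfrak p)[Z,T]$: otherwise $\overline G=X_1\cdots X_m\bigl(X_1^{r_1-1}\cdots X_m^{r_m-1}Y-\overline g\bigr)$ would be a product of two non-units, contradicting integrality of the fibre. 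Hence the fibre is an algebra of the type governed by \thref{equiv} over the field $k(\mathfrak p)$ and satisfies statement (iv) of that theorem, so statement (v) there gives $k(\mathfrak p)[Z,T]=k(\mathfrak p)[\overline f\,]^{[1]}$; this forces $\overline f\notin k(\mathfrak p)$, so $R[f]\otimes_R k(\mathfrak p)\cong k(\mathfrak p)[\overline f\,]$ inside $k(\mathfrak p)[Z,T]$, and the displayed identity says exactly that $f$ is a residual coordinate of $R[Z,T]$. Since $R$ is Noetherian and either contains $\mathbb Q$ or is seminormal, \thref{bd} now yields $R[Z,T]=R[f]^{[1]}$, which is (v).

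I expect the only delicate point to be bookkeeping rather than a genuine difficulty: one must check that the fibre over $k(\mathfrak p)$ is literally an instance of the set-up of \thref{equiv} --- the same exponents $r_i>1$, the same product form $\overline F=\overline f+(X_1\cdots X_m)\overline g$, and $\overline f\neq 0$ --- so that the field-case theorem applies without modification, and one must match ``$R[f]^{[1]}$'' in (v) with the residual-coordinate hypothesis of \thref{bd} on the nose. Everything else --- the formal implications among (i)--(iv) and the Russell--Sathaye application in $\rm(v)\Rightarrow(i)$ --- is routine given \thref{rsco}, \thref{equiv} and \thref{bd}.
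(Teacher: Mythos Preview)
Your proposal is correct and follows essentially the same route as the paper's proof: the formal implications among (i)--(iv), the Russell--Sathaye argument via \thref{rsco} for $\rm(v)\Rightarrow(i)$, and the fibrewise reduction to \thref{equiv} followed by \thref{bd} for $\rm(iv)\Rightarrow(v)$. You are in fact more explicit than the paper on two points it leaves implicit---the verification that $\overline f\neq 0$ in each residue field (so that \thref{equiv} genuinely applies to the fibre) and the algebraic-independence check needed to pass from $R[X_1,\ldots,X_m,h,G]^{[1]}$ to $R[X_1,\ldots,X_m,G]^{[2]}$.
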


\begin{proof}
	Note that $\rm (i) \Rightarrow (ii) \Rightarrow (iv)$ and 
$\rm (i) \Rightarrow (iii) \Rightarrow (iv)$ follow trivially. 
Therefore it is enough to show $\rm (iv) \Rightarrow (v)$ and $\rm (v) \Rightarrow (i)$.
	
	\medskip
	
	\noindent
	$\rm (iv) \Rightarrow (v):$
Let $\p \in \Spec R$ and $\kappa(\p)= \frac{R_{\p}}{\p R_{\p}}$. Then $A \otimes_{R} \kappa(\p) =\kappa(\p)^{[m+2]}$. 
Now from $\rm (iv) \Rightarrow (v)$ of \thref{equiv}, we have $f$ is a residual coordinate in $R[Z,T]$. Hence $R[Z,T]=R[f]^{[1]}$ by \thref{bd}.

	\medskip
	
    \noindent
	$\rm (v) \Rightarrow (i):$
	Let $h \in R[Z,T]$ be such that $R[Z,T]=R[f,h]$. Therefore, without loss of generality we can assume that $f=Z$ and $h=T$. Hence $G=X_{1}^{r_{1}} \cdots X_{m}^{r_{m}}Y- X_{1} \cdots X_{m}g-Z$, for some $g \in R[X_{1},\ldots,X_{m},Z,T]$. Now by \thref{rsco}, we get that 
	$$
	R[X_{1},\ldots,X_{m},Y,Z,T]=R[X_{1},\ldots,X_{m},G,T]^{[1]}=R[X_{1},\ldots,X_{m},G]^{[2]}.
	$$ 
\end{proof}

\section{Isomorphism classes and Automorphisms}

In this section we will describe the isomorphism classes and automorphisms of integral domains of type $A$ (as in \eqref{AG}) when $\dk(A)=k[x_1,\ldots,x_m,z,t]$.

The following result describes some necessary conditions for two such rings to be isomorphic.

\begin{thm}\thlabel{isocl}
	Let $(r_1,\ldots,r_m),(s_1,\ldots,s_m) \in \mathbb{Z}^{m}_{>1}$,
	and $F, G \in k[X_1, \dots, X_m, Z,T]$,
	where $f(Z,T):= F(0,\dots,0,Z,T)\neq 0$ and $g(Z,T):= G(0,\dots,0,Z,T)\neq 0$. 
	Suppose $\phi: A \rightarrow A^{\prime}$ is an isomorphism, where
	$$
	A=A(r_1,\ldots,r_m,F):=
	\frac{k[X_1,\ldots,X_m,Y,Z,T]}{\left(X_1^{r_1}\cdots X_m^{r_m}Y-F(X_{1}, \ldots , X_{m},Z,T)\right)}
	$$
	and 
	$$
	A^{\prime}=A(s_1,\ldots,s_m,G):=
	\frac{k[X_1,\ldots,X_m,Y,Z,T]}{\left(X_1^{s_1}\cdots X_m^{s_m}Y-G(X_{1}, \ldots , X_{m},Z,T)\right)}.
	$$
	Let $x_1,\ldots,x_m,y,z,t$ and $x_1^{\prime},\ldots,x_m^{\prime},y^{\prime},z^{\prime},t^{\prime}$ 
	denote the images of $X_1,\ldots,X_m,Y,Z,T$ in $A$ and $A^{\prime}$ respectively. 
	Let $E=k[x_1,\ldots,x_m]$, $E^{\prime}=k[x_1^{\prime},\ldots,x_m^{\prime}]$, 
	$B=k[x_1,\ldots,x_m,z,t]$ and $B^{\prime}=k[x_1^{\prime},\ldots,x_m^{\prime},z^{\prime},t^{\prime}]$. Suppose 
	$ \dk(A)=B$ and  $\dk(A^{\prime})=B^{\prime}$.
	Then 
	\begin{enumerate}
		\item[\rm (i)] $\phi$ restricts to isomorphisms from $B$ to $B^{\prime}$ and from $E$ to $E^{\prime}$.
	
		\item[\rm (ii)] For each $i, 1 \leqslant i \leqslant m$, there exists $j$, $1 \leqslant j \leqslant m$, such that $\phi(x_{i})=\lambda_{j}x_{j}^{\prime}$ for some $\lambda_{j} \in k^{*}$ and $r_{i}=s_{j}$. In particular,
		$(r_1,\ldots,r_m)=(s_1,\ldots,s_m)$ upto a permutation of $\{1,\ldots,m\}$.
		
		\item[\rm (iii)] $\phi \left(( x_{1}^{r_{1}}\cdots x_{m}^{r_{m}}, F(x_{1}, \ldots, x_{m},z,t))B \right)
		=\left( (x_{1}^{\prime})^{s_{1}}\cdots (x_{m}^{\prime})^{s_{m}}, G(x_{1}^{\prime}, \ldots, x_{m}^{\prime},z^{\prime},t^{\prime}) \right)B^{\prime}$.
		
		\item[\rm(iv)] There exists $\alpha \in \Aut_k(k[Z,T])$ such that $\alpha(g)=\lambda f$ for some $\lambda \in k^{*}$.
	\end{enumerate}
 \end{thm}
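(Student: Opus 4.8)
The plan is to exploit three sources of rigidity — the Derksen invariant, the Makar--Limanov invariant, and the module of Kähler differentials $\Omega_{A/B}$ — each of which is intrinsic, and then to transfer the remaining data ($F$, $G$, and the monomial $x_1^{r_1}\cdots x_m^{r_m}$) through $\phi$ by divisibility computations in the unique factorization domains $B$ and $B'$. For part (i), I would use that a $k$-algebra isomorphism carries exponential maps bijectively to exponential maps and preserves non-triviality, so $\phi(\dk(A))=\dk(A')$ and $\phi(\ml(A))=\ml(A')$; since $\dk(A)=B$ this gives $\phi(B)=B'$, and since $\dk(A)=B$, $\dk(A')=B'$, \thref{ml}(b) yields $\ml(A)=E$, $\ml(A')=E'$, hence $\phi(E)=E'$. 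I would also record now, for use in (ii), that every unit of $A$ lies in each (factorially closed) ring of invariants, so $A^{*}\subseteq\ml(A)=E=k[x_1,\dots,x_m]$, whence $A^{*}=k^{*}$, and likewise $(A')^{*}=k^{*}$.

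\textbf{Part (ii).} Write $u=x_1\cdots x_m$, $u_1=x_1^{r_1}\cdots x_m^{r_m}$, and $u',u_1'$ for the analogous elements of $A'$; from the defining relation, $y=F(x_1,\dots,x_m,z,t)/u_1$ in $\operatorname{Frac}(A)=\operatorname{Frac}(B)$, so $A=B[y]\subseteq B[u^{-1}]$ with $A[u^{-1}]=B[u^{-1}]$, and similarly for $A'$. Then $A'[\phi(u)^{-1}]=\phi(B[u^{-1}])=B'[\phi(u)^{-1}]$, so $y'\in B'[\phi(u)^{-1}]$; since $x_j'\nmid G(x_1',\dots,x_m',z',t')$ for every $j$, the fraction $y'=G(x_1',\dots)/u_1'$ is in lowest terms in the UFD $B'$, and writing $y'=b\,\phi(u)^{-N}$ forces $u_1'\mid\phi(u)^{N}$, hence $x_j'\mid\phi(u)$ for all $j$, i.e. $u'\mid\phi(u)$. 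Running the same argument for $\phi^{-1}$ gives $\phi(u)\mid u'$, so $\phi(u)=\lambda u'$ with $\lambda\in k^{*}$; as $\phi|_E$ is an isomorphism of the UFD $E$ onto $E'$, unique factorisation of $\phi(x_1)\cdots\phi(x_m)=\lambda\,x_1'\cdots x_m'$ produces a permutation $\sigma$ of $\{1,\dots,m\}$ and scalars $\lambda_i\in k^{*}$ with $\phi(x_i)=\lambda_i x_{\sigma(i)}'$. For the exponents I would use that $\Omega_{A/B}\cong A/u_1A$ (immediate from $A=B[Y]/(u_1Y-F)$, since $d(u_1Y-F)=u_1\,dY$) and likewise $\Omega_{A'/B'}\cong A'/u_1'A'$; since $\phi$ identifies the $B$-algebra $A$ with the $B'$-algebra $A'$, it induces a $\phi$-semilinear isomorphism $A/u_1A\cong A'/u_1'A'$, so comparing annihilators gives $\phi(u_1A)=u_1'A'$. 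Because $(A')^{*}=k^{*}$, $\phi(u_1)$ and $u_1'$ differ by a scalar; substituting $\phi(x_i)=\lambda_i x_{\sigma(i)}'$ and comparing monomials in $B'$ then forces $r_i=s_{\sigma(i)}$ for all $i$, and in particular $(r_1,\dots,r_m)$ and $(s_1,\dots,s_m)$ agree up to a permutation.

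\textbf{Parts (iii) and (iv).} For (iii) I would first prove the contraction identity $u_1A\cap B=(u_1,F)B$: inside $\operatorname{Frac}(B)$ one has $u_1A=u_1B+FB+\sum_{k\ge 2}F^{k}u_1^{-(k-1)}B$, and an induction on $k$ using $\gcd(u_1,F)=1$ shows any element of $u_1A$ lying in $B$ already lies in $(u_1,F)B$; the same holds in $B'$. Then $\phi(F)=\phi(u_1)\phi(y)\in u_1'A'$ and $\phi(F)\in B'$, so $\phi(F)\in u_1'A'\cap B'=(u_1',G)B'$; since $\phi(u_1)$ is a scalar multiple of $u_1'$ (by (ii)), this gives $\phi((u_1,F)B)\subseteq(u_1',G)B'$, and applying the identical reasoning to $\phi^{-1}$ yields the reverse inclusion, proving (iii). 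For (iv), $\phi(x_i)=\lambda_i x_{\sigma(i)}'$ gives $\phi((x_1,\dots,x_m)B)=(x_1',\dots,x_m')B'$, so $\phi$ descends to a $k$-isomorphism $\bar\phi\colon B/(x_1,\dots,x_m)B\to B'/(x_1',\dots,x_m')B'$, i.e. $\bar\phi\colon k[z,t]\to k[z',t']$; reducing the ideal identity of (iii) modulo the $x$'s, where $u_1,u_1'\mapsto 0$, $F\mapsto f$, $G\mapsto g$, gives $\bar\phi\bigl((f)k[z,t]\bigr)=(g)k[z',t']$, hence $\bar\phi(f)=\mu g$ for some $\mu\in k^{*}$. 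Taking $\alpha=\bar\phi^{-1}\in\Aut_k(k[Z,T])$ (after identifying $k[z,t]$, $k[z',t']$ with $k[Z,T]$) gives $\alpha(g)=\mu^{-1}f$, which is (iv).

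\textbf{Main obstacle.} The crux is part (ii), and within it the equality $r_i=s_{\sigma(i)}$. The difficulty is that the subring $A=B[F/u_1]$ of $\operatorname{Frac}(B)$ \emph{a priori} only remembers the radical of $u_1$ (inverting $u_1$ inverts each $x_i$), so naive valuation-theoretic comparisons cannot distinguish the exponents; multiplying $y$ by elements of $B$ removes almost all of its pole along $x_i$. The insight that rescues the argument is to pass from $A$ to the intrinsic module $\Omega_{A/B}$, whose annihilator is \emph{exactly} $u_1A$, together with $A^{*}=k^{*}$, so that a principal ideal determines its monomial generator; after that, parts (iii) and (iv) are routine bookkeeping with divisibility and passage to quotients.
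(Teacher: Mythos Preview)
Your argument is correct, and for parts (i), (iii) and (iv) it tracks the paper's proof closely (invariance of $\dk$ and $\ml$ under isomorphism, the contraction identity $u_1A\cap B=(u_1,F)B$, and reduction modulo $(x_1,\dots,x_m)$). The genuine divergence is in part (ii), in the step that matches the exponents. The paper argues variable by variable: having shown $x_i=\lambda_j x_j'$, it computes the contraction $x_i^{r_i}A\cap B=(x_i^{r_i},F)B$, rewrites this in the primed coordinates as $((x_j')^{r_i},(x_j')^{r_i-s_j}G)B'$ when $r_i>s_j$, and derives the contradiction $F\in x_j'B$. You instead invoke the intrinsic $B$-module $\Omega_{A/B}\cong A/u_1A$; since $\phi$ intertwines the structure maps $B\hookrightarrow A$ and $B'\hookrightarrow A'$, it carries $\operatorname{Ann}_A\Omega_{A/B}=u_1A$ to $\operatorname{Ann}_{A'}\Omega_{A'/B'}=u_1'A'$, and together with $(A')^{*}=k^{*}$ this pins down the entire monomial $u_1$ up to a scalar in one stroke. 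Your route is more conceptual and yields $\phi(u_1)=\mu u_1'$ directly (which the paper only obtains after completing (ii)); the paper's route is more elementary in that it never leaves ideal-theoretic computations in $B$. One small remark: your sketch of $u_1A\cap B=(u_1,F)B$ via ``induction on $k$'' is correct but can be replaced by the one-line observation $A/u_1A\cong (B/(u_1,F))[\bar Y]$, whose kernel under $B\to A/u_1A$ is exactly $(u_1,F)B$.
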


\begin{proof}
	(i)	Since $\phi: A \rightarrow A^{\prime}$ is an isomorphism, $\phi$ restricts to an isomorphism of the Derksen invariant and the Makar-Limanov invariant. Therefore, 
	$\phi(B)=B^{\prime}$.  By \thref{ml}(b), $\ml(A)=E$ and $\ml(A^{\prime})=E^{\prime}$.
	Hence $\phi(E)=E^{\prime}$.

{\it We now identify $\phi(A)$ with $A$ and assume that $A^{\prime}=A$, $\phi$ is identity on $A$, $B^{\prime}=B$ and $E^{\prime}=E$.}
 
	\medskip
	
	\noindent
	(ii) 
	We first show that for every $i, 1 \leqslant i \leqslant m$, $x_i= \lambda_j x_j^{\prime}$, 
for some $j, 1\leqslant j \leqslant m$ and $\lambda_{j} \in k^*$.
We now have 
	$$
	y^{\prime} =\frac{G(x_1^{\prime},\ldots,x_m^{\prime},z^{\prime},t^{\prime})}
	{(x_1^{\prime})^{s_1}\cdots(x_m^{\prime})^{s_m} } \in A \setminus B.
	$$ 
	Since $A \hookrightarrow k[x_1^{\pm 1},\ldots,x_m^{\pm 1},z,t]$, there exists $n>0$ such that 
	$$
	(x_1\cdots x_m)^n y^{\prime} = \frac{(x_1\cdots x_m)^n G(x_1^{\prime},\ldots,x_m^{\prime},z^{\prime},t^{\prime})}
	{(x_1^{\prime})^{s_1}\cdots(x_m^{\prime})^{s_m}} \in B.
	$$
	Since for every $j \in \{1,\ldots,m\}$, $x_j^{\prime}$ is irreducible in $B$, and $x_j^{\prime}\nmid G$ in $B$,
	we have $x_j^{\prime}\mid (x_1\cdots x_m)^n$. Since $x_1, x_2, \dots, x_m$ are also irreducibles in $B$, we have
	\begin{equation}\label{x}
		x_i=\lambda_jx_j^{\prime},
	\end{equation}
	for some $i\in \{1,\ldots,m\}$ and $\lambda_{j} \in k^{*}$. 
	
	We now show that $r_i=s_j$. 
	Suppose $r_i>s_j$. 
	Consider the ideal 
	$$
	\mathfrak{a}_i := x_i^{r_i}A\cap B = \left( x_i^{r_i}, F(x_1,\ldots,x_m, z, t) \right)B.
	$$
	Again by \eqref{x}, 
	$$
	\mathfrak{a}_i= (x_j^{\prime} )^{r_i}A \cap B 
	= (x_j^{\prime} )^{r_i}A^{\prime} \cap B^{\prime}=
	 \left( (x_j^{\prime} )^{r_i}, (x_j^{\prime} )^{r_i-s_j}G(x_1^{\prime} ,\ldots,x_m^{\prime} ,z^{\prime} ,t^{\prime} ) \right)B^{\prime} \subseteq x_j^{\prime}B^{\prime},
	$$ 
	which implies that $F(x_1,\ldots,x_m,z,t) \in x_j^{\prime}  B^{\prime} = x_j^{\prime}B$. 
	But this is a contradiction. 
	Therefore, $r_i \leqslant s_j$. 
	By similar arguments, we have $s_j \leqslant r_i$. Hence $r_i=s_j$ and as $i\in \{1,\ldots,m\}$ is arbitrary, the assertion follows.
	
	\medskip
	\noindent
	(iii) We now show that
	$$
	\left( x_{1}^{r_{1}}\cdots x_{m}^{r_{m}}, F(x_{1}, \ldots, x_{m},z,t) \right)B
	=\left( (x_{1}^{\prime})^{s_{1}}\cdots (x_{m}^{\prime})^{s_{m}}, G(x_{1}^{\prime}, \ldots, x_{m}^{\prime},z^{\prime},t^{\prime}) \right)B.
	$$
	From (ii), it is clear that $x_{1}^{r_{1}}\cdots x_{m}^{r_{m}}= \mu (x_{1}^{\prime})^{s_{1}}\cdots (x_{m}^{\prime})^{s_{m}} $, for some $\mu \in k^*$. Since 
	$$
	(x_{1}^{r_{1}}\cdots x_{m}^{r_{m}})A \cap B= \left( x_{1}^{r_{1}}\cdots x_{m}^{r_{m}}, F(x_{1}, \ldots, x_{m},z,t) \right)B
	$$
	and 
	$$
	\left( (x_{1}^{\prime})^{s_{1}}\cdots (x_{m}^{\prime})^{s_{m}}\right)A \cap B=
	\left( (x_{1}^{\prime})^{s_{1}}\cdots (x_{m}^{\prime})^{s_{m}}, G(x_{1}^{\prime}, \ldots, x_{m}^{\prime},z^{\prime},t^{\prime}) \right)B,
	$$
	the result follows.
	
	\medskip
	\noindent
	(iv)  Since $\left( x_{1}^{r_{1}}\cdots x_{m}^{r_{m}}, F(x_{1}, \ldots, x_{m},z,t) \right)B
	=\left( (x_{1}^{\prime})^{s_{1}}\cdots (x_{m}^{\prime})^{s_{m}}, G(x_{1}^{\prime}, \ldots, x_{m}^{\prime},z^{\prime},t^{\prime}) \right)B$, 
from \eqref{x}, it follows that 
	\begin{equation}\label{g}
		g(z^{\prime},t^{\prime})= \lambda f(z,t)+ H(x_1,\ldots,x_m,z,t),
	\end{equation}
	for some $\lambda \in k^{*}$ and $H \in \left( x_1,\ldots,x_m\right)B$. 
Let $z^{\prime}=h_1(x_1,\ldots,x_m,z,t)$ and $t^{\prime}=h_2(x_1,\ldots,x_m,z,t)$. Then we have $k[z,t]=k[h_1(0,\ldots,0,z,t),h_2(0,\ldots,0,z,t)]$. Hence $\alpha: k[Z,T] \rightarrow k[Z,T]$ defined by $\alpha(Z)=h_1(0,\ldots,0,Z,T)$ and $\alpha(T)=h_2(0,\ldots,0,Z,T)$ gives an automorphism of $k[Z,T]$, and from \eqref{g}, it follows that $\alpha(g)=\lambda f$. 
\end{proof}

The next result characterises the automorphisms of $A$ when $\dk(A)= B=k[x_{1}, \ldots , x_{m},z,t]$.

\begin{thm}\thlabel{aut1}
	Let $A$ be the affine domain as in \eqref{AG}, where 
	$$
	F(X_{1}, \ldots , X_{m},Z,T)=f(Z,T)+ h(X_{1},\ldots,X_{m},Z,T),
	$$
	for some $h \in (X_1,\ldots,X_m) k[X_1,\ldots,X_m,Z,T]$. As before, $x_1,\ldots,x_m,y,z,t$ denote the images of $X_1,\ldots,X_m,Y,Z,T$ in $A$.
	Suppose $\dk(A)=B=k[x_1,\ldots,x_m,z,t]$. If $\phi \in \Aut_{k}(A)$, then the following hold:	
	\begin{enumerate}
		\item [\rm(a)]  $\phi$ restricts to an automorphism of $E=k[x_{1},\ldots,x_{m}]$ and $B=k[x_{1}, \ldots , x_{m},z,t]$.
	\item[\rm(b)] For each $i$, $1 \leqslant i \leqslant m$, there exists $j$, $1 \leqslant j \leqslant m$ such that $\phi(x_{i})=\lambda_{j}x_{j}$, where $\lambda_{j} \in k^{*}\,\,(1 \leqslant i,j \leqslant m)$ and $r_{i}=r_{j}$.
		
		\item[\rm(c)] $\phi(I)=I$, where $I=\left( x_{1}^{r_{1}}\cdots x_{m}^{r_{m}}, F(x_{1}, \ldots, x_{m},z,t) \right)k[x_{1}, \ldots , x_{m},z,t]$.  
		\end{enumerate}
		Conversely, if $\phi \in \End_k(A)$ satisfies the conditions \rm(a) and \rm(c), then (b) holds and $\phi \in \Aut_k(A)$.
\end{thm}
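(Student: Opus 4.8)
\textbf{Proof proposal for Theorem \ref{aut1}.}

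The plan is to extract what we need from Theorem \ref{isocl} and then supply the converse separately. For the ``forward'' direction, apply Theorem \ref{isocl} with $A'=A$, $(s_1,\dots,s_m)=(r_1,\dots,r_m)$ and $G=F$: an automorphism $\phi$ of $A$ is in particular an isomorphism $A\to A$, and since $\dk(A)=B$ by hypothesis, parts (i), (ii) and (iii) of Theorem \ref{isocl} apply verbatim. Part (i) of that theorem gives (a) here (that $\phi$ restricts to automorphisms of $B$ and of $E$), part (ii) gives (b) (the description $\phi(x_i)=\lambda_j x_j$ with $r_i=r_j$), and part (iii) gives (c) (that $\phi$ fixes the ideal $I$ of $B$ generated by $x_1^{r_1}\cdots x_m^{r_m}$ and $F(x_1,\dots,x_m,z,t)$). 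So the forward direction is essentially a citation of Theorem \ref{isocl}, modulo noting that the present hypothesis $h\in(X_1,\dots,X_m)k[X_1,\dots,X_m,Z,T]$ is a special case of the form allowed there (here $F(0,\dots,0,Z,T)=f(Z,T)\ne 0$ automatically, as needed for $\dk$-considerations and for Proposition \ref{ml}(b) to give $\ml(A)=E$).

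For the converse, suppose $\phi\in\End_k(A)$ satisfies (a) and (c). First I would argue that (a) forces (b). Since $\phi$ restricts to a $k$-algebra endomorphism of $E=k[x_1,\dots,x_m]$ which is an automorphism, and since (c) says $\phi$ preserves $I$, I want to recover that $\phi$ permutes the $x_i$ up to scalars with matching exponents. The ring $B/I\cong A/x_1^{r_1}\cdots x_m^{r_m}A$ (indeed $I=x_1^{r_1}\cdots x_m^{r_m}A\cap B$), and an automorphism of $E$ preserving $I$ must, after passing to the decomposition of $x_1^{r_1}\cdots x_m^{r_m}$ into irreducibles in the UFD $E$, send each $x_i$ to a scalar times some $x_j$; comparing multiplicities in the principal part of $I$ then yields $r_i=r_j$. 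This is the step I expect to need the most care: one must check that an automorphism of $E$ carrying the ideal $I\cap E$ (or the relevant primary components) to itself genuinely permutes the variables with their weights, using that $x_j\nmid F$ modulo the other $x_i$'s, i.e.\ that $f(Z,T)\ne 0$. Concretely, from (c) one gets $\phi(x_1^{r_1}\cdots x_m^{r_m})\in I$ and, looking modulo $(x_1,\dots,x_m)^2$ or localizing at each height-one prime $x_jE$, one extracts $x_i=\lambda_j x_j$ and then $r_i=s_j=r_j$ exactly as in the proof of Theorem \ref{isocl}(ii).

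Once (b) is established, I would show $\phi$ is surjective (hence an automorphism, as a surjective endomorphism of a Noetherian domain which is injective for dimension reasons, or directly by exhibiting a preimage of $y$). The element $y\in A$ satisfies $x_1^{r_1}\cdots x_m^{r_m}y=F(x_1,\dots,x_m,z,t)$. Applying $\phi$ and using (b), $\phi(x_1^{r_1}\cdots x_m^{r_m})=\mu\, x_1^{r_1}\cdots x_m^{r_m}$ for some $\mu\in k^*$ (the permutation preserves weights so the monomial is fixed up to scalar), while (a) and (c) give $\phi(F(x_1,\dots,x_m,z,t))\in I$, say $\phi(F(x_1,\dots,x_m,z,t))=\alpha\, x_1^{r_1}\cdots x_m^{r_m}+\beta\, F(x_1,\dots,x_m,z,t)$ with $\alpha\in B$, $\beta\in B$; in fact a degree/weight comparison forces $\beta$ to be a unit. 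Then
$$
\phi(y)=\frac{\phi(F(x_1,\dots,x_m,z,t))}{\phi(x_1^{r_1}\cdots x_m^{r_m})}
=\frac{\alpha\, x_1^{r_1}\cdots x_m^{r_m}+\beta\, F(x_1,\dots,x_m,z,t)}{\mu\, x_1^{r_1}\cdots x_m^{r_m}}
=\frac{\alpha}{\mu}+\frac{\beta}{\mu}\,y,
$$
which lies in $B[y]=A$ and, since $\beta/\mu$ is a unit of $B$ and $\alpha/\mu\in B=\phi(B)$, can be solved for $y$: $y=(\mu/\beta)(\phi(y)-\alpha/\mu)\in\phi(A)$. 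Combined with $\phi(B)=B$ from (a), this gives $A=B[y]\subseteq\phi(A)$, so $\phi$ is surjective; injectivity is automatic since a surjective $k$-endomorphism of the affine domain $A$ is an automorphism (equality of Krull dimensions), so $\phi\in\Aut_k(A)$. The main obstacle, as noted, is the bookkeeping in deducing (b) from (a)+(c) — in particular verifying that $\beta$ above is a unit of $B$, for which one uses that $F(x_1,\dots,x_m,z,t)$ has a nonzero constant term $f(z,t)$ in the $x_i$-adic sense, so the induced map on $B/(x_1,\dots,x_m)B=k[z,t]$ sends $f$ to $\beta|_{x_i=0}\cdot f$ up to the automorphism $\alpha$ of $k[Z,T]$ from Theorem \ref{isocl}(iv), forcing $\beta|_{x_i=0}\in k^*$ and hence $\beta\in B^*=k^*$.
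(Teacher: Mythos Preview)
Your forward direction and the derivation of (b) in the converse are in the right spirit and match the paper's approach (the paper gets (b) a bit more cleanly by observing $I\cap E=(x_1^{r_1}\cdots x_m^{r_m})E$, so $\phi$ fixes this principal ideal of $E$ and hence permutes the irreducibles $x_i$ up to scalars, with matching exponents).

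The surjectivity argument, however, has a genuine gap. You write $\phi(F)=\alpha\,x_1^{r_1}\cdots x_m^{r_m}+\beta F$ with $\alpha,\beta\in B$ and then claim that $\beta\in B^{*}=k^{*}$, reasoning that $\beta|_{x_i=0}\in k^{*}$. But $\beta|_{x_i=0}\in k^{*}$ does \emph{not} imply $\beta\in k^{*}$: for instance $\beta=1+x_1$ has $\beta|_{x_i=0}=1$ yet $\beta\notin B^{*}$. In general one only gets that $\beta$ is a unit in $B/(x_1^{r_1}\cdots x_m^{r_m})B$, which is strictly weaker. So the step ``solve $\phi(y)=\alpha/\mu+(\beta/\mu)y$ for $y$'' breaks down.

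The paper avoids this by reversing the roles of $F$ and $\phi(F)$. Since $\phi(I)=I$ and $\phi(x_1^{r_1}\cdots x_m^{r_m})=\lambda\,x_1^{r_1}\cdots x_m^{r_m}$, one has $F\in I=(x_1^{r_1}\cdots x_m^{r_m},\,\phi(F))B$, so $F=u\,x_1^{r_1}\cdots x_m^{r_m}+v\,\phi(F)$ for some $u,v\in B$. Dividing by $x_1^{r_1}\cdots x_m^{r_m}=\lambda^{-1}\phi(x_1^{r_1}\cdots x_m^{r_m})$ gives $y=u+\lambda v\,\phi(y)$; writing $u=\phi(\tilde u)$, $v=\phi(\tilde v)$ (possible since $\phi|_B$ is an automorphism) yields $y=\phi(\tilde u+\lambda\tilde v\,y)\in\phi(A)$ directly, with no unit hypothesis on $v$. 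Swapping the direction like this is exactly what makes the argument go through.
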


\begin{proof}
	 By \thref{ml}(b), $\ml(A)=E=k[x_1,\ldots,x_m]$. Now the statements (a), (b), (c) follow from \thref{isocl}(i), (ii), (iii) respectively.
	
	\medskip
	
	We now show the converse part. From (a) and (c),
	 it follows that $\phi(B)=B$, $\phi(E)=E$ and $\phi(I)=I$. 
	 Hence $\phi(I \cap E)=I \cap E=\left(x_{1}^{r_1}\cdots x_{m}^{r_{m}} \right)E$, and therefore,
	$$
	\phi(x_{1}^{r_1}\cdots x_{m}^{r_{m}}) = \lambda x_{1}^{r_1}\cdots x_{m}^{r_{m}},
	$$
	for some $\lambda \in k^*$. Fix $i \in \{1,\ldots,m\}$. 
Since $x_i$ and $\phi(x_{i})$ are irreducibles in $E$, $\phi(x_{i})=\lambda_{j}x_{j}$, for some $\lambda_{j} \in k^{*}$ and $j \in \{1,\ldots,m\}$ and hence $r_{i} = r_{j}$ and (b) follows.

	Since $\phi$ is an automorphism of $B$ and $A \subseteq B[(x_1\cdots x_m)^{-1}]$, $\phi$ is an injective endomorphism of 
$A$, by (b). Therefore, it is enough to show that $\phi$ is surjective. For this, it is enough to find a preimage of $y$ in $A$. Since $\phi(I)=I$, we have $F= x_{1}^{r_{1}} \cdots x_{m}^{r_{m}} u(x_{1}, \ldots, x_{m},z,t) + \phi(F) v(x_{1}, \ldots, x_{m},z,t) $, for some $u,v \in B$. Since
	$
	y= \frac{F(x_{1}, \ldots , x_{m},z,t)}{x_{1}^{r_{1}} \cdots x_{m}^{r_{m}}}
	$, using (b), 
	\begin{equation}\label{y}
		y=u(x_{1},\ldots ,x_{m},z,t) + \frac{\phi(F) v(x_{1}, \ldots, x_{m},z,t)}{\lambda^{-1} \phi(x_{1}^{r_{1}} \cdots x_{m}^{r_{m}})}.
	\end{equation}
	Since $\phi(B)=B$, there exist $\widetilde{u},\widetilde{v} \in B$ such that $\phi(\widetilde{u})=u$ and $\phi(\widetilde{v})=v$. And hence from \eqref{y}, we get that 
	$y= \phi(\widetilde{u}+\lambda y\widetilde{v})$ where $\widetilde{u}+\lambda y\widetilde{v} \in A$.
\end{proof} 

Let $(r_1,\ldots,r_m) \in \mathbb{Z}^{m}_{> 1}$ and $f(Z,T)$ be a non-trivial line in $k[Z,T]$. Let 
$$
A(r_1,\ldots,r_m,f):=\frac{k[X_1,\ldots,X_m,Y,Z,T]}{\left(X_1^{r_1}\cdots X_m^{r_m}Y-f(Z,T)\right)}.
$$
Our next result determines the isomorphism classes among the family of rings defined above.

\begin{thm}\thlabel{isocl1}
	Let $(r_1,\ldots,r_m),(s_1,\ldots,s_m) \in \mathbb{Z}^{m}_{>1}$, and $f,g \in k[Z,T]$ be non-trivial lines. Then $A(r_1,\ldots,r_m,f) \cong A(s_1,\ldots,s_m,g)$ if and only if $(r_1,\ldots,r_m)=(s_1,\ldots,s_m)$ upto a permutation of $\{1,\ldots,m\}$ and there exists $\alpha \in \Aut_k(k[Z,T])$ such that $\alpha(g)=\mu f$, for some $\mu \in k^{*}$.
\end{thm}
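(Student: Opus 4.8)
The plan is to deduce the forward implication from \thref{isocl} and to establish the converse by writing down an explicit isomorphism.

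For ``$\Rightarrow$'', suppose $\phi\colon A(r_1,\ldots,r_m,f)\to A(s_1,\ldots,s_m,g)$ is a $k$-algebra isomorphism. Both $A:=A(r_1,\ldots,r_m,f)$ and $A':=A(s_1,\ldots,s_m,g)$ are of the shape considered in \thref{isocl} (with $F:=f$ and $G:=g$, viewed in $k[X_1,\ldots,X_m,Z,T]$, so that the associated polynomials $F(0,\ldots,0,Z,T)$ and $G(0,\ldots,0,Z,T)$ of \thref{isocl} are the given nonzero lines $f$ and $g$). The remaining hypothesis of \thref{isocl} to check is that $\dk(A)=k[x_1,\ldots,x_m,z,t]$ and $\dk(A')=k[x_1',\ldots,x_m',z',t']$. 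Indeed, $k[x_1,\ldots,x_m,z,t]\subseteq\dk(A)$ always holds by \thref{subdk}; if this inclusion were strict, then since $f$ is a line (so $k[Z,T]/(f)=k^{[1]}$), \thref{r1} would force $k[Z,T]=k[f]^{[1]}$, contradicting that $f$ is a \emph{non-trivial} line. Hence $\dk(A)=k[x_1,\ldots,x_m,z,t]$, and similarly $\dk(A')=k[x_1',\ldots,x_m',z',t']$ as $g$ is a non-trivial line. Now \thref{isocl}(ii) gives $(r_1,\ldots,r_m)=(s_1,\ldots,s_m)$ up to a permutation of $\{1,\ldots,m\}$, and \thref{isocl}(iv) gives $\alpha\in\Aut_k(k[Z,T])$ with $\alpha(g)=\mu f$ for some $\mu\in k^{*}$.

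For ``$\Leftarrow$'', suppose $(s_1,\ldots,s_m)$ is a permutation of $(r_1,\ldots,r_m)$ and $\alpha\in\Aut_k(k[Z,T])$ satisfies $\alpha(g)=\mu f$ with $\mu\in k^{*}$. An appropriate permutation of the indeterminates $X_1,\ldots,X_m$ (fixing $Y,Z,T$) induces an isomorphism $A(r_1,\ldots,r_m,f)\cong A(s_1,\ldots,s_m,f)$, so it suffices to produce an isomorphism $A(s_1,\ldots,s_m,f)\cong A(s_1,\ldots,s_m,g)$. Let $\Psi$ be the automorphism of $k[X_1,\ldots,X_m,Y,Z,T]$ with $\Psi|_{k[Z,T]}=\alpha$, $\Psi(X_i)=X_i$ for all $i$, and $\Psi(Y)=\mu Y$. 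Then
$$
\Psi\bigl(X_1^{s_1}\cdots X_m^{s_m}Y-g(Z,T)\bigr)=\mu X_1^{s_1}\cdots X_m^{s_m}Y-\mu f(Z,T)=\mu\bigl(X_1^{s_1}\cdots X_m^{s_m}Y-f(Z,T)\bigr),
$$
so $\Psi$ carries the ideal $\bigl(X_1^{s_1}\cdots X_m^{s_m}Y-g\bigr)$ onto $\bigl(X_1^{s_1}\cdots X_m^{s_m}Y-f\bigr)$ and descends to the required isomorphism. Composing the two isomorphisms finishes the proof.

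I do not expect a genuine obstacle here once \thref{isocl} is available: the forward direction reduces, after the routine verification that $\dk(A)=k[x_1,\ldots,x_m,z,t]$, to quoting parts (ii) and (iv) of that theorem, while the converse is a one-line computation with an explicit automorphism. The only delicate point is that the hypothesis that $f$ and $g$ are \emph{non-trivial} lines is used, and used only, in the forward direction---precisely to force $\dk$ to equal $k[x_1,\ldots,x_m,z,t]$ (resp.\ $k[x_1',\ldots,x_m',z',t']$) so that \thref{isocl} applies; the converse holds verbatim for arbitrary nonzero $f,g\in k[Z,T]$.
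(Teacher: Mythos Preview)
Your proof is correct and follows essentially the same approach as the paper: for the forward direction you verify $\dk(A)=k[x_1,\ldots,x_m,z,t]$ via \thref{subdk} and \thref{r1} and then quote \thref{isocl}(ii),(iv), exactly as the paper does. For the converse the paper simply writes ``The converse is obvious,'' whereas you spell out the explicit permutation and the automorphism $\Psi$; this is extra detail, not a different method.
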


\begin{proof}
Suppose $A(r_1,\ldots,r_m,f) \cong A(s_1,\ldots,s_m,g)$ and let  
$x_1,\ldots,x_m,y,z,t$ and\\ $x_1^{\prime},\ldots,x_m^{\prime},y^{\prime},z^{\prime},t^{\prime}$
 denote the images of $X_1,\ldots,X_m,Y,Z,T$ in $A(r_1,\ldots,r_m,f)$ and\\ $A(s_1,\ldots,s_m,g)$ respectively.
As $f(Z,T),g(Z,T)$ are non-trivial lines in $k[Z,T]$, by 
\thref{subdk} and \thref{r1}, we have $\dk(A(r_1,\ldots,r_m,f))=k[x_1,\ldots,x_m,z,t]$ and $\dk(A(s_1,\ldots,s_m,g))=k[x_1^{\prime},\ldots,x_m^{\prime},z^{\prime},t^{\prime}]$. Hence the result follows from \rm(ii) and \rm(iv) of \thref{isocl}.

The converse is obvious.
\end{proof}

\begin{cor}\thlabel{niso}
	Let $k$ be a field of positive characteristic. For each $n \geqslant 3$, there exists an infinite family of pairwise non-isomophic rings $C$ of dimension $n$, which are counter examples to the Zariski Cancellation Problem in positive characteristic, i.e., which satisfy that $C^{[1]}=k^{[n+1]}$ but $C \neq k^{[n]}$.
\end{cor}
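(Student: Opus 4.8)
The plan is to exhibit, for each $n\geqslant 3$, one explicit infinite family of rings of the form studied in this paper and to separate them up to isomorphism using \thref{isocl1}. First I would fix $n\geqslant 3$, set $m:=n-2\geqslant 1$, and use the hypothesis that $k$ has positive characteristic $p$ to choose a non-trivial line $f\in k[Z,T]$ --- for instance $f(Z,T)=Z^{p^{e}}+T+T^{sp}$ with $p^{e}\nmid sp$ and $sp\nmid p^{e}$, as recalled in the introduction. For each integer $r\geqslant 2$ I would then put
$$
C^{(r)}:=\frac{k[X_{1},\ldots,X_{m},Y,Z,T]}{\bigl(X_{1}^{r}\cdots X_{m}^{r}\,Y-f(Z,T)\bigr)},
$$
an affine $k$-domain of Krull dimension $m+2=n$, where all $m$ exponents are taken equal to $r$.

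Next I would verify that each $C^{(r)}$ is a counterexample to the Zariski Cancellation Problem in dimension $n$. On the one hand, $\bigl(C^{(r)}\bigr)^{[1]}=k^{[m+3]}=k^{[n+1]}$ by property (a) of \cite[Theorem 3.7]{adv} quoted in the introduction. On the other hand, $C^{(r)}$ is a ring of the type occurring in \thref{equiv} with $g=0$ (so that $F=f(Z,T)$); the equivalence of statements (iv) and (v) there shows that $C^{(r)}=k^{[m+2]}$ would force $k[Z,T]=k[f]^{[1]}$, contradicting the choice of $f$ as a non-trivial line. Hence $C^{(r)}\neq k^{[n]}$.

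Finally, to see that the $C^{(r)}$ are pairwise non-isomorphic I would invoke \thref{isocl1}, which applies since $f$ is a non-trivial line: if $C^{(r)}\cong C^{(s)}$, then the constant $m$-tuples $(r,\ldots,r)$ and $(s,\ldots,s)$ must agree up to a permutation of $\{1,\ldots,m\}$, and since a permutation fixes a constant tuple this gives $r=s$. Thus $\{C^{(r)}\mid r\geqslant 2\}$ is an infinite family of pairwise non-isomorphic $n$-dimensional rings $C$ with $C^{[1]}=k^{[n+1]}$ and $C\neq k^{[n]}$; since $n\geqslant 3$ was arbitrary, the corollary follows. I do not expect a genuine obstacle: once \thref{isocl1} (itself resting on the identification $\dk(C^{(r)})=k[x_{1},\ldots,x_{m},z,t]$ via \thref{subdk} and \thref{r1}) and the stable triviality of \cite{adv} are available, the argument is bookkeeping; the only points needing a little care are that the Krull dimension is exactly $n$ and that the non-trivial-line hypothesis is genuinely used --- it is precisely what makes both $C^{(r)}\neq k^{[n]}$ and the rigidity in \thref{isocl1} hold.
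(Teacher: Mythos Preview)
Your argument is correct and follows essentially the same route as the paper: cite \cite[Theorem 3.7]{adv} for stable triviality and non-triviality (you split this, using \thref{equiv} for the latter, which is equivalent), and then invoke \thref{isocl1} to separate the rings. The only difference is cosmetic --- the paper considers the full family $\Omega=\{A(r_1,\ldots,r_m,f)\}$ and asserts that \thref{isocl1} yields infinitely many non-isomorphic members, whereas you explicitly pick the subfamily with constant exponent tuple $(r,\ldots,r)$ and fixed $f$, which makes the pairwise non-isomorphism immediate.
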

\begin{proof}
	Consider the family of rings
	$$
	\Omega:=\{A(r_1,\ldots,r_m,f) \mid (r_1,\ldots,r_m) \in \mathbb{Z}^m_{>1}, \, f(Z,T)\, \text{is a non-trivial line in $k[Z,T]$}\}.
	$$  
	For every $C \in \Omega$, we have 	$C^{[1]} = k^{[m+3]}$
but
$C\neq k^{[m+2]}$ (\cite{adv}, Theorem 3.7). By \thref{isocl1}, 
there exist infinitely many rings $C\in \Omega$ which are pairwise non-isomorphic. Taking $n=m+2$, we get the result.
 \end{proof}

\section*{Acknowledgements}
The authors thank Professor Amartya K. Dutta for asking Question 2, carefully going through the earlier draft and giving valuable suggestions. The first author acknowledges the Council of Scientific and Industrial Research (CSIR) for the Shyama Prasad Mukherjee fellowship (SPM-07/0093(0295)/2019-EMR-I). The second author acknowledges Department of Science and Technology (DST), India for their INDO-RUSS project (DST/INT/RUS/RSF/P-48/2021).

\end{document}